\numberwithin{equation}{section}
\setlist[itemize]{leftmargin=*, itemsep={2pt}}
\setlist[1]{labelindent=\parindent}
\setlist[1]{labelsep=0.5em}
\setlist[enumerate,1]{label={\upshape (\roman*)}, ref={\upshape (\roman*)}}
\definecolor{citation}{rgb}{0,.40,.80}
\definecolor{reference}{rgb}{.80,0,.40}
\theoremstyle{plain}
\newtheorem{thm}{Theorem}[section]
\newtheorem{lemma}[thm]{Lemma}
\newtheorem{prop}[thm]{Proposition}
\newtheorem{cor}[thm]{Corollary}
\newtheorem{conjecture}[thm]{Conjecture}
\theoremstyle{definition}
\newtheorem{remark}[thm]{Remark}
\newcommand{\Db}{\mathrm{D^b}}
\newcommand{\st}{\mid}
\newcommand{\llangle}{\left \langle}
\newcommand{\rrangle}{\right \rangle}
\newcommand{\bZ}{\mathbf{Z}}
\newcommand{\cA}{\mathcal{A}}
\newcommand{\cB}{\mathcal{B}}
\newcommand{\perf}{\mathrm{perf}}
\newcommand{\tX}{\widetilde{X}}
\newcommand{\tKu}{\widetilde{\Ku}}
\newcommand{\Ku}{\mathcal{K}u}
\newcommand{\rH}{\mathrm{H}}
\newcommand\doubleplus{{\ensuremath{\mathbin{+\mkern-10mu+}}}}
\newcommand{\cHom}{\mathcal{H}\!{\it om}}
\DeclareMathOperator{\Ext}{Ext}
\DeclareMathOperator{\Sym}{Sym}
\DeclarePairedDelimiter{\abs}{\lvert}{\rvert}
\DeclareMathOperator{\Pic}{Pic}
\DeclareMathOperator{\Hom}{Hom}
\newcommand{\sO}{\mathcal{O}}
\newcommand{\PP}{\mathbf{P}}
\def\imod#1{\allowbreak\mkern5mu({\operator@font mod}\,\,#1)}
\newcommand*{\coloneqq}{\mathrel{\rlap{%
           \raisebox{0.3ex}{$\m@th\cdot$}}%
           \raisebox{-0.3ex}{$\m@th\cdot$}}%
           =}
\newcommand{\eqqcolon}{=%
           \mathrel{\rlap{%
           \raisebox{0.3ex}{$\m@th\cdot$}}%
           \raisebox{-0.3ex}{$\m@th\cdot$}}}
\newcommand*\rel@kern[1]{\kern#1\dimexpr\macc@kerna}
\newcommand*\widebar[1]{%
  \begingroup
  \def\mathaccent##1##2{%
    \rel@kern{0.8}%
    \overline{\rel@kern{-0.8}\macc@nucleus\rel@kern{0.2}}%
    \rel@kern{-0.2}%
  }%
  \macc@depth\@ne
  \let\math@bgroup\@empty \let\math@egroup\macc@set@skewchar
  \mathsurround\z@ \frozen@everymath{\mathgroup\macc@group\relax}%
  \macc@set@skewchar\relax
  \let\mathaccentV\macc@nested@a
  \macc@nested@a\relax111{#1}%
  \endgroup
}
\newcommand*\circled[1]{\tikz[baseline=(char.base)]{
  \node[shape=circle,draw,inner sep=1pt,thick,scale=0.75] (char) {\textbf{#1}};}}
\newcommand{\mutation}{
  \begin{tikzpicture}[scale=0.95, every node/.style={scale=0.95}]
\matrix(m)[matrix of math nodes, column sep=0.5em, row sep=0.3em, ampersand replacement=\&]{
  \circled{0} \& (0,-1)      \& (0,0)     \& (0,1)     \& (0,2)     \& (1,0)     \& (1,1)     \& (1,2)     \& (1,3)     \\
  \circled{1} \& (-1,0)      \& (-1,1)    \& (0,-1)    \& (0,0)     \& (0,1)     \& (0,2)     \& (1,0)     \& (1,1)     \\
  \circled{2} \& (-1,0)      \& (0,-1)    \& (-1,1)    \& (0,0)     \& (0,1)     \& (1,0)     \& (0,2)     \& (1,1)     \\
  \circled{3} \& (0,-1)      \& i_*(0,-1) \& (0,0)     \& i_*(0,0)  \& i_*(1,0)  \& (0,1)     \& i_*(1,1)  \& (0,2)     \\
  \circled{4} \& i_*(0,-1)   \& (0,0)     \& i_*(0,0)  \& i_*(1,0)  \& (0,1)     \& i_*(1,1)  \& (0,2)     \& (2,1)     \\
  \circled{5} \& i_*(0,-1)   \& i_*(0,0)  \& (-1,1)    \& i_*(1,0)  \& (0,1)     \& (1,1)     \& i_*(1,1)  \& (2,1)     \\
  \circled{6} \& i_*(0,-1)   \& i_*(0,0)  \& i_*(1,0)  \& (-1,1)    \& (0,1)     \& (1,1)     \& (2,1)     \& i_*(1,1)  \\
  \circled{7} \& i_*(-1,-1)  \& i_*(0,-1) \& i_*(0,0)  \& i_*(1,0)  \& (-1,1)    \& (0,1)     \& (1,1)     \& (2,1)     \\
  \circled{8} \& i_*(0,-2)   \& i_*(1,-2) \& i_*(1,-1) \& i_*(2,-1) \& (0,0)     \& (1,0)     \& (2,0)     \& (3,0)     \\
};

\draw[->, shorten <=2mm] (m-1-8.south west) -- (m-1-9.south east);
\draw[->, shorten <=2mm] (m-4-2.south east) -- (m-4-2.south west);
\draw[->, shorten <=2mm] (m-7-9.south west) -- (m-7-9.south east);
\path[<->]
  (m-2-4) edge (m-2-3)
  (m-2-8) edge (m-2-7)
  (m-6-5) edge (m-6-4)
  (m-6-8) edge (m-6-9);

\path[<-]
  (m-3-3) edge (m-3-2)
  (m-3-5) edge (m-3-4);
\path[->]
  (m-3-7) edge (m-3-6)
  (m-3-9) edge (m-3-8);
\path[<-]
  (m-5-7) edge (m-5-8)
  (m-5-4) edge (m-5-3);
\end{tikzpicture}
}
\title{Derived categories of quartic double fivefolds}
\author{Raymond Cheng}
\address{Institute of Algebraic Geometry \\
  Leibniz University Hannover \\
  30167 Hannover \\
  Germany
}
\email{cheng@math.uni-hannover.de}
\urladdr{https://chngr.github.io/}
\author{Alexander Perry}
\address{Department of Mathematics \\ University of Michigan \\ Ann Arbor, MI 48109}
\email{arper@umich.edu}
\urladdr{http://www-personal.umich.edu/~arper/}
\author{Xiaolei Zhao}
\address{Department of Mathematics \\
University of California \\
Santa Barbara, CA 93106, USA
}
\email{xlzhao@math.ucsb.edu}
\urladdr{https://sites.google.com/site/xiaoleizhaoswebsite/}
\begin{document}

\begin{abstract}
We construct singular quartic double fivefolds whose Kuznetsov component admits
a crepant categorical resolution of singularities by a twisted Calabi--Yau
threefold. We also construct rational specializations of these fivefolds where
such a resolution exists without a twist. This confirms an instance of a
higher-dimensional version of Kuznetsov's rationality conjecture, and of a
noncommutative version of Reid's fantasy on the connectedness of the moduli of
Calabi--Yau threefolds.
\end{abstract}
\maketitle

\section{Introduction}
We work over an algebraically closed field $k$ of characteristic $0$.
Let $X$ be a smooth prime Fano variety of index $r$, i.e. $\Pic(X) \cong \bZ$ is generated by an ample line bundle $\sO_X(1)$ such that $\omega_X \cong \sO_X(-r)$.
Then the bounded derived category of coherent sheaves on $X$ admits a semiorthogonal
decomposition
\begin{equation}
\label{DbY}
\Db(X) = \llangle
\Ku(X), \sO_X, \sO_X(1), \dots, \sO_X(r-1)
\rrangle
\end{equation}
where $\Ku(X) \subset \Db(X)$ is the \emph{Kuznetsov component}, defined explicitly as the full subcategory
\begin{equation*}
\Ku(X) = \set{ F \in \Db(X) \st \Ext^\bullet(\sO_X(i), F) = 0 \text{ for } 0 \leq i \leq r-1 } .
\end{equation*}
The category $\Ku(X)$ should be thought of as the ``interesting part'' of $\Db(X)$ obtained as the orthogonal to some tautological pieces coming from the polarization of $X$.

\begin{remark}
In some cases, $\Ku(X)$
can be refined by taking the orthogonal to some additional tautological objects on $X$
(see \cite{Kuz:survey} for examples), but in this paper we will only be concerned with $\Ku(X)$ as defined above.
\end{remark}

Kuznetsov components have recently been very influential in algebraic geometry, due in part to their close connections to birational geometry.
The most famous example is when
$X \subset \PP^5$ is a cubic fourfold, in which case
Kuznetsov \cite{Kuz:cubic} conjectured that $X$ is rational if and only if
$\Ku(X)$ is equivalent to the derived category of a K3 surface.
This has been verified for all known rational cubic fourfolds, and there is now
a precise Hodge-theoretic characterization of when $\Ku(X)$ is equivalent to the derived category of a K3 surface \cite{addington-thomas, stability-families}, but in general 
the conjecture remains tantalizingly open.

The heuristics behind Kuznetsov's rationality conjecture suggest more generally that if $\Ku(X)$ is a Calabi--Yau category of dimension $\dim(X)-2$ and $X$ is rational,
then $\Ku(X)$ is equivalent to the derived category of a smooth projective Calabi--Yau variety
(see \cite{kuznetsov-rationality}).
Most work on this problem has been confined to the case $\dim(X)  = 4$.
The purpose of this paper is to investigate an interesting $5$-dimensional example.

Namely, we take $X$ to be a quartic double fivefold, i.e.\ a double cover $X \to \PP^5$ branched along a quartic hypersurface, 
whose Kuznetsov component is defined by the semiorthogonal decomposition 
\begin{equation}
\label{KuX}
\Db(X) = \llangle \Ku(X), \sO_X, \sO_X(1), \sO_X(2), \sO_X(3) \rrangle.
\end{equation}
By \cite[Corollary 4.6]{kuznetsov-CY} the category $\Ku(X)$ is Calabi--Yau of dimension $3$.  
Therefore, Kuznetsov's philosophy suggests the following.

\begin{conjecture}
\label{conjecture-DbX}
If $X$ is a smooth quartic double fivefold which is rational, then there exists an equivalence $\Ku(X) \simeq \Db(W)$ for a smooth projective Calabi--Yau threefold $W$.
\end{conjecture}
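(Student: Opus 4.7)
The plan is to adapt the strategy that has proven most fruitful for Kuznetsov's rationality conjecture in lower dimensions, particularly for cubic fourfolds: use an explicit realization of the rationality of $X$ as a chain of blow-ups and blow-downs, then track the resulting derived categories via Orlov's blow-up formula and a sequence of mutations until an equivalence $\Ku(X) \simeq \Db(W)$ emerges, with $W$ identified as one of the blow-up centres. Since $\Ku(X)$ is Calabi--Yau of dimension $3$ as noted above, any such $W$ is automatically a Calabi--Yau threefold, so the content of the conjecture is really the existence of $W$, not the type of variety it turns out to be.

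Concretely, I would proceed in four steps. \emph{Step 1:} Enumerate the natural geometric strata of the moduli of quartic double fivefolds that could support a rationality construction, namely those consisting of $X$ containing distinguished subvarieties (linear spaces, quadric surfaces, small-degree scrolls) whose presence induces a birational map $X \dashrightarrow \PP^5$. \emph{Step 2:} Given such a stratum, resolve the birational map by an explicit diagram $X \leftarrow \tX \to \PP^5$ of blow-ups. \emph{Step 3:} Using Orlov's blow-up formula, obtain two semiorthogonal decompositions of $\Db(\tX)$: one refining \eqref{KuX}, and one refining Beilinson's decomposition of $\Db(\PP^5)$. \emph{Step 4:} Find a sequence of mutations equating the two decompositions, so that the summand $\Ku(X)$ on the first side is transported to a summand $\Db(W)$ on the second, with $W$ one of the blow-up centres from Step 2.

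The hardest part by far is Step 1: no smooth rational quartic double fivefold is currently known, so the conjecture is entirely aspirational in the smooth category, mirroring the situation for cubic fourfolds where every known rational example satisfies the analogous conjecture but the existence of any irrational smooth cubic remains open. The present paper sidesteps this obstacle by allowing $X$ to be mildly singular: in the singular setting, explicit rational examples can be constructed, but $\Ku(X)$ is itself a singular category and admits only a \emph{crepant categorical resolution} by $\Db(W,\alpha)$ for a possibly twisted Calabi--Yau threefold $(W,\alpha)$. A secondary difficulty, which would persist even once Step 1 becomes available, is that \eqref{KuX} has four line-bundle summands (versus three for a cubic fourfold), so the combinatorics of mutations needed in Step 4 is noticeably more involved than in the four-dimensional case.
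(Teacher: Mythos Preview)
The statement you are asked to ``prove'' is a \emph{conjecture}, and the paper does not attempt to prove it. More importantly, the paper explains immediately after stating Conjecture~\ref{conjecture-DbX} that a Hochschild homology computation shows $\Ku(X)$ is \emph{not} equivalent to $\Db(W)$ for any smooth projective variety $W$, for every smooth quartic double fivefold $X$. Consequently the conjecture is logically equivalent to the assertion that no smooth quartic double fivefold is rational; its content is a prediction of \emph{irrationality}, not the existence of a Calabi--Yau partner.

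Your strategy therefore has the logic inverted. Steps~2--4 of your plan aim to produce, from an explicit rationality construction, a threefold $W$ with $\Ku(X)\simeq\Db(W)$. But the Hochschild homology obstruction rules out any such $W$ a priori, so Step~4 must fail regardless of how clever the mutations are. In fact, were you to succeed at Step~1 and exhibit a smooth rational quartic double fivefold, you would have \emph{disproved} Conjecture~\ref{conjecture-DbX}, not proved it. The analogy with cubic fourfolds is misleading here: for cubic fourfolds the Hochschild homology of $\Ku(X)$ matches that of a K3 surface, so geometric K3 partners genuinely can exist; for quartic double fivefolds the numerics already exclude any commutative partner. This is precisely why the paper passes to singular $X$ and to crepant categorical resolutions rather than honest equivalences.
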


This conjecture appears to be quite difficult.
In fact, it follows from a Hochschild homology computation that $\Ku(X)$ is not equivalent to the derived category of any smooth projective variety (see \cite[Lemma 6.9]{HH}), 
so the conjecture is equivalent to the irrationality of every smooth quartic double fivefold.
This remains open despite many recent breakthroughs on the rationality problem. 
In fact, at the moment no quartic double fivefold is known to be irrational, and with the current techniques the best one could hope to prove is irrationality in the very general case.

In this paper, we instead investigate the situation for certain singular
quartic double fivefolds $X$, which are more tractable. 
In this situation, the subcategory $\Ku(X) \subset \Db(X)$ may be defined by the same semiorthogonal 
decomposition~\eqref{KuX} as in the smooth case. 
Our first main result is as follows. 

\begin{thm}
\label{main-theorem}
Let \(X \to \PP^5\) be a double cover branched along a quartic hypersurface
\(Y \subset \PP^5\) which is singular along a line \(L \subset \PP^5\).
For general such $Y$, there is a crepant categorical resolution of
singularities of \(\Ku(X)\) by \(\Db(W^+,\cA^+)\), where \(W^+\) is a proper
\(3\)-dimensional Calabi--Yau algebraic space which is not projective, and
\(\cA^+\) is an Azumaya algebra on \(W^+\) whose Brauer class is nontrivial.
\end{thm}

\begin{remark}
The notion of a crepant categorical resolution\footnote{To be precise, we use the term ``crepant categorical resolution'' for what is called a ``weakly crepant categorical resolution'' in \cite{Kuz:cat-res}.}
is due to Kuznetsov \cite{Kuz:cat-res} and abstracts the properties of the derived category of a crepant resolution of singularities of a variety with rational singularities.
\end{remark}

The proof of Theorem~\ref{main-theorem} is based on a study of the derived category of a natural resolution of singularities \( \tX \to X\).
Roughly, we define a Kuznetsov component $\tKu(X) \subset \Db(\tX)$
which is a crepant categorical resolution of $\Ku(X)$, and use the quadric bundle structure on $\tX$ induced by linear projection from $L$ to identify $\tKu(X)$ with the derived category of an associated pair $(W^+, \cA^+)$.

We were originally motivated by a version of Conjecture~\ref{conjecture-DbX} allowing singularities, which for rational $X$ asks for a crepant resolution of $\Ku(X)$ by a Calabi--Yau threefold, instead of a derived equivalence with one (which is impossible when $X$ is singular). 
However, we do not know whether $X$ in Theorem~\ref{main-theorem} is rational, and there are some indications that it is not. 
First, the nonvanishing of the Brauer class of \(\mathcal{A}^{+}\) is equivalent to the nonexistence of a rational section for the quadric bundle $\tX$ mentioned above (Lemma~\ref{general-rationality}). 
Second, the construction produces a \emph{non-projective} Calabi--Yau threefold $W^{+}$, in contrast to what is expected from Conjecture~\ref{conjecture-DbX}. 


To produce a sharper example, we specialize further to a situation where the quadric bundle \(\widetilde{X}\) 
admits a section. 
Our second main result is as follows. 

\begin{thm}
\label{main-theorem-special}
Let \(X \to \PP^5\) be a double cover branched along a quartic hypersurface
\(Y \subset \PP^5\). 
Assume that $Y$ is singular along a line $L \subset \PP^5$ and that there exists 
a \(3\)-plane \(P \subset \PP^5\) complementary to \(L\) which is tangent to
\(Y\) along a smooth quadric surface.
\begin{enumerate}
\item\label{main-theorem-special.resolution}
For general such \(Y\), there is a crepant categorical resolution of
singularities of \(\Ku(X)\) by \(\Db(W^\doubleplus)\), where \(W^\doubleplus\)
is a projective Calabi--Yau threefold.
\item\label{main-theorem-special.rational}
In the situation of \ref{main-theorem-special.resolution}, \(X\) is rational.
\end{enumerate}
\end{thm}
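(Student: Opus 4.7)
The plan is to deduce Theorem \ref{main-theorem-special} by specializing the construction underlying Theorem \ref{main-theorem}. The key geometric input is that a $3$-plane $P \subset \PP^5$ tangent to $Y$ along a smooth quadric surface $Q$ supplies canonical sections of the quadric surface bundle $\tX \to \PP^3$ induced by linear projection from $L$. Such sections will both trivialize the Brauer class of $\cA^+$ and allow one to pass to a projective birational model $W^{\doubleplus}$ of the variety of rulings.

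First, I would import the setup of Theorem \ref{main-theorem}: the resolution $\tX \to X$ with its quadric surface bundle structure $\tX \to \PP^3$, and a Kuznetsov component $\tKu(X) \subset \Db(\tX)$ which is a crepant categorical resolution of $\Ku(X)$, identified with $\Db(W^+, \cA^+)$ via Kuznetsov's theory of derived categories of quadric fibrations. Here $W^+ \to \PP^3$ is the double cover of the base associated to the spectrum of the center of the even Clifford algebra, and $\cA^+$ descends from this algebra.

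Next, I would extract sections from the geometric hypothesis. Because $P$ is complementary to $L$ and tangent to $Y$ along $Q$, one has $P \cap Y = 2Q$ as divisors in $P \cong \PP^3$, so the restriction to $P$ of the quartic section defining $X \to \PP^5$ is a perfect square. Hence the double cover $X \times_{\PP^5} P \to P$ splits into two copies of $P$ glued along $Q$, and because $P \cap L = \emptyset$ these copies lift to two sections $\sigma^{\pm} : \PP^3 \to \tX$ of the quadric surface bundle. Each $\sigma^{\pm}$ provides a fibrewise isotropic vector, and such a vector trivializes the Brauer class of the even Clifford algebra over $W^+$; thus $\cA^+ \simeq \mathrm{End}(\mathcal{F})$ for some rank-$2$ bundle $\mathcal{F}$ on $W^+$, and Morita equivalence yields $\Db(W^+, \cA^+) \simeq \Db(W^+)$. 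To replace the non-projective algebraic space $W^+$ by a projective Calabi--Yau threefold $W^{\doubleplus}$, I would analyze how the tangency specialization modifies the discriminant of the quadric bundle: the extra structure forced by $Q$ should pin down a globally projective small resolution of $W^+$, obtained by a sequence of flops controlled by the section; since flops are crepant and preserve the Calabi--Yau condition, $W^{\doubleplus}$ still gives a crepant categorical resolution of $\Ku(X)$, proving \ref{main-theorem-special.resolution}. For \ref{main-theorem-special.rational}, fibrewise projection from $\sigma^+$ is birational from each smooth quadric surface onto $\PP^2$, so it globalizes to a birational map from $\tX$ onto a $\PP^2$-bundle over $\PP^3$, which is rational; since $X$ is birational to $\tX$, it is rational.

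The main obstacle will be the construction of the projective $W^{\doubleplus}$ together with the verification that the induced categorical resolution of $\Ku(X)$ by $\Db(W^{\doubleplus})$ remains crepant. The trivialization of the Brauer class is automatic from the presence of a section, but the birational modification from $W^+$ to $W^{\doubleplus}$ requires a concrete study of the discriminant locus and its small resolutions under the specialization. This is precisely where the tangency assumption does its real work; by contrast, rationality is a nearly immediate consequence of the existence of a single section.
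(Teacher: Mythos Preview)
Your rationality argument for \ref{main-theorem-special.rational} matches the paper's. For \ref{main-theorem-special.resolution}, however, there is a gap earlier than the one you flag. You propose to import Theorem~\ref{main-theorem} wholesale, but the tangency hypothesis forces $Y$ out of the general locus in $\mathfrak{a}$: by Lemma~\ref{quartic-singularities}\ref{quartic-singularities.P}, a general $Y \in \mathfrak{b}$ produces a $\widetilde{X}$ with $18$ ordinary double points. Consequently $\tKu(X)$ is no longer a crepant categorical resolution of $\Ku(X)$ (since $\widetilde{X}$ is singular), the double cover $W$ acquires $18$ extra nodes beyond the $72$ coming from corank-$2$ fibres, and the pair $(W^+,\cA^+)$ of Lemma~\ref{general-rationality}---which is only constructed for quadric bundles with smooth total space---is not available in the form you assume. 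So your starting identification ``$\tKu(X) \simeq \Db(W^+,\cA^+)$ with $W^+$ a smooth Calabi--Yau threefold'' fails here, and the subsequent ``trivialize the Brauer class and flop to a projective model'' plan has no well-defined object to act on.

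The paper's fix exploits a feature of the sections you correctly construct: each $\sigma^{\pm}$ passes through \emph{every} node of $\widetilde{X}$ (Lemma~\ref{quartics-sections-mostly-smooth}), so blowing up $\widetilde{X}$ along the image of the section resolves all $18$ nodes and yields a smooth $\widehat{X}$. One then builds a crepant categorical resolution $\widehat{\Ku}(X)$ of $\Ku(X)$ inside $\Db(\widehat{X})$, and constructs $W^{\doubleplus}$ directly as a projective subscheme of $\PP\widebar{\mathcal{E}}$---parameterizing lines in the fibres of $\pi$ incident to the section---rather than as a birational modification of a pre-existing $W^+$. The equivalence $\widehat{\Ku}(X) \simeq \Db(W^{\doubleplus})$ then comes from comparing two blowup descriptions of $\widehat{X}$ via mutations. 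Thus the section does double duty: it yields rationality, as you say, but it also supplies the centre of a blowup that resolves $\widetilde{X}$, and this second role is what your outline misses.
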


Theorems \ref{main-theorem} and \ref{main-theorem-special} give a procedure
for connecting the CY3 category $\Ku(X)$ of a smooth
$X$ first to a twisted geometric Calabi--Yau threefold \((W^+,\mathcal{A}^+)\),
and then to a geometric Calabi--Yau threefold \(W^\doubleplus\), where
each step proceeds by degenerating the CY3 category and then 
taking a crepant resolution; 
we expect this to be useful for studying the category $\Ku(X)$ and its moduli spaces of objects by deformation from the geometric case. 
There is a classical geometric version of this
procedure, the simplest example being a conifold transition, where a
Calabi--Yau threefold is degenerated and then crepantly resolved to obtain
another; such constructions have been widely studied in support of ``Reid's
fantasy'' \cite{reid} that all Calabi--Yau threefolds can be connected in this
way. Theorem~\ref{main-theorem} can similarly be regarded as evidence for the
noncommutative version of Reid's fantasy raised in~\cite{categorical-cones}.

Let us also note that Theorem~\ref{main-theorem-special}
provides an analog for quartic double fivefolds of
\cite[Theorem~5.2]{Kuz:cubic}, which gives a crepant resolution of the Kuznetsov component of a nodal (necessarily rational) cubic fourfold by the derived category of a K3 surface.

\begin{remark}[Higher dimensions] 
Quartic double fivefolds are the first in a series of higher-dimensional examples with similar properties. 
Namely, if $X \to \PP^{4m+1}$ is a double cover branched along a quartic hypersurface, then by \cite[Corollary 4.6]{kuznetsov-CY} the category $\Ku(X)$ is Calabi--Yau of dimension $2m+1$. 
We expect our arguments to also be useful for proving analogs of Theorems~\ref{main-theorem} and~\ref{main-theorem-special} when $m > 1$. 
\end{remark} 

\medskip
\noindent\textbf{Outline.}
Some basic facts about the geometry of quartic double fivefolds $X$ arising
from quartics singular along a line are worked out in~\S\ref{S:quartics}, after
which Theorems \ref{main-theorem} and \ref{main-theorem-special} are proven in
\S\ref{section-general} and \S\ref{section-special}, respectively.

\medskip
\noindent\textbf{Conventions.}
All functors are derived. In particular, for a morphism $f \colon X \to Y$ we
write $f^*$ and $f_*$ for the derived pullback and pushforward, and for $E, F
\in \Db(X)$ we write $E \otimes F$ for their derived tensor product.

\medskip
\noindent\textbf{Acknowledgements.}
This project originated at the workshop on ``Rationality problems in algebraic
geometry'' held at the American Institute of Mathematics from July 29, 2019, to
August 2, 2019. We would like to thank the Institute and our fellow
participants for a wonderful workshop. 
We are also grateful to Huachen Chen and Tony Feng for their collaboration in the early stages of this project, 
to Asher Auel and Stefan Schreieder for helpful discussions, 
and to Evgeny Shinder for suggesting the proof of the nontriviality of the Brauer class of $\cA^{+}$ in Theorem~\ref{main-theorem}. 
We also thank the referees for useful comments and suggestions.

This work was
completed in part while RC and XZ were at the Junior Trimester Program in
Algebraic Geometry at the Hausdorff Research Institute for Mathematics during
the autumn of 2023, funded by the Deutsche Forschungsgemeinschaft (DFG, German
Research Foundation) under Germany's Excellence Strategy - EXC-2047/1 - 390685813. 

During the preparation of this paper, 
RC was partially supported by a Research Fellowship from the Alexander von Humboldt Stiftung;
AP was partially supported by NSF grants DMS-2112747, DMS-2052750, and DMS-2143271, and a Sloan Research Fellowship; and
XZ was partially supported by Simons Collaborative Grant 636187, and NSF grants DMS-2101789 and DMS-2052665.


\section{Quartic double fivefolds singular along a line}\label{S:quartics}
In this section we analyze the geometry of a quartic double fivefold singular
along a line. Throughout, \(V\) is a \(6\)-dimensional vector space and
\(\PP V \cong \PP^5\) is the associated projective space of lines. Let
\(U \subset V\) be a \(2\)-dimensional subspace corresponding to a projective
line \(L \coloneqq \PP U\). Set \(\widebar{V} \coloneqq V/U\), choose a
splitting \(V \cong U \oplus \widebar{V}\), 
let \(P   \subset \PP V\) be the
corresponding projective \(3\)-space complementary to \(L\), 
and fix a smooth quadric surface \(S \subset \PP\widebar{V}\).

\subsection{Linear projection from the line}
Projection away from \(L\) induces a rational map
\(\PP V \dashrightarrow \PP\widebar{V}\), which is resolved on the
blow up \(b \colon \widetilde{\PP}V \to \PP V\) along \(L\).
Write \(E \hookrightarrow \widetilde{\PP}V\) for the exceptional divisor.
This data fits into a diagram
\[
\begin{tikzcd}
  E \rar[hook] \dar["b_E"] & \widetilde{\PP} V \dar["b"] \rar["a"'] & \PP\widebar{V} \\
  L \rar[hook] & \PP V.
\end{tikzcd}
\]
Let \(H\) be the hyperplane class on \(\PP V\) and
\(h\) the hyperplane class on \(\PP\widebar{V}\).
Some useful standard facts about the situation are as follows:

\begin{lemma}
\label{bl-facts}
Let \(\mathcal{F} \coloneqq (a_*\sO_{\widetilde{\PP}V}(H))^\vee\). Then
\begin{enumerate}
\item\label{bl-facts.F}
\(\mathcal{F} \cong \sO_{\PP\widebar{V}} \otimes U \oplus \sO_{\PP\widebar{V}}(-h)\);
\item\label{bl-facts.P}
\(a \colon \widetilde{\PP}V \to \PP\widebar{V}\) is isomorphic to
  the projective bundle \(\PP\mathcal{F} \to \PP\widebar{V}\);
\item \label{bl-facts.h}
  \(h = H - E\) in \(\Pic(\widetilde{\PP}V)\); and
\item \label{bl-facts.K}
  the canonical divisor of \(\widetilde\PP V\) is \(K_{\widetilde\PP V} = -6H + 3E = -3H - 3h\).
\end{enumerate}
\end{lemma}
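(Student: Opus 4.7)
The plan is to deduce all four parts of the lemma from an explicit description of $\widetilde{\PP}V$ as the universal family of $2$-planes in $\PP V$ containing $L$, after which the divisor class identities \ref{bl-facts.h} and \ref{bl-facts.K} follow from standard blow-up computations. The main point is the identification with the projective bundle; the rest is routine.

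For the projective bundle identification, I would observe that a $2$-plane in $\PP V$ containing $L$ corresponds to a $3$-dimensional subspace of $V$ containing $U$, which is determined by its image $\ell \subset \widebar{V}$, a line. As $\ell$ varies over $\PP\widebar{V}$, it sweeps out the tautological subbundle $\sO_{\PP\widebar{V}}(-h) \subset \widebar{V} \otimes \sO_{\PP\widebar{V}}$, so the corresponding family of $3$-dimensional subspaces of $V$ is the rank-$3$ vector bundle $\mathcal{E} \coloneqq U \otimes \sO_{\PP\widebar{V}} \oplus \sO_{\PP\widebar{V}}(-h)$ (using the splitting $V = U \oplus \widebar{V}$). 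The associated $\PP^2$-bundle $\PP\mathcal{E} \to \PP\widebar{V}$ then parameterizes the universal family, and admits a tautological morphism $\PP\mathcal{E} \to \PP V$ sending a line in a fiber of $\mathcal{E}$ to the corresponding line in $V$. I would verify directly that this morphism is an isomorphism over $\PP V \setminus L$, since each such point lies on a unique $2$-plane through $L$, and contracts the preimage of each $p \in L$, a full $\PP\widebar{V}$ of choices of $\ell$, to $p$. The universal property of the blow-up then identifies $\PP\mathcal{E}$ with $\widetilde{\PP}V$, proving \ref{bl-facts.P}; pushing $\sO(H) = \sO_{\PP\mathcal{E}}(1)$ forward yields $a_*\sO(H) = \mathcal{E}^\vee$, and hence \ref{bl-facts.F}.

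For \ref{bl-facts.h}, the morphism $a \colon \widetilde{\PP}V \to \PP\widebar{V}$ is defined by the linear system of strict transforms of hyperplanes in $\PP V$ containing $L$; such strict transforms have class $H - E$ and pull back from the hyperplane class $h$ on $\PP\widebar{V}$, so $a^*h = H - E$. Part \ref{bl-facts.K} follows from the standard formula for the canonical class of a blow-up along a smooth center of codimension $c$; with $c = 4$ this gives
\[ K_{\widetilde{\PP}V} = b^*K_{\PP V} + (c-1)E = -6H + 3E, \]
and substituting $E = H - h$ from \ref{bl-facts.h} produces the equivalent form $-3H - 3h$. The only step with any substance is the identification of $\widetilde{\PP}V$ with $\PP\mathcal{E}$ via the universal property of the blow-up, but this reduces to the elementary observation that every point of $\PP V \setminus L$ lies on a unique $2$-plane through $L$.
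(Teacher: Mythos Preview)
Your argument is correct. The paper does not actually prove this lemma, presenting it instead as a list of ``standard facts'', so there is no approach to compare against; your explicit identification of $\widetilde{\PP}V$ with the universal family of $2$-planes through $L$ is exactly the standard way to see this. One small point worth tightening: the universal property of the blow-up only furnishes a morphism $\PP\mathcal{E} \to \widetilde{\PP}V$, so you should add a clause explaining why it is an isomorphism---for instance, both sides are $\PP^2$-bundles over $\PP\widebar{V}$ and your map respects these structures, or it is a proper birational morphism between smooth varieties that restricts to an isomorphism $L \times \PP\widebar{V} \to E$ on exceptional loci.
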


\subsection{Quartic fourfolds}\label{quartic}
Within the complete linear system \(\abs{4H}\) of quartic fourfolds in
\(\PP V\), consider the linear systems
\begin{align*}
\mathfrak{a}
& \coloneqq \Set{Y \in \abs{4H} : Y\;\text{is singular along}\; L}\;\text{and} \\
\mathfrak{b}
& \coloneqq \Set{Y \in \mathfrak{a} : Y \;\text{is everywhere tangent to}\; P\;\text{along}\;S},
\end{align*}
of codimensions \(21\) and \(56\), respectively, consisting of quartics that
are singular along the line \(L\), and those which are furthermore tangent to
the \(3\)-space \(P\) along the quadric \(S\). Let \(Y \hookrightarrow \PP V\)
be a member of \(\mathfrak{a}\), and let
\(\widetilde{Y} \hookrightarrow \widetilde{\PP}V\) be the strict transform of
\(Y\) along the blow up \(b\). Since \(L\) has multiplicity at least \(2\) in \(Y\),
\(\widetilde{Y}\) is a member of the complete linear system
\(\abs{4H - 2E} = \abs{2H + 2h}\) on \(\widetilde{\PP}V\). Moreover, the
projection \(\widetilde{Y} \to \PP\widebar{V}\) exhibits \(\widetilde{Y}\) as a conic
bundle in \(\PP\mathcal{F} \to \PP\widebar{V}\) corresponding to a section
\(\theta \colon \sO_{\PP\widebar{V}} \to \Sym^2(\mathcal{F}^\vee) \otimes \sO_{\PP\widebar{V}}(2h)\).
Every such conic bundle arises in this way, and the ones corresponding to
\(Y \in \mathfrak{b}\) are those whose \(\sO_{\PP\widebar{V}}(4h)\) component
of \(\theta\) is a scalar multiple of an equation for \(2S \subset P \cong \PP\widebar{V}\).

The following describes the singularities of
\(\widetilde{Y}\) when it arises from a general member of \(\mathfrak{a}\) or
\(\mathfrak{b}\). In the latter case, the surface \(S \hookrightarrow Y\) may
be viewed as a subscheme of \(\widetilde{Y}\) since \(S\) is disjoint from
\(L\). Recall that an isolated singularity is called an \emph{ordinary double
point} or a \emph{node} if it is a hypersurface singularity defined by a
homogeneous quadratic equation of full rank.

\begin{lemma}
\label{quartic-singularities}
The strict transform \(\widetilde{Y}\) along the blow up
\(b \colon \widetilde{\PP} V \to \PP V\)
\begin{enumerate}
\item\label{quartic-singularities.L}
is smooth for general \(Y \in \mathfrak{a}\), and
\item\label{quartic-singularities.P}
has only \(18\) nodes along \(S \hookrightarrow \widetilde{Y}\)
for general \(Y \in \mathfrak{b}\).
\end{enumerate}
\end{lemma}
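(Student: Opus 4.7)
The overall plan is to combine a Bertini-type argument with the conic bundle description \(a\colon \widetilde{Y}\to \PP\widebar{V}\) of~\S\ref{quartic}: away from the exceptional divisor \(E\), the map \(b\) is an isomorphism, so the singular locus of \(\widetilde{Y}\) coincides with that of \(Y\setminus L\); over \(E\), the geometry is controlled by the defining section \(\theta\), whose three components \(\theta_0,\theta_1,\theta_2\) take values in \(\Sym^2 U^\vee\otimes\sO_{\PP\widebar V}(2h)\), \(U^\vee\otimes\sO_{\PP\widebar V}(3h)\), and \(\sO_{\PP\widebar V}(4h)\), corresponding respectively to the bidegree pieces \(F_{2,2}\), \(F_{1,3}\), \(F_{0,4}\) of the quartic equation of \(Y\).

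For part~\ref{quartic-singularities.L}, I would verify that the linear subsystem of \(\abs{2H+2h}\) on \(\widetilde{\PP}V\) corresponding to \(\mathfrak{a}\) is basepoint-free, so that a general member is smooth by Bertini. Off \(E\), basepoint freeness is immediate, as \(\mathfrak{a}\) only imposes conditions on the restriction of the quartic equation to \(L\); on \(E\), it reduces to a direct local computation in which the three components of \(\theta\) can be varied independently, subject only to the constraints \(F_{4,0}=F_{3,1}=0\) defining \(\mathfrak{a}\).

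For part~\ref{quartic-singularities.P}, the additional condition in \(\mathfrak{b}\) forces \(\theta_2=q_S^2\) (up to scalar), where \(q_S\) is a defining equation of \(S\); in particular, the section of \(a\) coming from the embedding \(P\hookrightarrow\widetilde{\PP}V\) meets \(\widetilde{Y}\) along \(\{\theta_2=0\}=2S\), and for each \(\bar v\in S\) the conic \(\widetilde{Y}_{\bar v}\) passes through the section point. A direct partial-derivative computation then shows that \(\widetilde{Y}\) is singular at this section point precisely when \(\theta_1(\bar v)=0\): the \(\bar v\)-partials automatically vanish because \(\theta_2 = q_S^2\) has vanishing first-order derivatives on \(S\), while the fiber partials vanish exactly when \(\theta_1(\bar v)=0\). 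Writing \(\theta_1 = B_1 u_1^\vee + B_2 u_2^\vee\) with \(B_1,B_2\in H^0(\PP\widebar V,\sO(3))\), this singular locus is the scheme \(\{B_1=B_2=0\}\cap S\), of expected length \(2\cdot 3\cdot 3 = 18\) by Bezout, and reduced of that length for general \(Y\in\mathfrak{b}\). A Bertini argument parallel to part~\ref{quartic-singularities.L}, adapted to the restricted linear system \(\mathfrak{b}\), should give smoothness of \(\widetilde{Y}\) away from these \(18\) candidate points.

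The main obstacle is to verify that each of the \(18\) points is actually an ordinary double point. A block computation of the \(5\times 5\) Hessian at such a point, using the splittings above, reduces nondegeneracy of the Hessian to invertibility of a specific \(2\times 2\) matrix built from the derivatives of \((B_1,B_2)\) along directions tangent to \(S\) at \(\bar v\). This is an open condition on pairs \((Y,\bar v)\) in the incidence variety \(\{(Y,\bar v)\in\mathfrak{b}\times S : B_1(\bar v)=B_2(\bar v)=0\}\), so to conclude I would exhibit one explicit quartic \(Y_0\in\mathfrak{b}\) for which all \(18\) intersection points are nodes, and then invoke openness together with irreducibility of \(\mathfrak{b}\).
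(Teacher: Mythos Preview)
Your proposal is correct and follows essentially the same route as the paper, with two minor detours worth pointing out.

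For~\ref{quartic-singularities.L}, the paper observes that since every section of \(\Sym^2(\mathcal{F}^\vee)\otimes\sO_{\PP\widebar V}(2h)\) arises from some \(Y\in\mathfrak{a}\) (as noted in \S\ref{quartic}), a general \(Y\in\mathfrak{a}\) gives a \emph{general} member of the complete linear system \(\lvert 2H+2h\rvert\), which is basepoint-free because \(H+h\) is ample. This bypasses your separate basepoint-freeness check on and off \(E\).

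For~\ref{quartic-singularities.P}, your reduction is right, but the explicit-example step is unnecessary. Your \(2\times 2\) invertibility condition says precisely that the map \((B_1,B_2)\rvert_S\) is a submersion at \(\bar v\), i.e.\ that the scheme \(\{B_1=B_2=0\}\cap S\) is reduced at \(\bar v\); equivalently, that \(dB_1,dB_2,dq_S\) are linearly independent at \(\bar v\) in \(P\). This is the same transversality you already need for the Bezout count, and it holds for general cubics \(B_1,B_2\). The paper exploits this directly: working on \(\PP V\) (where \(\widetilde Y=Y\) near \(S\), since \(S\cap L=\varnothing\)), when the differentials of \(y_1,y_2,B_1,B_2,q_S\) are independent they serve as formal local coordinates, and the degree-two part of the defining equation \(\beta_{ij}y_iy_j+B_1y_1+B_2y_2+q_S^2\) is then visibly a full-rank quadric in five variables. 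So reducedness of the \(18\) points already forces them to be nodes.
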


\begin{proof}
For \ref{quartic-singularities.L}, if \(Y\) is a general member of
\(\mathfrak{a}\), then \(\widetilde Y\) is a general member of \(|2H+2h|\);
since this latter linear system is base point free, Bertini's
theorem implies \(\widetilde{Y}\) is smooth. For \ref{quartic-singularities.P},
choose linear forms \(y_1\) and \(y_2\) on \(\PP V\) which together cut out the
\(3\)-plane \(P\). Then a quartic \(Y\) in the linear system \(\mathfrak{b}\)
is defined by an equation
of the form
\[
Y =
\mathrm{V}(
  \beta_{11} y_1^2 + \beta_{12} y_1y_2 + \beta_{22} y_2^2 +
  \alpha_1 y_1 + \alpha_2 y_2 + q^2
)
\]
where \(\beta_{11}, \beta_{12}, \beta_{22} \in \mathrm{H}^0(P, \sO_P(2))\),
\(\alpha_1, \alpha_2 \in \mathrm{H}^0(P,\sO_P(3))\), and \(q\) is an equation
of \(S\) in \(P\); here, functions on \(P\) are viewed as functions on
\(\PP V\) via the splitting \(V \cong U \oplus \widebar{V}\). The base points
of \(\mathfrak{b}\) are contained in the line \(L\) and the quadric surface
\(S\), and since \(y_1\) and \(y_2\) span the space of linear functions on
\(L\), the base points of the strict transform of \(\mathfrak{b}\) on
\(\widetilde{\PP} V\) are contained in \(S\). Therefore the singularities of
\(\widetilde{Y}\) for general \(Y \in \mathfrak{b}\) are contained in \(S\),
and are those points where the above equation vanishes to order at least \(2\),
and these are the points where both cubics \(\alpha_1\) and \(\alpha_2\)
vanish. It remains to observe that when these two cubics intersect the quadric
\(q\) in the projective \(3\)-space \(P\) at \(18\) reduced points, the
corresponding singularities on \(\widetilde{Y}\) are nodes: indeed, this means
that the linear terms of \(y_1\), \(y_2\), \(\alpha_1\), \(\alpha_2\), and
\(q\) in formal local coordinates are linearly independent, and so the tangent
cone therein is a full rank quadric.
\end{proof}

\subsection{Double quartic fivefolds}\label{double-quartic-fivefolds-setup}
Let \(f \colon X \to \PP V\) and
\(\widetilde{f} \colon \widetilde{X} \to \widetilde{\PP}V\) be the double
covers branched along \(Y\) and \(\widetilde{Y}\). Lemma
\ref{quartic-singularities} shows that \(\widetilde{X}\) is smooth for
general \(Y \in \mathfrak{a}\), and has only \(18\) nodes along the preimage
of \(S\) for general \(Y \in \mathfrak{b}\). In both cases,
the canonical morphism \(b_X \colon \widetilde{X} \to X\) resolves the
singularities along \(L\). Writing \(Z \coloneqq b_X^{-1}(L)\) for the
exceptional divisor and
\(\pi \coloneqq a \circ \widetilde{f} \colon \widetilde{X} \to \PP\widebar{V}\),
there is a commutative diagram
\[
  \begin{tikzcd}
    Z \rar["i"', hook] \dar["b_Z"] & \widetilde{X} \dar["b_X"] \rar["\pi"'] & \PP\widebar{V} \\
    L \rar[hook] & X.
  \end{tikzcd}
\]
%
The essential point is that \(\widetilde{X}\) is a quadric surface bundle
over \(\PP\widebar{V}\). To simplify notation, write \(H\) and \(h\) for the
pullback under \(\widetilde{X} \to \widetilde{\PP} V\) of the corresponding
divisor classes. We have:

\begin{lemma}
\label{quadric-bundle}
Let \(\mathcal{E} \coloneqq (\pi_*\sO_{\widetilde X}(H))^\vee\). Then
\begin{enumerate}
\item\label{quadric-bundle.E}
\(\mathcal{E}
\cong \mathcal{F} \oplus \sO_{\PP\widebar{V}}(h)
\cong \sO_{\PP\widebar{V}}(-h) \oplus \sO_{\PP\widebar{V}} \oplus \sO_{\PP\widebar{V}} \oplus \sO_{\PP\widebar{V}}(h)\);
\item\label{quadric-bundle.class}
\(\widetilde X\) embeds into \(\PP\mathcal{E}\) as a hypersurface of class
\(2H + 2h\); and
\item\label{quadric-bundle.K}
the canonical class of \(\widetilde X\) is \(K_{\widetilde X} = -2H - 2h\).
\end{enumerate}
\end{lemma}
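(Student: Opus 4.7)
The plan is to prove parts (i) and (iii) by direct computations, and then build the embedding of (ii) from (i), with the hypersurface class fixed using (iii).

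For part (i), I would use that $\widetilde{f}$ is a double cover with $\widetilde{f}_*\sO_{\widetilde{X}} = \sO_{\widetilde{\PP}V} \oplus \sO_{\widetilde{\PP}V}(-H-h)$ (since the branch $\widetilde{Y}$ lies in $|2H+2h|$); projection formula yields $\widetilde{f}_*\sO_{\widetilde{X}}(H) = \sO_{\widetilde{\PP}V}(H) \oplus \sO_{\widetilde{\PP}V}(-h)$, and pushing further along $a$ using Lemma \ref{bl-facts} to compute $a_*\sO(H) = \mathcal{F}^\vee$ and $a_*\sO(-h) = \sO_{\PP\widebar{V}}(-h)$ gives $\pi_*\sO_{\widetilde{X}}(H) = \mathcal{F}^\vee \oplus \sO_{\PP\widebar{V}}(-h)$; dualize for (i). Part (iii) is the standard canonical bundle formula for a double cover: $K_{\widetilde{X}} = \widetilde{f}^*(K_{\widetilde{\PP}V} + \tfrac{1}{2}\widetilde{Y}) = \widetilde{f}^*(-3H - 3h + H + h) = -2H - 2h$, using $K_{\widetilde{\PP}V} = -3H - 3h$ from Lemma \ref{bl-facts}.

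For part (ii), the idea is to use (i) to produce a morphism $\iota \colon \widetilde{X} \to \PP\mathcal{E}$. The counit $\pi^*\mathcal{E}^\vee = \pi^*\pi_*\sO_{\widetilde{X}}(H) \to \sO_{\widetilde{X}}(H)$ is surjective because cohomology and base change (with $R^i\pi_*\sO_{\widetilde{X}}(H) = 0$ following from the direct sum decomposition used in (i) together with Serre vanishing on the $\PP^2$-bundle $a$) identifies fibers of $\mathcal{E}^\vee$ with the global sections of $\sO(H)$ on quadric surface fibers of $\pi$, and $\sO(1)$ is globally generated on any quadric in $\PP^3$. This surjection gives $\iota$ with $\iota^*\sO_{\PP\mathcal{E}}(1) = \sO_{\widetilde{X}}(H)$, fiberwise the standard embedding $Q \hookrightarrow \PP^3$. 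The image is then a relative hypersurface of class $2\zeta + p^*\beta$ for some $\beta \in \Pic\PP\widebar{V}$, where the coefficient $2$ comes from the fiberwise quadric, and $\beta = 2h$ is forced by adjunction combined with (iii), using $K_{\PP\mathcal{E}} = -4\zeta - 4h$ (since $c_1(\mathcal{E}) = c_1(\mathcal{F}) + h = 0$ and $K_{\PP\widebar{V}} = -4h$).

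The main obstacle is checking that $\iota$ is globally a closed immersion rather than just fiberwise so, especially over specializations where the fibers of $\pi$ may degenerate. A cleaner alternative is to construct the hypersurface in $\PP\mathcal{E}$ directly from the quadratic form $\theta$ cutting out $\widetilde{Y}$ in Section~\ref{quartic}: extending $\theta$ to $\widetilde{\theta}(u + v, u' + v') = \theta(u, u') - v v'$ on $\mathcal{E} = \mathcal{F} \oplus \sO(h)$ gives a section of $\Sym^2\mathcal{E}^\vee \otimes \sO(2h)$ whose zero locus in $\PP\mathcal{E}$ has class $2\zeta + 2h$ and, via projection from the section $\PP\sO(h) \subset \PP\mathcal{E}$ (which does not meet the zero locus), is naturally a double cover of $\widetilde{\PP}V$ branched along $\{\theta = 0\} = \widetilde{Y}$, i.e.\ $\widetilde{X}$ itself. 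This avoids the closed-immersion check and makes the hypersurface class manifest.
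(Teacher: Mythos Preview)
Your arguments for parts~\ref{quadric-bundle.E} and~\ref{quadric-bundle.K} are essentially identical to the paper's. The difference lies in part~\ref{quadric-bundle.class}. The paper establishes the embedding by the same local observation you make in your alternative---a double cover of $\PP^2$ branched along a conic is a quadric in $\PP^3$---but leaves this at the Zariski-local level rather than writing down the global form $\widetilde{\theta}$; your explicit extension $\widetilde{\theta}(u+v,u'+v') = \theta(u,u') - vv'$ is a clean way to make this global and simultaneously read off the class, and it is exactly this form that the paper later writes out as the matrix~\eqref{X.bilinear-form}. For the numerical determination of the class, the paper does \emph{not} use adjunction: instead it pushes the ideal sheaf sequence $0 \to \sO_{\PP\mathcal{E}}(-nh) \to \sO_{\PP\mathcal{E}}(2H) \to \sO_{\widetilde{X}}(2H) \to 0$ to $\PP\widebar{V}$ and compares the resulting split sequence of symmetric powers to force $n=2$. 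Your adjunction argument is shorter and just as rigorous, with the mild cost that the logical order becomes \ref{quadric-bundle.E} $\Rightarrow$ \ref{quadric-bundle.K} $\Rightarrow$ \ref{quadric-bundle.class}; the paper's ordering keeps \ref{quadric-bundle.class} independent of \ref{quadric-bundle.K}.
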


\begin{proof}
For \ref{quadric-bundle.E}, compute using the projection formula and
Lemma~\ref{bl-facts}\ref{bl-facts.h}:
\[
  \pi_*\sO_{\widetilde X}(H)
  = a_*\widetilde f_*\widetilde f^* \sO_{\widetilde{\PP}V}(H)
  = a_*\big(\sO_{\widetilde{\PP}V}(H)
    \oplus \sO_{\widetilde{\PP}V}(-H+E)\big)
  = \mathcal{F}^\vee \oplus \sO_{\PP\widebar{V}}(-h).
\]

For \ref{quadric-bundle.class}, note that
\(\widetilde{X} \to \widetilde{\PP} V\) is a double cover of \(\PP^2\) branched
along a conic Zariski-locally over \(\PP\widebar{V}\), so the canonical map
\(\widetilde{X} \to \PP\mathcal{E}\) is an embedding as a relative quadric surface.
Since the relative hyperplane class of \(\PP\mathcal{E} \to \PP\widebar{V}\) is
the pullback of \(H\), and since \(\Pic(\PP\mathcal{E}) \cong \mathbf{Z} H
\oplus \mathbf{Z} h\), the class of \(\widetilde X\) in \(\PP\mathcal{E}\) is
\(2H + nh\) for some integer \(n\). To determine \(n\), consider the sequence
\[
0 \to
\sO_{\PP\mathcal{E}}(-nh) \to
\sO_{\PP\mathcal{E}}(2H) \to
\sO_{\widetilde X}(2H) \to
0.
\]
Pushing forward to \(\PP\widebar{V}\) and computing as in \ref{quadric-bundle.E}
gives
\[
0 \to
\sO_{\PP\widebar{V}}(-nh) \to
\Sym^2(\mathcal{E}^\vee) \to
\Sym^2(\mathcal{F}^\vee) \oplus \mathcal{F}^{\vee}(-h) \to
0.
\]
Since
\(\Sym^2(\mathcal{E}^\vee) \cong \Sym^2(\mathcal{F}^\vee) \oplus \mathcal{F}^{\vee}(-h) \oplus \sO_{\PP\widebar{V}}(-2h)\),
taking Chern classes shows \(n = 2\).

For \ref{quadric-bundle.K}, use Lemma \ref{bl-facts}\ref{bl-facts.K} together
with the description of \(\widetilde X\) as a double cover of \(\widetilde\PP
V\) branched along a divisor of type \(2H + 2h\):
\[
K_{\widetilde X}
= \widetilde f^* K_{\widetilde \PP V} + \frac{1}{2}(2H + 2h)
= (-3H - 3h) + (H + h) = -2H - 2h.
\qedhere
\]
\end{proof}

The following statement collects some basic facts about the geometry of
\(Z\):

\begin{lemma}
\label{exceptional-divisor-Z}
The exceptional divisor \(Z \subset \widetilde{X}\)
\begin{enumerate}
\item\label{exceptional-divisor-Z.double-cover}
is a double cover of \(E \cong L \times \PP\widebar{V}\) branched along the
divisor \(E \cap \widetilde{Y}\) of class \(2H+2h\);
\item\label{exceptional-divisor-Z.divisor-class}
has divisor class \(Z = H - h\) on \(\widetilde{X}\);
\item\label{exceptional-divisor-Z.quadric-fibration}
is a quadric threefold fibration over \(L\) via \(b_Z \colon Z \to L\); and
\item\label{exceptional-divisor-Z.smooth}
is smooth for \(Y \in \mathfrak{a}\) general.
\end{enumerate}
\end{lemma}

\begin{proof}
Item \ref{exceptional-divisor-Z.double-cover} is the fact that \(Z\) is the
restriction of the double covering
\(\widetilde{f} \colon \widetilde{X} \to \widetilde{\PP}V\) over the
exceptional divisor \(E \subset \widetilde{\PP}V\). Combined with Lemma
\ref{bl-facts}\ref{bl-facts.h}, this gives
\ref{exceptional-divisor-Z.divisor-class}.

To see \ref{exceptional-divisor-Z.quadric-fibration}, observe that
\(b_Z \colon Z \to L\) factors through \(E\), and that each fiber of is a
double covering of \(\PP^3\) branched along a quadric surface by
\ref{exceptional-divisor-Z.double-cover}, and so, as in Lemma
\ref{quadric-bundle}, is a quadric.

Finally, \ref{exceptional-divisor-Z.smooth} follows from Bertini's theorem,
which shows that the branch locus \(E \cap \widetilde{Y}\), whence the double
cover \(Z \to E\), is smooth for general \(Y \in \mathfrak{a}\), as the linear
system \(2H + 2h\) on \(E\) is base point free.
\end{proof}

Let \(D \hookrightarrow \PP\widebar{V}\) be the discriminant locus of the
quadric surface bundle \(\pi \colon \widetilde{X} \to \PP\widebar{V}\): as
usual, this is the subscheme over which fibers are singular quadrics, or
equivalently, the locus over which the associated bilinear form has corank at
least \(1\). When \(\pi\) is generically smooth, \(D\) is a surface
and, by \cite[Proposition 1.2.5]{ABB}, its singular locus consists of the
subscheme \(D_0\) over which the bilinear form furthermore has corank at least
\(2\) together with the image of the singular locus of \(\widetilde{X}\). In
the situation at hand, \(D\) is as follows:

\begin{lemma}
\label{discr}
For general quartics \(Y\) in either \(\mathfrak{a}\) or \(\mathfrak{b}\),
the discriminant locus \(D\) of \(\pi\) is a surface of degree \(8\) with
singular locus consisting of
\begin{enumerate}
\item\label{discr.sings}
only \(72\) nodes along \(D_0\) for general \(Y \in \mathfrak{a}\); and
\item\label{discr.sings-P}
only \(18\) additional nodes corresponding to those of \(\widetilde{X}\) for
general \(Y \in \mathfrak{b}\).
\end{enumerate}
\end{lemma}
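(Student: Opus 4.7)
The plan is to read off the degree of \(D\) directly from the quadric bundle data of Lemma~\ref{quadric-bundle}, and then split the analysis of \(\mathrm{Sing}(D)\) via \cite[Proposition 1.2.5]{ABB} into two disjoint contributions: the corank-\(2\) locus \(D_0\), handled by a symmetric Thom--Porteous calculation and a local normal form for \(q\); and the image of \(\mathrm{Sing}(\widetilde X)\), handled by a local analytic model of a node of a quadric bundle.

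First, the hypersurface \(\widetilde X \subset \PP \mathcal{E}\) of class \(2H + 2h\) corresponds to a symmetric morphism \(q \colon \mathcal{E} \to \mathcal{E}^\vee \otimes \sO_{\PP\widebar V}(2h)\) whose determinant is a section of \(\det(\mathcal{E}^\vee)^{\otimes 2} \otimes \sO(8h)\). Using Lemmas~\ref{bl-facts}\ref{bl-facts.F} and~\ref{quadric-bundle}\ref{quadric-bundle.E}, one finds \(\mathcal{E} \cong U \otimes \sO_{\PP\widebar V} \oplus \sO(-h) \oplus \sO(h)\), so \(\det(\mathcal{E})\) is trivial and \(D\) is a section of \(\sO(8h)\), a degree \(8\) surface in \(\PP\widebar V\).

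For~\ref{discr.sings}, smoothness of \(\widetilde X\) from Lemma~\ref{quartic-singularities}\ref{quartic-singularities.L} reduces \(\mathrm{Sing}(D)\) to \(D_0\). A Bertini-type transversality argument on the linear system of sections \(\theta\) parametrizing the \(q\)'s (as described in Section~\ref{quartic}) shows that for general \(Y \in \mathfrak{a}\) the map \(q\) meets the symmetric determinantal stratum of corank \(\geq 2\) transversely, so \(D_0\) is a reduced zero-cycle. Its length is then computed by the Harris--Tu / Józefiak--Lascoux--Pragacz formula for symmetric degeneracy loci of corank \(\geq 2\) in rank \(4\), evaluated on our specific \(\mathcal{E}\); this Chern class calculation returns \(72\). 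At such a point \(\pi(p) \in D_0\), one diagonalizes \(q\) analytically as \(\mathrm{diag}(1,1,s,t)\) with \(s, t\) having linearly independent differentials, so locally \(\det q \sim st\) and \(D\) has a node.

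For~\ref{discr.sings-P}, each of the \(18\) nodes of \(\widetilde X\) given by Lemma~\ref{quartic-singularities}\ref{quartic-singularities.P} sits in a rank-\(3\) quadric fiber, so its image lies in \(D \setminus D_0\). An \(A_1\) singularity on the hypersurface \(\widetilde X \subset \PP \mathcal{E}\) forces the entry of \(q\) controlling the vertex direction to vanish simply at \(\pi(p)\), transversely to the nondegenerate part; a short local coordinate calculation then shows that \(\det q\) has a Morse critical point at \(\pi(p)\), producing a node of \(D\). Openness of smoothness and transversality of \(D_0\) in the universal family over \(\mathfrak{b}\) ensures that the \(72\) nodes of \(D_0\) persist, are distinct from the \(18\) new nodes, and together exhaust \(\mathrm{Sing}(D)\).

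The main obstacle is the Chern class count giving \(72\): one must use the symmetric version of the degeneracy locus formula, since symmetry forces the expected codimension of \(D_0\) to be \(3\) rather than the generic-map value \(4\), and then carry out the evaluation for our explicit \(\mathcal{E}\). The local normal form analyses at both types of singular point are routine, contingent only on the genericity of the quartic equation \(\theta\) in the corresponding linear system.
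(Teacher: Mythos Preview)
Your strategy is sound and parallels the paper's: decompose \(\mathrm{Sing}(D)\) via \cite[Proposition~1.2.5]{ABB} into the corank-\(2\) locus and the image of \(\mathrm{Sing}(\widetilde X)\), count \(D_0\) by a symmetric degeneracy formula, and handle the remaining points by local analysis. The main difference is that you work throughout with the rank-\(4\) quadric surface bundle \(q \colon \mathcal{E} \to \mathcal{E}^\vee(2h)\), whereas the paper immediately observes that, since \(\widetilde X\) is the double cover of \(\widetilde{\PP}V\) branched along the conic bundle \(\widetilde Y\), the discriminant and corank loci of \(\pi\) coincide with those of the \emph{rank-\(3\) conic bundle} \(\widetilde Y \to \PP\widebar V\). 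This buys the paper two things. First, the formula for \([D_0]\) becomes the rank-\(3\) Harris--Tu expression \(4(c_1c_2-c_3)\) evaluated on \(\mathcal{F}^\vee(h)\), giving \(72\) by a one-line Chern polynomial calculation. Second, and more importantly for the Bertini step, the linear system \(\mathfrak a\) is \emph{canonically identified} with all of \(\rH^0(\Sym^2(\mathcal{F}^\vee)(2h))\), so the transversality argument of \cite{Barth:Quadrics} applies directly. In your setup the forms on \(\mathcal{E}\) coming from \(\mathfrak a\) do not fill out \(\rH^0(\Sym^2(\mathcal{E}^\vee)(2h))\): the \(\sO(h)\)-summand is always orthogonal to the rest with fixed norm. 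Your transversality step therefore implicitly passes through the rank-\(3\) picture anyway (indeed you cite \(\theta\) from \S\ref{quartic}, which lives in \(\Sym^2(\mathcal{F}^\vee)(2h)\)).

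Two technical points deserve attention. First, the normal form ``\(\mathrm{diag}(1,1,s,t)\)'' at a corank-\(2\) point is too optimistic: a family of symmetric forms cannot generally be diagonalized through such a point, and the correct local model keeps the off-diagonal entry, so that \(\det q \sim ac-b^2\) for the residual \(2\times 2\) block; this still has a node under your transversality hypothesis, so the conclusion survives. Second, for part~\ref{discr.sings-P} you assert that each node of \(\widetilde X\) sits in a rank-\(3\) fiber and hence over \(D\setminus D_0\), but this is precisely what must be checked. The paper does it by writing down the bilinear form~\eqref{X.bilinear-form} explicitly, computing \(-4\det A\), and verifying that at the points \(\mathrm{V}(\alpha_1,\alpha_2,q)\) the \(2\times 2\) \(\beta\)-block has full rank for general \(Y\in\mathfrak b\); this simultaneously shows disjointness from \(D_0\) and lets one read off the tangent cone of \(D\) as a full-rank quadric in the linear parts of \(\alpha_1,\alpha_2,q\). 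Your openness-in-families remark gestures at disjointness but does not by itself establish it.
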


\begin{proof}
The fibers of \(\pi \colon \widetilde{X} \to \PP\widebar{V}\) are double planes
branched over the conic fibers of \(\widetilde{Y} \to \PP\widebar{V}\), so
\(D\) is also the discriminant locus of the latter conic bundle. Since
\(\widetilde{Y}\) is defined by a section of
\(\Sym^2(\mathcal{F}^\vee) \otimes \mathcal{O}_{\PP\widebar{V}}(2h)\), writing
\(c_i \coloneqq c_i(\mathcal{F}^\vee \otimes \sO_{\PP\widebar{V}}(h))\),
\cite[Theorem 10]{HT:deg} or \cite[Example 14.4.11]{Fulton} apply to give
identities
\[
  [D] = 2c_1 \quad\text{and}\quad
  [D_0] = 4 \det\begin{pmatrix} c_2 & c_3 \\ c_0 & c_1 \end{pmatrix}
\]
in the Chow ring of \(\PP\widebar{V}\) whenever \(D\) and \(D_0\) are
of expected dimensions \(2\) and \(0\), respectively. These have degrees
\(8\) and \(72\), respectively, using
\[
  c(\mathcal{F}^\vee \otimes \sO_{\PP\widebar{V}}(h))
  = (1 + h)^2(1 + 2h)
  = 1 + 4h + 5h^2 + 2h^3.
\]
For general \(Y\) in either \(\mathfrak{a}\) or \(\mathfrak{b}\),
\(\widetilde{Y}\) is generically smooth and so \(D\) is a surface of degree
\(8\). Since the vector bundle
\(\Sym^2(\mathcal{F}^\vee) \otimes \sO_{\PP\widebar{V}}(2h)\) defining
\(\widetilde{Y}\) is globally generated, and since the vector space underlying
\(\mathfrak{a}\) is canonically identified with its space of sections,
a Bertini-type argument as in \cite[Lemma 4]{Barth:Quadrics} shows that the
singular locus of \(D\) consists of nodes supported on the \(0\)-dimensional
locus \(D_0\), giving \ref{discr.sings}.

Similarly, since \(\Sym^2(\mathcal{F}^\vee) \otimes \sO_{\PP\widebar{V}}(2h)\)
is generated away from \(S \hookrightarrow \PP\widebar{V}\) by its sections
corresponding to members of \(\mathfrak{b}\), the Bertini argument also shows
that, for general \(Y \in \mathfrak{b}\), \(D\) has only nodes along \(D_0\)
away from \(S\). Thus to prove \ref{discr.sings-P}, it remains to show that
\(D_0\) is disjoint from \(S\) and that \(D\) has only nodes along \(S\) for
general \(Y \in \mathfrak{b}\). For this, and for later use, note that for any
\(Y \in \mathfrak{b}\), a symmetric bilinear form defining the quadric
surface bundle \(\pi \colon \widetilde{X} \to \PP\widebar{V}\) may be written
with the notation of Lemma \ref{quartic-singularities} as
\begin{equation}\label{X.bilinear-form}
A \coloneqq \frac{1}{2}
\begin{pmatrix}
-2 & 0       & 0       & 0 \\
0  & 2\beta_{11} & \beta_{12}  & \alpha_1 \\
0  & \beta_{12}  & 2\beta_{22} & \alpha_2 \\
0  & \alpha_1     & \alpha_2     & 2q^2
\end{pmatrix} \colon
\mathcal{E} \to \mathcal{E}^\vee \otimes \sO_{\PP\widebar{V}}(2h)
\end{equation}
where the matrix is with respect to the decompositions 
\begin{align*}
\mathcal{E} & \cong \sO_{\PP\widebar{V}}(h) \oplus \sO_{\PP\widebar{V}} \oplus \sO_{\PP\widebar{V}} \oplus \sO_{\PP\widebar{V}}(-h) \\ 
\mathcal{E}^{\vee} \otimes \sO_{\PP\widebar{V}}(2h) & \cong \sO_{\PP\widebar{V}}(h) \oplus \sO_{\PP\widebar{V}}(2h) \oplus \sO_{\PP\widebar{V}}(2h) \oplus \sO_{\PP\widebar{V}}(3h). 
\end{align*}
Therefore an equation for \(D\) in \(\PP\widebar{V}\) is given by
\[
-4\det(A) =
\alpha_2^2\beta_{11} - \alpha_1\alpha_2 \beta_{12} + \alpha_1^2\beta_{22} - q^2(4\beta_{11}\beta_{22} - \beta_{12}^2).
\]
The singularities of \(D\) away from \(D_0\) lie in the
image of the singular locus of \(\widetilde{X}\), which by Lemma
\ref{quartic-singularities}, is the subscheme
\(\mathrm{V}(\alpha_1, \alpha_2, q)\). This is disjoint from \(D_0\) since
none of \(\beta_{11}\), \(\beta_{12}\), \(\beta_{22}\), nor
\(4\beta_{11}\beta_{22} - \beta_{12}^2\) vanish there for general
\(Y \in \mathfrak{b}\). This moreover implies that the tangent cone to \(D\) at
points therein is a quadric defined by products of linear terms of
\(\alpha_1\), \(\alpha_2\), and \(q\), and so, arguing as in
Lemma \ref{quartic-singularities}, \(D\) has only nodes precisely when the
hypersurfaces defined by \(\alpha_1\), \(\alpha_2\), and \(q\) intersect
transversally in \(\PP\widebar{V}\), yielding \ref{discr.sings-P}.
\end{proof}

Since \(\PP\widebar{V}\) is, of course, smooth, a node \(x \in \widetilde{X}\)
must be contained in the nonsmooth locus of the map \(\pi\). So
\(\pi(x) \in D\) and it is a singular point of \(D\). Since the proof of Lemma
\ref{discr}\ref{discr.sings-P} shows that, for general \(Y \in \mathfrak{b}\),
the singularities of \(D\) corresponding to those of \(\widetilde{X}\) lie away
from the corank \(2\) locus \(D_0\), this implies:

\begin{cor}
\label{quartics-nodes-are-cone-points}
For general \(Y \in \mathfrak{b}\), each node of \(\widetilde{X}\) is the
cone point of a corank \(1\) fiber of \(\pi\).
\end{cor}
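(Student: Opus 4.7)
The plan is to break the conclusion into two pieces: that every node $x$ of $\widetilde{X}$ is automatically a singular point of its $\pi$-fiber, and that the fiber in question is a quadric cone, whose only singular point is the cone point.

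For the first piece, I would use the global embedding of $\widetilde{X}$ furnished by Lemma~\ref{quadric-bundle}\ref{quadric-bundle.class}: $\widetilde{X}$ sits inside the smooth projective bundle $\PP\mathcal{E}$ over $\PP\widebar{V}$ as a single hypersurface, quadratic along the fibers of $\PP\mathcal{E} \to \PP\widebar{V}$. In local coordinates $(u_1,\dots,u_4,t_1,t_2,t_3)$ on $\PP\mathcal{E}$, with $u$'s fiber coordinates and $t$'s pulled back from $\PP\widebar{V}$, the defining equation $F(u,t)$ is quadratic in $u$; for $\widetilde{X}$ to be singular at $x=(u_0,t_0)$, \emph{every} partial derivative must vanish, and in particular the fiber derivatives $\partial F/\partial u_i(x)$ must vanish. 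This is precisely the condition that the quadric surface $\pi^{-1}(\pi(x))$ be singular at $x$.

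For the second piece, I would invoke the discussion immediately preceding the corollary, which uses Lemma~\ref{discr}\ref{discr.sings-P} to place $\pi(x)$ in $D$ but off the corank~$2$ locus $D_0$ for general $Y \in \mathfrak{b}$. Consequently the fiber $\pi^{-1}(\pi(x))$ has corank exactly $1$ as a quadric in $\PP^3$, i.e.\ it is a quadric cone. Since a quadric cone has a unique singular point, namely its cone point, combining with the previous paragraph forces $x$ to be that cone point.

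I do not anticipate any serious obstacle: both ingredients are essentially bookkeeping on top of results already established in \S\ref{S:quartics}. The only subtlety worth mentioning is to confirm that the hypersurface argument really applies at each of the $18$ nodes — this is automatic because the ambient $\PP\mathcal{E}$ is smooth and the $18$ nodes of $\widetilde{X}$ identified in Lemma~\ref{quartic-singularities}\ref{quartic-singularities.P} are the \emph{only} singularities, so none of them arise from any pathology of the ambient embedding.
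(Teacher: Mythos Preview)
Your proposal is correct and follows essentially the same two-step approach as the paper: first argue that a node of $\widetilde{X}$ must be a singular point of its $\pi$-fiber, then invoke Lemma~\ref{discr}\ref{discr.sings-P} to conclude that this fiber has corank exactly $1$ and hence a unique singular point. The only cosmetic difference is that you verify the first step via an explicit partial-derivative computation in local coordinates on $\PP\mathcal{E}$, whereas the paper simply observes that since $\PP\widebar{V}$ is smooth, any singular point of the total space must lie in the nonsmooth locus of $\pi$.
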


\subsection{Sections of the singular quadric surface bundles}
\label{quartics-quadric-bundle-sections}
Consider the singular quadric surface bundles
\(\pi \colon \widetilde{X} \to \PP\widebar{V}\) arising from a general member
\(Y\) of the linear system \(\mathfrak{b}\). Since the \(3\)-plane \(P\)
intersects the branch locus \(Y\) doubly along the quadric surface
\(S\), its preimage along the double cover \(f \colon X \to \PP V\) is
reducible, and so is a union \(f^{-1}(P) = P^+ \cup P^-\) of two components,
each isomorphic to \(P\). As \(P\) is disjoint from the line \(L\), the
\(P^\pm\) may be identified as subschemes of \(\widetilde{X}\), providing two
sections
\[
\sigma^\pm \colon
  \PP\widebar{V} \xrightarrow{\sim}
  P^\pm \hookrightarrow
  \widetilde{X}
\]
to \(\pi\); in particular, this implies that \(\widetilde{X}\) is rational.
Moreover, upon examining the bilinear form \eqref{X.bilinear-form},
\(\sigma^\pm\) are seen to correspond to the line subbundles
\[
\mathcal{N}^\pm \coloneqq
\operatorname{image}\big(
  (\pm q, 0, 0, 1)^t \colon
  \sO_{\PP\widebar{V}}(-h) \hookrightarrow
  \mathcal{E}
\big).
\]
The basic fact about the geometry of these sections is:

\begin{lemma}
\label{quartics-sections-mostly-smooth}
Each section \(\sigma^\pm\) passes through every node of \(\widetilde{X}\), and
are otherwise contained in the smooth locus of \(\pi\).
\end{lemma}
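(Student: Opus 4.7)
The plan is to make the statement concrete via the explicit bilinear form \eqref{X.bilinear-form} defining \(\widetilde{X} \subset \PP\mathcal{E}\), together with the identification of \(\sigma^\pm\) as coming from the line subbundles \(\mathcal{N}^\pm \subset \mathcal{E}\) spelled out in~\S\ref{quartics-quadric-bundle-sections}. First I would recall the general principle that, since \(\PP\widebar{V}\) is smooth, the morphism \(\pi\) is smooth at a point \(x \in \widetilde{X}\) precisely when \(\widetilde{X}\) is smooth at \(x\) and \(x\) is a smooth point of the quadric fiber \(\pi^{-1}(\pi(x))\); the latter is equivalent to the nonvanishing of \(A(\pi(x)) \cdot v\), where \(v \in \mathcal{E}_{\pi(x)}\) is any lift of \(x\).

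Next I would evaluate this smoothness criterion on the section. In the decomposition of \(\mathcal{E}\) recorded in~\S\ref{quartic}, the section \(\sigma^\pm\) sits above \(p \in \PP\widebar{V}\) at the point represented by the vector \(v^\pm(p) = (\pm q(p),\, 0,\, 0,\, 1)^t\), and a direct matrix multiplication with \eqref{X.bilinear-form} gives
\[
A(p) \cdot v^\pm(p) \;=\; \tfrac{1}{2}\bigl(\mp 2 q(p),\ \alpha_1(p),\ \alpha_2(p),\ 2 q(p)^2\bigr)^t.
\]
This vector vanishes if and only if \(q(p) = \alpha_1(p) = \alpha_2(p) = 0\), which by the proof of Lemma \ref{quartic-singularities}\ref{quartic-singularities.P} cuts out exactly the image under \(\pi\) of the \(18\) nodes of \(\widetilde{X}\).

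Finally, I would interpret this dichotomy. At a point \(p\) in the image of the nodes, Corollary \ref{quartics-nodes-are-cone-points} tells us that the fiber of \(\pi\) is of corank one with the node as its unique singular point, so \(\sigma^\pm(p)\) must coincide with that node; in particular both sections pass through every node. For \(p\) outside this image, the nonvanishing of \(A(p) \cdot v^\pm(p)\) shows that \(\sigma^\pm(p)\) is a smooth point of the fiber over \(p\)—regardless of whether the fiber itself is smooth, corank one (singular only at the cone point), or corank two (singular along a line)—and \(\widetilde{X}\) is automatically smooth at \(\sigma^\pm(p)\) since all singularities of \(\widetilde{X}\) lie above the image of the nodes. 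Hence \(\pi\) is smooth at \(\sigma^\pm(p)\), as required. The only mildly delicate point is the possibility that a corank two fiber, whose entire singular line might in principle meet the section, could spoil the argument; this is ruled out by Lemma \ref{discr}\ref{discr.sings-P}, which ensures that \(D_0\) is disjoint from the image of the nodes, so the above calculation applies uniformly. I do not anticipate any serious obstacle beyond keeping the linear-algebra bookkeeping straight.
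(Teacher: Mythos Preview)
Your proposal is correct and follows essentially the same approach as the paper: both verify that \(\mathcal{N}^\pm\) avoids the kernel of the bilinear form~\eqref{X.bilinear-form} precisely away from the common vanishing locus of \(q\), \(\alpha_1\), \(\alpha_2\). For the first assertion the paper argues more directly from the construction---the nodes lie in \(S \subset P^+ \cap P^-\) and the sections parameterize \(P^\pm\)---rather than going through the matrix computation and Corollary~\ref{quartics-nodes-are-cone-points} as you do, but both routes are valid (and your final worry about corank-two fibers is already handled by the computation itself).
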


\begin{proof}
Lemma \ref{quartic-singularities}\ref{quartic-singularities.P} implies that
the nodes of \(\widetilde{X}\) lie over \(Y \cap P\), so \(\sigma^\pm\) must
pass through all of them by construction. That \(\sigma^\pm\) are otherwise
contained in the smooth locus of \(\pi\) follows from noting that
\(\mathcal{N}^\pm \hookrightarrow \mathcal{E}\) does not intersect the kernel
of the bilinear form \eqref{X.bilinear-form} at points where
at least one of \(\alpha_1\), \(\alpha_2\), or \(q\) is nonvanishing.
\end{proof}


\section{General situation: twisted geometric component}
\label{section-general}
This section is concerned with the double quartic fivefolds that arise from
a general quartic fourfold singular along the line \(L\). So fix a general
member \(Y\) in the linear system \(\mathfrak{a}\), and continue with the
notation in \S\ref{S:quartics}.

\subsection{Crepant resolution of \(\Ku(X)\)}\label{S:D}
First we construct a crepant resolution of the Kuznetsov component of \(X\)
using the geometric resolution of singularities
\(b_X \colon \widetilde{X} \to X\). Recall that we define the Kuznetsov
component of a smooth prime Fano variety by the semiorthogonal
decomposition~\eqref{DbY}. This semiorthogonal decomposition still exists in
many situations when the Fano variety is singular, and in particular in our
setting:

\begin{lemma}
\label{sod-X}
There is a semiorthogonal decomposition
\[
  \Db(X) = \langle \Ku(X), \sO_X, \sO_X(H), \sO_X(2H), \sO_X(3H) \rangle,
\]
where $\Ku(X) \subset \Db(X)$ is the full subcategory defined by
\begin{equation*}
\Ku(X) = \set{ F \in \Db(X) \st \Hom^\bullet(\sO_X(iH), F) = 0 \text{ for } 0 \leq i \leq 3 } .
\end{equation*}
\end{lemma}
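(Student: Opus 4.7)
The plan is to reduce the existence of the semiorthogonal decomposition to a standard exceptional-collection computation for the double cover $f \colon X \to \PP V$. The first step is to record that, even when $Y$ is singular, the double cover $f$ is finite flat, so it decomposes as
\[
f_*\sO_X \cong \sO_{\PP V} \oplus \sO_{\PP V}(-2H),
\]
the two summands being the $\pm$-eigenspaces for the covering involution (twisted by the branch divisor $Y \in |4H|$).

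Next, I would verify that the line bundles $\sO_X, \sO_X(H), \sO_X(2H), \sO_X(3H)$ form an exceptional collection in $\Db(X)$. For any integers $0 \le i, j \le 3$, the projection formula gives
\[
\Ext^\bullet_X(\sO_X(iH), \sO_X(jH)) \;\cong\; \rH^\bullet\bigl(\PP V,\, \sO_{\PP V}(j-i) \oplus \sO_{\PP V}(j-i-2)\bigr).
\]
For $i = j$ this is $k$ in degree $0$, and for $j < i$ the twists $j-i$ and $j-i-2$ lie in the range $[-5, -1]$, so both summands have vanishing cohomology on $\PP V \cong \PP^5$. Thus the collection is exceptional.

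Finally, to produce the semiorthogonal decomposition with $\Ku(X)$ as the right orthogonal, I would invoke the standard fact that an exceptional collection consisting of perfect objects on a proper scheme generates an admissible subcategory of $\Db(X)$. Since line bundles on $X$ are perfect and $X$ is projective, each $\langle\sO_X(iH)\rangle$ is admissible in $\Db(X)$; the semiorthogonal decomposition of the lemma, together with the intrinsic description of $\Ku(X)$ by vanishing of $\Hom^\bullet(\sO_X(iH), -)$, follows immediately. The main point requiring care, and the only place the singularity of $X$ enters, is checking that admissibility still holds; this is why it is essential that the exceptional objects are perfect (hence compact) rather than merely coherent, since for singular $X$ one has $\Dperf(X) \subsetneq \Db(X)$ in general. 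Otherwise the argument is entirely parallel to the smooth prime Fano case.
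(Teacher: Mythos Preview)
Your argument is correct and follows essentially the same route as the paper: both compute $\Ext^\bullet_X(\sO_X(iH),\sO_X(jH))$ via the splitting $f_*\sO_X \cong \sO_{\PP V}\oplus\sO_{\PP V}(-2H)$ and the projection formula, reducing to cohomology of line bundles on $\PP^5$, and then take $\Ku(X)$ as the right orthogonal of the resulting admissible subcategory. Your explicit remark that perfectness of the line bundles is what guarantees admissibility on the singular $X$ is a useful point that the paper leaves implicit.
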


\begin{proof}
This is a special case of \cite[Lemma 5.1]{cyclic-covers}, but for convenience
we recall the proof in our situation. Since \(X\) is a double cover of
\(\PP V\) branched along a quartic,
\[
  \rH^\bullet(X,\sO_X(mH)) =
  \rH^\bullet(\PP V, \sO_{\PP V}(mH) \oplus \sO_{\PP V}((m - 2)H))
\]
for all integers \(m\).
Taking \(m = 0\) shows that \(\sO_X\) is an exceptional object; hence the same applies to all of the line bundles
\(\sO_X(nH)\).
Similarly, it is easy to see that the displayed cohomology group vanishes for $-3 \leq m \leq -1$,
so the objects $\sO_X, \sO_X(H), \sO_X(2H), \sO_X(3H)$ are semiorthogonal.
Since $\Ku(X)$ is by definition the right orthogonal to the admissible subcategory
generated by these objects, the result follows.
\end{proof}

To construct a resolution of $\Ku(X)$, we will need a suitable semiorthogonal
decomposition of the exceptional divisor $Z \subset \tX$. By
Lemma \ref{exceptional-divisor-Z}\ref{exceptional-divisor-Z.quadric-fibration},
\(b_Z \colon Z \to L\) is a quadric threefold fibration. Applying Serre duality
to the quadric bundle decomposition from \cite[Theorem~4.2]{Kuz:quadric} and
using that \(Z = H - h\) from Lemma
\ref{exceptional-divisor-Z}\ref{exceptional-divisor-Z.divisor-class} therefore
gives:

\begin{lemma}
\label{sod-Z}
There is a semiorthogonal decomposition
\[
  \Db(Z) = \langle
  b_Z^*\Db(L) \otimes \sO_Z(2Z),
  b_Z^*\Db(L) \otimes \sO_Z(Z),
  \mathcal{D}
  \rangle
\]
where \(\mathcal{D} = \langle b_Z^*\Db(L), \Db(L,\mathcal{B}_0') \rangle\)
for \(\mathcal{B}_0'\) the even parts of a sheaf of Clifford algebras on $L$.
\qed
\end{lemma}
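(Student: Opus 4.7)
The plan is to identify $b_Z \colon Z \to L$ as a flat quadric threefold fibration and apply Kuznetsov's semiorthogonal decomposition for quadric bundles.

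First, $Z = \widetilde f^{-1}(E)$ factors as $Z \to E \to L$, where $E \to L$ is the $\PP^3$-bundle coming from the blowup $b$ of $L \subset \PP V$, and $Z \to E$ is the double cover branched along $\widetilde Y \cap E$. Since $Y$ has multiplicity exactly $2$ along $L$, the divisor $\widetilde Y \cap E \subset E$ is cut out fiberwise over $L$ by the leading quadratic term of the equation of $Y$ along $L$; for general $Y \in \mathfrak{a}$, this is a smooth quadric surface bundle over $L$. Each fiber of $b_Z$ is therefore a double cover of $\PP^3$ branched along a smooth quadric surface, i.e., a smooth quadric threefold $Q_3 \subset \PP^4$, so $b_Z$ is a smooth flat quadric threefold fibration.

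Next, I would apply Kuznetsov's semiorthogonal decomposition for regular quadric fibrations of relative dimension $3$ to get
\[
\Db(Z) = \langle \Db(L, \cB_0'), b_Z^* \Db(L), b_Z^* \Db(L) \otimes \sO_Z(H_Z), b_Z^* \Db(L) \otimes \sO_Z(2H_Z) \rangle,
\]
where $H_Z$ is the relative hyperplane class of $Z$ inside the ambient $\PP^4$-bundle $\PP((b_{Z*}\sO_Z(H_Z))^\vee)$ over $L$, and $\cB_0'$ is the sheaf of even parts of the Clifford algebra of the associated quadratic form. The key geometric matching is $\sO_Z(H_Z) \cong \sO_Z(-Z) \otimes b_Z^* M$ for some line bundle $M$ on $L$: since the exceptional divisor of the blowup satisfies $\sO_E(E) = \sO_E(-h|_E)$ with $h|_E$ the relative hyperplane class of $E \to L$, and $Z = \widetilde f^{-1}(E)$ gives $\sO_Z(Z) = \widetilde f^*(\sO_E(E))|_Z$, we have $\sO_Z(-Z) = \widetilde f^*\sO_E(h|_E)|_Z$; on each fiber $Q_3 \subset \PP^4$ of $b_Z$, the class $H_Z$ restricts to $\sO_{Q_3}(1)$, which is the pullback of $\sO_{\PP^3}(1)$ via the double cover $Q_3 \to \PP^3$, so $\sO_Z(H_Z)$ agrees with $\widetilde f^* \sO_E(h|_E)|_Z$ up to a class pulled back from $L$ (tracked via Lemma~\ref{bl-facts}\ref{bl-facts.h}, which writes $h = H - E$). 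Since tensoring $b_Z^* \Db(L)$ by a line bundle pulled back from $L$ is an auto-equivalence, we may replace each twist $\sO_Z(iH_Z)$ by $\sO_Z(-iZ)$.

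Finally, to reach the precise form stated in the lemma, I would move $\Db(L, \cB_0')$ from the left end to the right end of the decomposition by mutating through the three pullback components (or equivalently by invoking the right-dual version of Kuznetsov's sod), and then tensor the whole decomposition by $\sO_Z(2Z)$ to re-index the twists as $\sO_Z(2Z)$, $\sO_Z(Z)$, $\sO_Z$. The principal obstacle I foresee is the bookkeeping in identifying the relative hyperplane class $H_Z$ with $-Z|_Z$ modulo pullbacks from $L$, and in verifying that the mutation/duality preserves the abstract Clifford component $\Db(L, \cB_0')$ up to a change of embedding functor; Kuznetsov's quadric bundle theorem and the standard flexibility of semiorthogonal decompositions under twists and mutations handle the rest.
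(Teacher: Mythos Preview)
Your approach is essentially the same as the paper's: identify $b_Z \colon Z \to L$ as a quadric threefold fibration via the factorization $Z \to E \to L$ with $Z \to E$ a double cover branched along $\widetilde Y \cap E$ (of class $2H+2h$), then apply Kuznetsov's quadric bundle decomposition and rearrange. The paper condenses your final steps into the single phrase ``applying Serre duality to the quadric bundle decomposition and observing that $Z = H - h$''; your bookkeeping concern about matching the relative hyperplane class with $-Z|_Z$ modulo pullbacks from $L$ is exactly the content of the identity $Z = H - h$ from Lemma~\ref{bl-facts}\ref{bl-facts.h} (note $H|_E$ is pulled back from $L$, so your claim $\sO_E(E) = \sO_E(-h|_E)$ should really read $\sO_E(E) = \sO_E(H-h)|_E$, but the discrepancy is absorbed into the twist from $L$, as you anticipated).
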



Now we can give the promised resolution of $\tKu(X)$.
\begin{lemma}
\label{lemma-DbtX}
There is a semiorthogonal decomposition
\begin{equation}
\label{tKuX}
\begin{multlined}
\Db(\widetilde X) =
\langle
  i_*b_Z^*\Db(L) \otimes \sO_Z(2Z),
  i_*b_Z^*\Db(L) \otimes \sO_Z(Z), \\
  \widetilde\Ku(X), 
  \sO_{\widetilde X},
  \sO_{\widetilde X}(H),
  \sO_{\widetilde X}(2H),
  \sO_{\widetilde X}(3H)
\rangle,
\end{multlined}
\end{equation} 
where \(\tKu(X)\) is a crepant categorical resolution of singularities of
\(\Ku(X)\). More precisely, writing \(\Ku(X)^{\perf}\) for the subcategory of
\(\Ku(X)\) consisting of perfect complexes, pullback and pushforward along
$b_X \colon \tX \to X$ restrict to functors
\begin{equation*}
b_X^* \colon \Ku(X)^{\perf} \to \tKu(X)
\qquad \text{and} \qquad
b_{X*} \colon \tKu(X) \to \Ku(X)
\end{equation*}
which are mutually left and right adjoint.
\end{lemma}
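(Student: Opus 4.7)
The strategy is to apply Kuznetsov's categorical resolution machinery from~\cite{Kuz:cat-res} to the resolution $b_X \colon \widetilde{X} \to X$, using the Lefschetz-type decomposition of $\Db(Z)$ from Lemma~\ref{sod-Z}, and then to refine the outcome by pulling back the decomposition of $\Db(X)$ from Lemma~\ref{sod-X}.

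The first preparation is a discrepancy calculation. From Lemma~\ref{quadric-bundle}\ref{quadric-bundle.K} one has $K_{\widetilde X} = -2H - 2h$, while $K_X = -4H$ since $X$ is a double cover of $\PP V$ branched along a quartic, and $Z = H - h$ on $\widetilde X$ by Lemma~\ref{bl-facts}\ref{bl-facts.h}, so
\[
K_{\widetilde X / X} = K_{\widetilde X} - b_X^* K_X = (-2H - 2h) + 4H = 2Z.
\]
The discrepancy coefficient $2$ controls the number of ``extra'' components that must be cut off from $\Db(\widetilde X)$ to obtain a crepant categorical resolution. Correspondingly, I would repackage Lemma~\ref{sod-Z} as a length-$3$ right Lefschetz decomposition of $\Db(Z)$ with respect to the normal bundle $\sO_Z(Z)$, with base $\mathcal{B}_0 := \mathcal{D} = \langle b_Z^*\Db(L), \Db(L,\mathcal{B}_0') \rangle$ and inner pieces $\mathcal{B}_1 = \mathcal{B}_2 := b_Z^*\Db(L)$, forming the nested chain $\mathcal{B}_2 \subseteq \mathcal{B}_1 \subseteq \mathcal{B}_0$ with
\[
\Db(Z) = \langle \mathcal{B}_2 \otimes \sO_Z(2Z),\ \mathcal{B}_1 \otimes \sO_Z(Z),\ \mathcal{B}_0 \rangle.
\]
The highest twist in this decomposition, namely $2$, matches the discrepancy coefficient $2$; this will be the numerical input for crepancy.

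Kuznetsov's theorem then produces the admissible subcategory
\[
\widetilde{\Db}(X) := \{\, F \in \Db(\widetilde{X}) \mid i^* F \in \mathcal{B}_0 \,\},
\]
the semiorthogonal decomposition
\[
\Db(\widetilde{X}) = \langle i_* \mathcal{B}_2 \otimes \sO_Z(2Z),\ i_* \mathcal{B}_1 \otimes \sO_Z(Z),\ \widetilde{\Db}(X) \rangle,
\]
and mutually adjoint functors $b_X^* \colon \Dperf(X) \to \widetilde{\Db}(X)$ and $b_{X*} \colon \widetilde{\Db}(X) \to \Db(X)$; the length-discrepancy matching is precisely Kuznetsov's numerical criterion for this resolution to be weakly crepant. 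To refine the decomposition, I would use that $X$ has rational singularities, so $b_{X*}\sO_{\widetilde X} = \sO_X$, which implies the line bundles $\sO_{\widetilde X}(iH) = b_X^*\sO_X(iH)$ remain pairwise exceptional on $\widetilde X$ and lie in $\widetilde{\Db}(X)$ as pullbacks of perfect complexes. Defining $\widetilde{\Ku}(X)$ as the orthogonal to $\langle \sO_{\widetilde X}, \sO_{\widetilde X}(H), \sO_{\widetilde X}(2H), \sO_{\widetilde X}(3H) \rangle$ inside $\widetilde{\Db}(X)$ and substituting the resulting finer decomposition into the outer one yields~\eqref{tKuX}. The mutual adjointness of $b_X^*, b_{X*}$ between $\Ku(X)^{\perf}$ and $\widetilde{\Ku}(X)$ then follows because both functors preserve the line-bundle components and hence restrict to the Kuznetsov pieces, where the weak crepancy of the ambient pair $\widetilde{\Db}(X) \to \Db(X)$ descends.

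The main technical hurdle will be verifying Kuznetsov's hypotheses: specifically, the semiorthogonality of the three pieces of the Lefschetz decomposition, which reduces to vanishings $b_{Z*}\sO_Z(-kZ) = 0$ for $k = 1, 2$ (a consequence of $b_Z \colon Z \to L$ being a smooth quadric threefold fibration, so that $\sO(-k)$ is acyclic on the fibers), and the weak crepancy condition, which is addressed by the length-discrepancy matching identified above. The remaining conclusions are then formal consequences of Kuznetsov's framework.
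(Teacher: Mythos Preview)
Your proposal is correct and follows essentially the same approach as the paper: apply \cite[Theorem~1]{Kuz:cat-res} to the resolution $b_X \colon \widetilde X \to X$ with the Lefschetz decomposition of $\Db(Z)$ from Lemma~\ref{sod-Z}, obtain the crepant categorical resolution $\widetilde{\mathcal{D}}$ of $\Db(X)$, and then carve out $\widetilde{\Ku}(X)$ as the orthogonal to the pulled-back line bundles. You in fact supply more detail than the paper does---the discrepancy computation $K_{\widetilde X/X} = 2Z$ and the explicit length-matching check for weak crepancy are left implicit there---so your write-up is if anything more complete.
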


\begin{proof}
Apply \cite[Theorem 1]{Kuz:cat-res} to the resolution of singularities
\(b_X \colon \widetilde X \to X\) and the decomposition of \(\Db(Z)\) from
Lemma \ref{sod-Z}. For this, Lemma \ref{exceptional-divisor-Z} implies
that \(nZ = -nH + nh\) restricts to \(\sO(n)\) along the quadric threefold
fibers of \(b_Z \colon Z \to L\), from which it follows from the
theorem on formal functions that \(b_{X*}\sO_{\widetilde{X}} \cong \sO_X\).
It is then easy to see that the assumptions of Kuznetsov's theorem are
satisfied, so that it gives a semiorthogonal decomposition
\[
\Db(\widetilde X) =
\langle
  i_*b_Z^*\Db(L) \otimes \sO_Z(2Z),
  i_*b_Z^*\Db(L) \otimes \sO_Z(Z),
  \widetilde{\mathcal{D}}
\rangle
\]
where \(\widetilde{\mathcal{D}}\) is a crepant categorical resolution of
singularities of \(\Db(X)\). In particular, $b_X^*$ fully faithfully embeds the
category of perfect complexes on $X$ into $\widetilde{\mathcal{D}}$, so
$\widetilde{\mathcal{D}}$ contains the objects
\(b_X^*\sO_X(mH) = \sO_{\widetilde X}(mH)\) for \(m = 0,1,2,3\), and they
remain a semiorthogonal exceptional collection. Therefore, we obtain a
semiorthogonal decomposition
\[
\widetilde{\mathcal{D}} =
\langle
  \widetilde\Ku(X),
  \sO_{\widetilde X},
  \sO_{\widetilde X}(H),
  \sO_{\widetilde X}(2H),
  \sO_{\widetilde X}(3H)
\rangle,
\]
where \(\widetilde{\Ku}(X)\) is the right orthogonal to the subcategory
of \(\widetilde{\mathcal{D}}\) generated by the displayed line bundles.
Putting these decompositions together now gives the statement.
\end{proof}

\subsection{Clifford algebra description of \(\tKu(X)\)}\label{S:clifford}
Since \(\pi \colon \widetilde X \to \PP\widebar{V}\) is a quadric surface
fibration by Lemma~\ref{quadric-bundle},
\cite[Theorem 4.2]{Kuz:quadric} gives a semiorthogonal decomposition
\begin{equation}
\label{DbtX2}
\Db(\tX) =
\langle
  \Db(\PP\widebar{V}, \mathcal{B}_0),
  \pi^*\Db(\PP\widebar{V}) \otimes \sO_{\widetilde X},
  \pi^*\Db(\PP\widebar{V}) \otimes \sO_{\widetilde X}(H)
\rangle,
\end{equation}
where \(\cB_0\) is the sheaf on \(\PP\widebar{V}\) of even Clifford algebras associated
with \(\pi \colon \widetilde{X} \to \PP\widebar{V}\). This subsection aims to prove:

\begin{prop}
\label{prop-Ku-Cl}
There is an equivalence of categories $\tKu(X) \simeq \Db(\PP\widebar{V}, \mathcal{B}_0)$.
\end{prop}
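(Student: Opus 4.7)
The plan is to show that the two semiorthogonal decompositions of $\Db(\tX)$ obtained in Lemma \ref{lemma-DbtX} and in \eqref{DbtX2} can be converted into one another by an explicit sequence of mutations, and then read off the equivalence of the Kuznetsov-like factors as a consequence. First I would expand $\pi^*\Db(\PP\widebar{V})$ in \eqref{DbtX2} using Beilinson's exceptional collection $\{\sO_{\PP\widebar{V}}(jh)\}_{j=0}^{3}$, so that to the right of $\Db(\PP\widebar{V}, \mathcal{B}_0)$ there appear the eight line bundles $\sO_{\tX}(jh)$ and $\sO_{\tX}(H+jh)$ for $j = 0,1,2,3$. The target SOD from Lemma \ref{lemma-DbtX} has $\tKu(X)$ sandwiched between the four $Z$-components $i_* b_Z^* \Db(L) \otimes \sO_Z(kZ)$ for $k = 1, 2$ on the left and the four line bundles $\sO_{\tX}(jH)$ for $j = 0, 1, 2, 3$ on the right. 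Since $\sO_{\tX}$ and $\sO_{\tX}(H)$ are present on both sides, the task reduces to mutating the six remaining $h$-twisted line bundles into the two missing pure $H$-twists $\sO_{\tX}(2H)$, $\sO_{\tX}(3H)$ and the four generators of the $Z$-components.

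The key computational tool is the Koszul triangle
\begin{equation*}
\sO_{\tX}(-Z) \to \sO_{\tX} \to i_*\sO_Z,
\end{equation*}
together with the identity $h = H - Z$ from Lemma \ref{bl-facts}\ref{bl-facts.h}; these let me rewrite each $h$-twisted line bundle as $\sO_{\tX}(aH - bZ)$ and relate it to nearby line bundles via triangles whose third term is a sheaf supported on $Z$ of the form $i_*(b_Z^*\mathcal{L} \otimes \sO_Z(kZ))$ for a line bundle $\mathcal{L}$ on $L$. Applying these triangles as elementary mutations in a carefully chosen order---of the kind laid out in the mutation diagram encoded in the preamble---successively replaces each $h$-twisted line bundle with the required combination of pure $H$-twists and $Z$-supported objects. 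At each step one has to compute the relevant $\Ext^{\bullet}$ groups between adjacent objects in order to identify the mutation cone with the expected term.

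After the full sequence of mutations, the right portion of the expanded \eqref{DbtX2} matches exactly the right portion of the SOD from Lemma \ref{lemma-DbtX}. The left-most factor of a semiorthogonal decomposition is uniquely determined as the right orthogonal to the rest, so the image of $\Db(\PP\widebar{V}, \mathcal{B}_0)$ under the composite mutation equivalence must coincide with $\tKu(X)$, yielding the desired equivalence $\tKu(X) \simeq \Db(\PP\widebar{V}, \mathcal{B}_0)$. The principal obstacle is bookkeeping: one must order the swaps so that all intermediate collections remain semiorthogonal, identify each mutation cone via the correct twist of the Koszul triangle, and keep track of shifts throughout---an intricate but mechanical task once the order has been fixed, for which the mutation diagram in the preamble provides a concrete roadmap.
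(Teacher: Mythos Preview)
Your proposal is correct and follows essentially the same approach as the paper: expand both semiorthogonal decompositions into exceptional collections of eight line bundles (respectively, mixed $\sO_{\tX}$/$i_*\sO_Z$ objects), then perform an explicit sequence of mutations---driven by the Koszul triangle for $Z$ and the relation $h = H - Z$---to match them, whence the orthogonal complements $\Db(\PP\widebar{V},\mathcal{B}_0)$ and $\tKu(X)$ coincide. The only minor discrepancy is your choice of Beilinson window $\{\sO(jh)\}_{j=0}^3$ versus the paper's $\{\sO(jh)\}_{j=-1}^2$, and your remark that $\sO_{\tX}$ and $\sO_{\tX}(H)$ can be kept fixed oversimplifies slightly (the paper's sequence also passes them through Serre-duality moves), but neither affects the strategy.
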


We shall compare the semiorthogonal decomposition from Lemma \ref{lemma-DbtX}
with that of \eqref{DbtX2} via a sequence of mutations. Recall that given a
semiorthogonal decomposition
\[ \mathcal{T} = \langle \mathcal{A}_1, \mathcal{A}_2, \ldots, \mathcal{A}_n \rangle \]
of a triangulated category with admissible components, there are functors
\(\mathbf{L}_{\mathcal{A}_i}, \mathbf{R}_{\mathcal{A}_j} \colon \mathcal{T} \to \mathcal{T}\),
for \(1 \leq i \leq n-1\) and \(2 \leq j \leq n\), which give
semiorthogonal decompositions of \(\mathcal{T}\) of the form
\[
  \langle
    \mathcal{A}_1,
    \ldots,
    \mathcal{A}_{i-1},
    \mathbf{L}_{\mathcal{A}_i}(\mathcal{A}_{i+1}),
    \mathcal{A}_i,
    \ldots,
    \mathcal{A}_n
  \rangle
  \;\;\;\text{and}\;\;\;
  \langle
    \mathcal{A}_1,
    \ldots,
    \mathcal{A}_j,
    \mathbf{R}_{\mathcal{A}_j}(\mathcal{A}_{j-1}),
    \mathcal{A}_{j+1},
    \ldots,
    \mathcal{A}_n
  \rangle,
\]
called the \emph{left mutation through \(\mathcal{A}_i\)} and
\emph{right mutation through \(\mathcal{A}_j\)}, respectively.
The following summarizes some basic properties of these functors that we will
freely use below; see~\cite{Bondal:mutation, BK:mutation, Kuz:cubic} for
details.

\begin{lemma}
\label{lemma-mutations}
Let $\mathcal{T} = \langle \mathcal{A}_1, \mathcal{A}_2, \ldots, \mathcal{A}_n \rangle$ be a semiorthogonal
decomposition with admissible components.
\begin{enumerate}
\item If \(\mathcal{A}_k\) and \(\mathcal{A}_{k+1}\)
are completely orthogonal, meaning $\Hom^\bullet(E, F) = 0$ for
\(E \in \mathcal{A}_{k}\) and \(F \in \mathcal{A}_{k+1}\), then
\[
  \mathbf{L}_{\mathcal{A}_k}(\mathcal{A}_{k+1}) = \mathcal{A}_{k+1}
    \quad\text{and}\quad
  \mathbf{R}_{\mathcal{A}_{k+1}}(\mathcal{A}_k) = \mathcal{A}_k .
\]
\item If $\cA_{k}$ is generated by an exceptional object $E$, then the
  associated mutation functors $\mathbf{L}_{E}$ and $\mathbf{R}_{E}$ are given by
\begin{equation*}
  \mathbf{L}_E(F) = \mathrm{Cone}(\Hom^\bullet(E,F) \otimes E \to F)
  ~ \text{ and } ~
  \mathbf{R}_E(F) = \mathrm{Cone}(F \to \Hom^\bullet(F,E)^\vee \otimes E)[-1].
\end{equation*}

\item If \(\mathcal{T} = \Db(Y)\) for a smooth projective variety \(Y\), then
\begin{equation*}
\mathbf{L}_{\langle \cA_1, \dots, \cA_{n-1} \rangle}(\cA_n) = \cA_n \otimes \omega_Y
  \quad\text{and}\quad
\mathbf{R}_{\langle \cA_2, \dots, \cA_{n} \rangle}(\cA_1) = \cA_1 \otimes \omega_Y^{-1}. 
\end{equation*}
\end{enumerate}
\end{lemma}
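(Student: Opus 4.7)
The plan is to verify each item by unwinding the definition of mutation in terms of the projection functor associated with an admissible subcategory. Recall that for admissible $\mathcal{B} \subset \mathcal{T}$, the inclusion has both adjoints, giving a projection $p_{\mathcal{B}} \colon \mathcal{T} \to \mathcal{B}$, and the left mutation $\mathbf{L}_{\mathcal{B}}(F)$ is defined as the cone of the canonical map $p_{\mathcal{B}}(F) \to F$, while $\mathbf{R}_{\mathcal{B}}(F)$ is defined by the dual construction built from the unit $F \to p_{\mathcal{B}}(F)$.

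For part (i), the plan is to observe that if $F \in \mathcal{A}_{k+1}$ and the two subcategories are completely orthogonal, then $\Hom^\bullet(E, F) = 0$ for every $E \in \mathcal{A}_k$, which by the universal property of the projection forces $p_{\mathcal{A}_k}(F) = 0$; the defining triangle then collapses to yield $\mathbf{L}_{\mathcal{A}_k}(F) \simeq F$, and the symmetric argument handles the right mutation. For part (ii), the approach is to check directly that $p_{\mathcal{A}_k}(F) = \Hom^\bullet(E, F) \otimes E$ when $\mathcal{A}_k = \langle E \rangle$ is generated by an exceptional object: this object visibly lies in $\langle E \rangle$, and the evaluation map $\Hom^\bullet(E, F) \otimes E \to F$ induces an isomorphism on $\Hom^\bullet(E, -)$ because $\Hom^\bullet(E, E) = k$ by exceptionality. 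Substituting into the defining triangles then produces the stated cone formulas, with the right-mutation analogue obtained via the projection $\Hom^\bullet(F, E)^\vee \otimes E$.

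The slightly more substantial step is part (iii). Here I would show that mutating $\mathcal{A}_n$ all the way through $\langle \mathcal{A}_1, \dots, \mathcal{A}_{n-1}\rangle$ produces the left orthogonal ${}^{\perp}\langle \mathcal{A}_1, \dots, \mathcal{A}_{n-1}\rangle$, and then identify this orthogonal with $\mathcal{A}_n \otimes \omega_Y$ by means of Serre duality. Concretely, for any $F \in \mathcal{A}_n$ and $G \in \mathcal{A}_i$ with $i < n$,
\[
\Hom^\bullet(G, F \otimes \omega_Y[\dim Y]) \cong \Hom^\bullet(F, G)^\vee = 0
\]
by Serre duality and semiorthogonality, so $\mathcal{A}_n \otimes \omega_Y \subset {}^{\perp}\langle \mathcal{A}_1, \dots, \mathcal{A}_{n-1}\rangle$. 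Since both subcategories are admissible complements of $\langle \mathcal{A}_1, \dots, \mathcal{A}_{n-1}\rangle$ in the sense of semiorthogonal decompositions, they must coincide. The statement for the right mutation is dual and proceeds with $\omega_Y^{-1}$.

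The main obstacle, such as it is, is purely bookkeeping in part (iii): the statement is an equality of subcategories, which are automatically shift-invariant, so no cohomological shift appears even though the Serre functor $S_Y = (-) \otimes \omega_Y[\dim Y]$ carries a $[\dim Y]$ intrinsically. Beyond this convention check, everything follows from standard material in the already-cited foundational references of Bondal, Bondal--Kapranov, and Kuznetsov.
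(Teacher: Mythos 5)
Your proposal is correct; note that the paper does not prove this lemma at all but simply records it as standard, deferring to the cited references of Bondal, Bondal--Kapranov, and Kuznetsov, and your sketch reproduces exactly the standard arguments found there (vanishing of the projection functor for (i), identification of the projection onto $\langle E\rangle$ with $\Hom^\bullet(E,F)\otimes E$ for (ii), and the Serre-duality comparison of orthogonal complements for (iii)). One cosmetic point in (iii): with the paper's convention that Homs in a semiorthogonal decomposition go from right to left, the subcategory obtained by mutating $\cA_n$ to the far left is the \emph{right} orthogonal $\langle \cA_1,\dots,\cA_{n-1}\rangle^{\perp}$ rather than ${}^{\perp}\langle \cA_1,\dots,\cA_{n-1}\rangle$; your displayed Serre-duality computation $\Hom^\bullet(G, F\otimes\omega_Y[\dim Y])\cong \Hom^\bullet(F,G)^\vee=0$ verifies precisely that (correct) orthogonality condition, so only the label, not the argument, needs adjusting.
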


We now proceed with the proof of Proposition~\ref{prop-Ku-Cl}. Using the
standard Beilinson decomposition for the derived category of projective space,
the categories to the right of  $\Db(\PP\widebar{V}, \mathcal{B}_0)$ in~\eqref{DbtX2}
are generated by the exceptional collection
\[
  \tag*{\(\circled{0}\)}
  \langle
  \sO_{\widetilde X}(-h), \sO_{\widetilde X},     \sO_{\widetilde X}(h), \sO_{\widetilde X}(2h),
  \sO_{\widetilde X}(H),  \sO_{\widetilde X}(H + h), \sO_{\widetilde X}(H + 2h), \sO_{\widetilde X}(H + 3h)
  \rangle.
\]
Similarly, after mutating $\tKu(X)$ to the far left of the decomposition from Lemma~\ref{lemma-DbtX},
the categories to its right are generated by the exceptional collection
\[
  \tag*{\circled{8}}
  \langle
  i_*\sO_Z(2Z-2H), i_*\sO_Z(2Z -H), i_*\sO_Z(Z), i_*\sO_Z(Z + H),
  \sO_{\widetilde X}, \sO_{\widetilde X}(H), \sO_{\widetilde X}(2H), \sO_{\widetilde X}(3H)
  \rangle .
\]
It suffices to find a sequence of mutations which takes the
exceptional collection  \(\circled{0}\) to  \(\circled{8}\).
We explain the steps below; see also Figure~\ref{mutation-summary} for a summary.

\begin{figure}
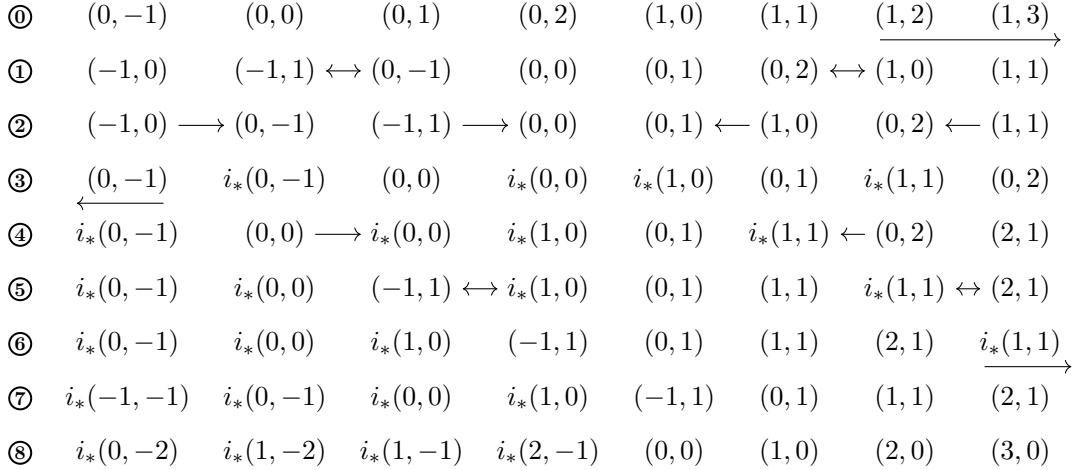

  \mutation
  \caption{\small%
  This diagram summarizes the sequence of mutations we perform to
  transform the exceptional collection coming from~\eqref{DbtX2}
to the exceptional collection coming from
  the decomposition of Lemma~\ref{lemma-DbtX}.
  The symbol \((a,b)\) represents the object \(\sO_{\widetilde X}(aH + bh)\), and
  \(i_*(a,b)\) represents the object \(i_*\sO_Z(aH + bh)\).
  An arrow indicates that the next row is obtained by mutating the object at
  the tail of the arrow through the object at the head of the arrow.
  Arrows that point off the sides indicate the application of Serre duality
  to flip underlined objects to the other side.}
  \label{mutation-summary}
\end{figure}

\smallskip
\noindent\textbf{Step 1.}
Mutate the subcategory
\(\langle \sO_{\widetilde X}(H + 2h), \sO_{\widetilde X}(H + 3h) \rangle\) on the
right end of \(\circled{0}\) to the far left.
Left mutating \(\Db(\PP\widebar{V},\mathcal{B}_0)\) through the resulting
category \(\langle \sO_{\widetilde X}(-H), \sO_{\widetilde X}(-H + h) \rangle\) results
in the decomposition with exceptional objects
\[
  \tag*{\circled{1}}
  \langle
  \sO_{\widetilde X}(-H), \sO_{\widetilde X}(-H + h),
  \sO_{\widetilde X}(-h), \sO_{\widetilde X},     \sO_{\widetilde X}(h), \sO_{\widetilde X}(2h),
  \sO_{\widetilde X}(H),  \sO_{\widetilde X}(H + h)
  \rangle.
\]

\noindent\textbf{Step 2.}
The pairs \(\langle \sO_{\widetilde X}(-H + h), \sO_{\widetilde X}(-h) \rangle\) and
\(\langle \sO_{\widetilde X}(2h), \sO_{\widetilde X}(H) \rangle\) are completely orthogonal.
Indeed, by Lemma~\ref{quadric-bundle} we have
\[
\pi_*\sO_{\tX}(H-2h) \cong
\mathcal{E}^\vee(-2h) \cong
\sO_{\PP\widebar{V}}(-h) \oplus \sO_{\PP\widebar{V}}(-2h)^{\oplus 2} \oplus \sO_{\PP\widebar{V}}(-3h) ,
\]
and hence $\rH^\bullet(\sO_{\tX}(H-2h)) = 0$.
Simultaneously transposing these pairs yield the collection
\[
  \tag*{\circled{2}}
  \langle
  \sO_{\widetilde X}(-H), \sO_{\widetilde X}(-h),
  \sO_{\widetilde X}(-H + h),
  \sO_{\widetilde X},     \sO_{\widetilde X}(h), \sO_{\widetilde X}(H),
  \sO_{\widetilde X}(2h), \sO_{\widetilde X}(H + h)
  \rangle.
\]

\noindent\textbf{Step 3.}
Simultaneously perform right mutations in the pairs
\begin{equation*}
\langle \sO_{\widetilde X}(-H), \sO_{\widetilde X}(-h)\rangle \quad \text{and} \quad
\langle \sO_{\widetilde X}(-H + h), \sO_{\widetilde X} \rangle,
\end{equation*}
and left mutations in the pairs
\begin{equation*}
\langle \sO_{\widetilde X}(h), \sO_{\widetilde X}(H) \rangle \quad \text{and} \quad
\langle \sO_{\widetilde X}(2h), \sO_{\widetilde X}(H + h)\rangle.
\end{equation*}
For each pair, the space of morphisms from the left object to the right is
$\rH^\bullet(\sO_{\widetilde X}(H - h)) = k[0]$, which can be computed as in
the previous step.
The nonzero section corresponds to the equation of $Z = H-h$.
Thus each mutation gives the structure sheaf of \(Z\), possibly with a twist;
for example,
\[
  \mathbf{L}_{\sO_{\widetilde X}(h)}(\sO_{\widetilde X}(H)) \coloneqq
  \mathrm{Cone}(\Hom^\bullet(\sO_{\widetilde X}(h), \sO_{\widetilde X}(H)) \otimes \sO_{\widetilde X}(h) \to \sO_{\widetilde X}(H)) \cong
  i_*\sO_Z(H).
\]
Similarly computing for the others finally yields the collection
\[
\tag*{\circled{3}}
\langle
\sO_{\widetilde X}(-h), i_*\sO_{Z}(- h), \sO_{\widetilde X}, i_*\sO_{Z},     i_*\sO_{Z}(H), \sO_{\widetilde X}(h),
i_*\sO_{Z}(H+h),  \sO_{\widetilde X}(2h)
\rangle.
\]

\noindent\textbf{Step 4.}
Right mutate \(\Db(\PP\widebar{V}, \mathcal{B}_0)\) through
\(\sO_{\widetilde X}(-h)\), and then mutate
\(\sO_{\widetilde X}(-h)\) to the far right side, resulting in
\[
\tag*{\circled{4}}
\langle
i_*\sO_{Z}(- h), \sO_{\widetilde X}, i_*\sO_{Z},     i_*\sO_{Z}(H), \sO_{\widetilde X}(h),
i_*\sO_{Z}(H+h),  \sO_{\widetilde X}(2h), \sO_{\widetilde X}(2H + h)
\rangle.
\]

\noindent\textbf{Step 5.}
Simultaneously right mutate \(\sO_{\widetilde X}\) through \(i_*\sO_Z\), and
left mutate \(\sO_{\widetilde X}(2h)\) through \(i_*\sO_Z(H+h)\).
For the right mutation, we have
\[
  \Hom^\bullet(\sO_{\widetilde X}, i_*\sO_Z)
  = \rH^\bullet(\sO_Z)
  = \rH^\bullet(\sO_E \oplus \sO_E(-H-h)) = k[0]
\]
since \(Z \to E \cong \PP\widebar{V} \times L\) is a double cover branched
along a divisor of class \(2H + 2h\) by Lemma
\ref{exceptional-divisor-Z}\ref{exceptional-divisor-Z.double-cover}. Thus the
only morphism is the canonical quotient map, so
\[
\mathbf{R}_{i_*\sO_Z}(\sO_{\widetilde X})
= \sO_{\widetilde X}(-Z)
= \sO_{\widetilde X}(-H + h),
\]
using Lemma~\ref{exceptional-divisor-Z}\ref{exceptional-divisor-Z.divisor-class}.
Likewise, for the left mutation, by Grothendieck duality for $i \colon Z \to \tX$,
\begin{align*}
\Hom^\bullet(i_*\sO_Z(H+h), \sO_{\widetilde X}(2h))
& = \Hom^\bullet(\sO_Z(H+h), \sO_Z(Z + 2h)[-1]) \\
& = \rH^\bullet(\sO_Z[-1]) \\
& = k[-1].
\end{align*}
This map corresponds to the exact sequence
\begin{equation*}
0 \to \sO_{\tX}(2h) \to \sO_{\tX}(H+h) \to i_*\sO_Z(H+h) \to 0,
\end{equation*}
and therefore
\[ \mathbf{L}_{i_*\sO_Z(H+h)}(\sO_{\widetilde X}(2h)) = \sO_{\widetilde X}(H + h). \]
In total, the exceptional collection has now become
\[
\tag*{\circled{5}}
\langle
i_*\sO_{Z}(-h), i_*\sO_{Z}, \sO_{\widetilde X}(-H+h), i_*\sO_{Z}(H), \sO_{\widetilde X}(h), \sO_{\widetilde X}(H + h),
i_*\sO_{Z}(H+h), \sO_{\widetilde X}(2H + h)
\rangle.
\]

\noindent\textbf{Step 6.}
As \(Z \to E\) is a double cover branched along a divisor of class \(2H+2h\),
\[
\mathrm{H}^\bullet(\sO_Z(2H-h))
= \mathrm{H}^\bullet(\sO_E(2H-h) \oplus \sO_E(H-2h))
= 0
\]
since the summands in the middle restrict to \(\sO(-1)\) and \(\sO(-2)\)
along the fibers of the \(\PP^3\)-bundle \(E \to L\). Thus a computation as in
the previous step shows that the pairs
\(\langle \sO_{\widetilde X}(-H+h), i_*\sO_Z(H)\rangle\) and
\(\langle i_*\sO_Z(H+h), \sO_{\widetilde X}(2H + h) \rangle\) are completely
orthogonal.
Transposing the objects in each pair results in the collection
\[
\tag*{\circled{6}}
\langle
i_*\sO_{Z}(-h), i_*\sO_{Z}, i_*\sO_Z(H), \sO_{\widetilde X}(-H+h), \sO_{\widetilde X}(h), \sO_{\widetilde X}(H + h),
\sO_{\widetilde X}(2H + h), i_*\sO_{Z}(H+h)
\rangle.
\]

\noindent\textbf{Step 7.}
Mutate \(i_*\sO_Z(H + h)\) to the far left, and left mutate \(\Db(\PP\widebar{V},\mathcal{B}_0)\) through
the resulting object \(i_*\sO_Z(-H-h)\).
This yields the collection
\[
\tag*{\circled{7}}
\langle
i_*\sO_{Z}(-H-h),
i_*\sO_{Z}(-h), i_*\sO_{Z}, i_*\sO_Z(H), \sO_{\widetilde X}(-H+h), \sO_{\widetilde X}(h), \sO_{\widetilde X}(H + h),
\sO_{\widetilde X}(2H + h)  \rangle.
\]

\noindent\textbf{Step 8.}
Finally, twist the collection \(\circled{7}\) by \(\sO_{\widetilde X}(H-h)\).
The resulting collection is exactly that appearing in \(\circled{8}\) above.
This completes the proof of Proposition~\ref{prop-Ku-Cl}.
\(\hfill\qed\)

\subsection{A twisted Calabi--Yau threefold}
\label{CY3}
Combining Lemma~\ref{lemma-DbtX} and Proposition~\ref{prop-Ku-Cl} shows that
$\Ku(X)$ admits a crepant resolution by the category
$\Db(\PP\widebar{V}, \mathcal{B}_0)$. Thus, to prove
Theorem~\ref{main-theorem}, it suffices to
identify $\Db(\PP\widebar{V}, \mathcal{B}_0)$ with the twisted derived category
of a non-projective Calabi--Yau threefold, where the twist is by a nontrivial Brauer class. 
Below, we show this 
using the results of~\cite{Kuz:lines}, 
modulo the non-projectivity of the threefold which we show in \S\ref{general-non-projective}. 

It will be convenient to work more generally and to consider any quadric
surface bundle \(Q \to \PP\widebar{V}\) of class \(2H+2h\) in
\(\PP\mathcal{E}\) with smooth total space and discriminant locus \(D\) an
octic surface with singular locus \(D_0\) consisting of only \(72\) nodes, as in Lemma
\ref{discr}\ref{discr.sings}; call any such quadric surface bundle \emph{good}.
Writing \(\mathcal{B}_0\) for its sheaf of even Clifford algebras on
\(\PP\widebar{V}\), we have:

\begin{lemma}
\label{general-rationality}
If \(Q \to \PP\widebar{V}\) is good, then there is an equivalence of categories
\[
\Db(\PP\widebar{V}, \mathcal{B}_0) \simeq
\Db(W^+,\mathcal{A}^+)
\]
where \(W^+\) is a \(3\)-dimensional Calabi--Yau algebraic space with an
Azumaya algebra \(\mathcal{A}^+\). Moreover, the Brauer class of
\(\mathcal{A}^+\) is trivial if and only if \(Q \to \PP\widebar{V}\) has a rational 
section.
\end{lemma}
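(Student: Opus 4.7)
The plan is to apply the results of \cite{Kuz:lines} relating $\Db(\PP\widebar{V}, \mathcal{B}_0)$ to a twisted derived category on a small resolution of the discriminant double cover of the quadric fibration. First I would construct the Calabi--Yau algebraic space $W^+$: let $\widehat{W} \to \PP\widebar{V}$ denote the double cover branched along $D$. Since $D$ is an octic (hence anticanonical) surface in $\PP\widebar{V} \cong \PP^3$, adjunction gives $\omega_{\widehat{W}} \cong \sO_{\widehat{W}}$. Because $D$ has exactly $72$ nodes by Lemma~\ref{discr}\ref{discr.sings} and is otherwise smooth, $\widehat{W}$ is a threefold with $72$ nodes sitting over those of $D$. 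A small resolution $W^+ \to \widehat{W}$ of these nodes exists in the category of algebraic spaces and preserves triviality of the canonical bundle, producing the desired Calabi--Yau threefold; non-projectivity is handled separately in \S\ref{general-non-projective}.

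Next I would invoke \cite{Kuz:lines} to produce the Azumaya algebra $\mathcal{A}^+$ on $W^+$ and the equivalence $\Db(\PP\widebar{V}, \mathcal{B}_0) \simeq \Db(W^+, \mathcal{A}^+)$. Geometrically, $\widehat{W}$ parametrizes rulings on the quadric surface fibers of $Q$: the two rulings over a smooth fiber are swapped by the covering involution, collide over the corank-$1$ part of $D$, and become genuinely singular over the $72$ corank-$2$ points, which is precisely why one must pass to a small resolution. Once the rulings are separated on $W^+$, the sheaf $\mathcal{B}_0$ pushes forward to an honest sheaf of Azumaya algebras on $W^+$, and the desired derived equivalence is realized by a Fourier--Mukai kernel built from the relative incidence correspondence of lines in the fibers of $Q$.

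Finally, the Brauer class of $\mathcal{A}^+$ is trivial if and only if $Q \to \PP\widebar{V}$ admits a rational section. This rests on the classical fact that, for a rank-$4$ quadratic form over a function field, the even Clifford algebra splits over its discriminant extension precisely when the form has a rational isotropic vector; applied to the generic fiber of $Q$, this translates rational sections into generic splittings of $\mathcal{A}^+$, and purity of the Brauer group on the smooth algebraic space $W^+$ then promotes generic splittings to genuine triviality of the Brauer class. The main obstacle is ensuring that \cite{Kuz:lines} applies over the nodal discriminant $D$ and yields an honest Azumaya algebra on the small resolution $W^+$ rather than a more singular sheaf of orders; the ``good'' hypothesis on $Q \to \PP\widebar{V}$ (in particular, the isolatedness of the corank-$2$ locus and the nodal nature of its image in $D$) is precisely what is needed to make this step go through.
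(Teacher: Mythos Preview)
Your proposal is correct and follows the paper's approach for the main equivalence: both construct the discriminant double cover, pass to a small resolution, and invoke \cite{Kuz:lines}. One point of care: you first pick an \emph{arbitrary} small resolution $W^+$ and then ask \cite{Kuz:lines} to supply $\mathcal{A}^+$ on it, whereas the paper (and \cite{Kuz:lines}) construct a \emph{specific} $W^+$ via the flip $M \dashrightarrow M^+$ of the relative Fano scheme of lines, so that $M^+ \to W^+$ is a $\PP^1$-bundle whose Brauer class is $\mathcal{A}^+$. Your description of $\mathcal{A}^+$ as ``$\mathcal{B}_0$ pushed forward'' is not literally how \cite{Kuz:lines} builds it, though it is Morita-equivalent to the correct thing.

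The Brauer-class step is where you genuinely diverge. The paper argues geometrically: triviality of $[\mathcal{A}^+]$ is equivalent to a rational section of the $\PP^1$-bundle $M^+ \to W^+$, hence of $M \to W$; such a section picks out one line in each ruling of the generic fiber of $Q$, and their intersection is a rational point, while conversely a rational point of $Q$ determines the two lines through it. You instead invoke the classical splitting criterion for the even Clifford algebra of a rank-$4$ form over the discriminant extension, together with purity of the Brauer group on $W^+$. Both arguments are valid; yours is cleaner algebraically but imports purity, while the paper's is entirely self-contained and makes the geometry of the equivalence explicit.
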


\begin{proof}
This can be extracted directly from the results of \cite{Kuz:lines}. Choices,
however, need to be made in the construction of \(W^+\), and we will require a
specific set of choices later on, so we detail the construction here:

Let \(\mu \colon M \to \PP\widebar{V}\) be the relative Fano scheme of lines of
\(Q \to \PP\widebar{V}\). The fiber of \(\mu\) over a point of
\(\PP\widebar{V} \setminus D\) is $\PP^1 \sqcup \PP^1$, corresponding to the
two rulings on the smooth quadric surface fiber of \(Q \to \PP\widebar{V}\);
over a point of \(D \setminus D_0\), it is a smooth conic with multiplicity
\(2\); and over a point of \(D_0\), it is two copies of \(\PP^2\) glued at
a single point. Thus the Stein factorization of $\mu$ gives a double covering
\(\tau \colon W \to \PP\widebar{V}\) branched along \(D\). When \(Q \to
\PP\widebar{V}\) is good, then \(W\) has only \(72\) nodes as singularities,
exactly over \(D_0\).

Now~\cite[Section 4]{Kuz:lines} gives a diagram
\[
  \begin{tikzcd}
    M \rar[leftrightarrow, dashed,"\text{flip}"'] \dar & M^+ \dar \\
    W & \lar W^+
  \end{tikzcd}
\]
where \(M^+\) is an algebraic space obtained as a flip of \(M\), and
\(W^+ \to W\) is a small resolution of singularities. Since \(D\) is an octic,
\(W\) and \(W^+\) have trivial canonical bundles. By~\cite[Propositions 4.4 and
5.5]{Kuz:lines}, the morphism \(M^+ \to W^+\) is a \(\PP^1\)-bundle with Brauer
class represented by an explicit Azumaya algebra \(\mathcal{A}^+\) on \(W^+\).
The calculations on \cite[p.670]{Kuz:lines} then give the equivalence of
categories
\[
\Db(\PP\widebar{V},\mathcal{B}_0) \simeq
\Db(W^+,\mathcal{A}^+).
\]

The Brauer class of \(\mathcal{A}^+\) vanishes if and only if the
\(\PP^1\)-bundle \(M^+ \to W^+\) admits a rational section. Upon undoing the
birational modifications, this is equivalent to \(M \to W\) admitting a
rational section. Since a rational section of \(M \to W\) gives one line
in each of the two rulings on the generic fiber of \(Q \to \PP\widebar{V}\),
taking their intersection gives a rational section of the quadric surface
bundle \(Q \to \PP\widebar{V}\). Conversely, a rational section of
\(Q \to \PP\widebar{V}\) distinguishes the unique pair of lines
passing through this point, and thus gives a rational section of \(M \to W\).
\end{proof}

Suppose furthermore that the good quadric bundle \(Q \to \PP\widebar{V}\)
arises as a double cover \(Q \to \PP\mathcal{F}\) of a \(\PP^2\)-bundle
\(\PP\mathcal{F} \to \PP\widebar{V}\) branched over a conic bundle
\(C \subset \PP\mathcal{F}\) of class \(2H + 2h\). This is the case, for
instance, with the quadric surface bundles \(\widetilde{X} \to \PP\widebar{V}\)
arising from quartics fourfolds \(Y\) from the linear system \(\mathfrak{a}\).
In this case, we can show that the Brauer class of the Azumaya algebra
\(\mathcal{A}^+\) appearing above is nontrivial:

\begin{lemma}\label{no-rational-section}
In the situation above, \(Q \to \PP\widebar{V}\) does not admit a rational
section, and thus the Brauer class of $\cA^{+}$ is nontrivial. 
\end{lemma}

\begin{proof}
By the closing arguments of Lemma \ref{general-rationality}, it suffices
to show that \(M \to W\) does not admit a rational section. Begin by observing
that \(M\) is birational over $W$ to the pullback \(C_W \to W\) of the
conic bundle \(C \to \PP\widebar{V}\). Work over the generic points
\(\xi \in \PP\widebar{V}\) and \(\eta \in W\). Points of \(M_\eta\) over
\(\eta\) consist of a choice of ruling of the quadric surface \(Q_\xi\) and a
line \(\ell\) in that ruling. The double cover \(Q_\xi \to \PP\mathcal{F}_\xi\)
is induced by linear projection from a point outside of \(Q_\xi\), so the line
\(\ell\) is mapped to a line \(\ell'\) in the plane \(\PP\mathcal{F}_\xi\). The
ramification locus of the cover is a hyperplane section of \(Q_\xi\), and so
\(\ell'\) intersects the branch locus \(C_\xi\) at a single point: In other
words, \(\ell'\) must be a tangent line to \(C_\xi\). This gives a
morphism from \(M_\eta\) to the projective dual of \(C_\xi\), which may be
identified via the Gauss map with \(C_\xi\) itself. Since only one line from
each ruling passes through a given point of a quadric surface, this implies
that the induced morphism \(M_\eta \to C_\eta\) upon base change to \(\eta\) is
an isomorphism.

It remains to show that the conic bundle \(C_W \to W\) does not
admit a rational section. Since \(W \to \PP\widebar{V}\) is a finite morphism,
\(C_W\) is an ample divisor in the \(\PP^2\)-bundle \(\PP\mathcal{F}_W\). Up to
verifying genericity hypotheses for \(C_W\), the version of the
Grothendieck--Lefschetz theorem and its proof in \cite[Theorem 1]{RS:GL} shows
that the restriction map
\[
\operatorname{Cl}(\PP\mathcal{F}_W) \to
\operatorname{Cl}(C_W)
\]
on divisor class groups is an isomorphism,
at which point the result follows since the degree over \(W\) of all
divisors on \(C_W\) is even, whence there can be no rational section. The
genericity conditions required on \(C_W\) may be extracted from analyzing the
proof and can be found in \cite[pp.564, 572, and 577]{RS:GL}. They are as
follows:
\begin{enumerate}
\item\label{no-rational-section.singular-locus}
\(\operatorname{Sing}(C_W) = \operatorname{Sing}(\PP\mathcal{F}_W) \cap C_W\); and
\item\label{no-rational-section.exceptionals}
the exceptional divisors in the resolution of \(\PP\mathcal{F}_W\) which are
contracted to points and curves are disjoint from the strict transform of
\(C_W\).
\end{enumerate}
Point \ref{no-rational-section.singular-locus} is clear, since the singular
loci are but the fibers over the nodes of \(W\). Point
\ref{no-rational-section.exceptionals} is because the singularities of
\(\PP\mathcal{F}_W\) are products of a node and \(\PP^2\), are resolved upon a
single blow up, and so every exceptional divisor contracts to a center of
dimension at least \(2\).
\end{proof}

\begin{remark}
\label{remark-nonvanishing}
Although Lemma \ref{no-rational-section} applies only to a special family of
good quadric bundles, it seems reasonable to expect that the same should hold
true for all such quadrics: namely, we expect that no good quadric bundle can
admit a rational section.
Indeed, the exponential sequence shows that the Brauer group of a strict
(meaning that $h^{i}(\sO) = 0$ for $0 < i < n$) complex Calabi--Yau \(n\)-fold
is nothing but the torsion in degree \(3\) cohomology when \(n \geq 3\), and
hence a topological invariant. Thus, if it were possible to construct the pair
$(W^{+}, \cA^{+})$ in families, then it would follow that the Brauer class of
$\cA^{+}$ either vanishes everywhere or nowhere, and by
Lemma~\ref{general-rationality} the vanishing is equivalent to the existence of
rational section of $Q \to \PP\widebar{V}$. This argument is not complete,
because a priori it is not clear that the choice of the small resolution of
nodal singularities in the construction of $(W^+, \cA^{+})$ can be made
simultaneously in a family.
\end{remark}

\subsection{Non-projectivity}\label{general-non-projective}
To complete the proof of Theorem~\ref{main-theorem}, we 
just need to show that the Calabi--Yau algebraic space \(W^+\)
appearing in \S\ref{CY3} is not projective. 
We show that it has no ample
divisors via the following non-projectivity criterion:

\begin{lemma}\label{non-projectivity-criterion}
Let \(T\) be a projective threefold with only nodal singularities.
If \(\delta \coloneqq b_4(T) - b_2(T)\) vanishes, then no small
resolution of \(T\) can be projective.
\end{lemma}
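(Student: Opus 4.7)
The plan is to derive the Betti number identity $b_{2}(T') = b_{2}(T) + \delta$ for any small resolution $\pi \colon T' \to T$, and then to contradict projectivity when $\delta = 0$ by pairing the exceptional curves against ample classes via the projection formula.

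I would first pin down the cohomological comparison. Fix a small resolution $\pi \colon T' \to T$ with exceptional fibers $C_{1}, \ldots, C_{N} \cong \mathbf{P}^{1}$, one over each node $p_{i}$, and set $C \coloneqq \bigsqcup_{i} C_{i}$ and $A \coloneqq \{p_{1}, \ldots, p_{N}\} \subset T$. Since $\pi$ restricts to a homeomorphism $T' \setminus C \xrightarrow{\sim} T \setminus A$, excision gives an isomorphism $H^{i}(T, A; \mathbf{Q}) \cong H^{i}(T', C; \mathbf{Q})$ for all $i$, and since $A$ is $0$-dimensional the left-hand side equals $H^{i}(T; \mathbf{Q})$ for $i \geq 2$. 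Feeding this into the long exact sequence of the pair $(T', C)$, and using that $H^{\bullet}(C; \mathbf{Q}) = \mathbf{Q}^{N}$ is concentrated in degrees $0$ and $2$, yields
\[
0 \to H^{2}(T) \to H^{2}(T') \to \mathbf{Q}^{N} \to H^{3}(T) \to H^{3}(T') \to 0 \quad\text{and}\quad H^{4}(T) \xrightarrow{\;\sim\;} H^{4}(T').
\]
Poincaré duality on the smooth threefold $T'$ then gives $b_{2}(T') = b_{4}(T') = b_{4}(T) = b_{2}(T) + \delta$, as desired.

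With this identity at hand, suppose for contradiction that $\delta = 0$ and that some small resolution $T' \to T$ is projective, with ample line bundle $L$. The identity forces the injection $\pi^{*} \colon H^{2}(T; \mathbf{Q}) \to H^{2}(T'; \mathbf{Q})$ to be an isomorphism, so $c_{1}(L) = \pi^{*}\alpha$ for some $\alpha \in H^{2}(T; \mathbf{Q})$. The projection formula then gives
\[
L \cdot C_{i} \;=\; c_{1}(L) \cap [C_{i}] \;=\; \pi^{*}\alpha \cap [C_{i}] \;=\; \alpha \cap \pi_{*}[C_{i}] \;=\; 0,
\]
since $\pi(C_{i})$ is a point and hence $\pi_{*}[C_{i}] = 0$; this contradicts the ampleness condition $L \cdot C_{i} > 0$. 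The main obstacle is the Betti comparison in the second paragraph, and specifically the identification $H^{i}(T, A) \cong H^{i}(T', C)$, which can alternatively be deduced from the decomposition theorem exhibiting $R\pi_{*}\mathbf{Q}_{T'}$ as $\mathbf{Q}_{T}$ plus shifted skyscrapers at the nodes; once the Betti relation is in place, the contradiction follows from a one-line application of the projection formula.
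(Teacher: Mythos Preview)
Your proof is correct and follows essentially the same route as the paper's: both derive the exact sequence $0 \to H^{2}(T) \to H^{2}(T') \to \bigoplus_i H^{2}(C_i) \to H^{3}(T) \to H^{3}(T') \to 0$ together with $H^{4}(T) \cong H^{4}(T')$ from the cohomology of the pair $(T',C)$, and then use Poincar\'e duality on $T'$ to see that $\delta = 0$ forces every class in $H^{2}(T')$ to pair trivially with the $C_i$. Your phrasing of the contradiction via the projection formula is equivalent to the paper's observation that the restriction map $H^{2}(T') \to \bigoplus_i H^{2}(C_i)$ vanishes, so the $C_i$ are numerically trivial.
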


\begin{proof}
Let \(T^+ \to T\) be any small resolution of \(T\). Cohomology of the pair
\(T^+\) with the union of its exceptional curves \(C_1,\ldots,C_n\) gives a
sequence
\[
0 \to
\mathrm{H}^2(T) \to
\mathrm{H}^2(T^+) \to
\bigoplus\nolimits_{i = 1}^n \mathrm{H}^2(C_i) \to
\mathrm{H}^3(T) \to
\mathrm{H}^3(T^+) \to
0
\]
and an isomorphism \(\mathrm{H}^4(T) \cong \mathrm{H}^4(T^+)\). Thus
\(\delta = 0\) if and only if any class in \(\mathrm{H}^2(T^+)\) restricts
trivially to \(\mathrm{H}^2(C_i)\) for all \(i = 1,\ldots,n\). In particular,
each of the curves \(C_i\) are numerically trivial in \(T^+\), and so there
are no ample divisors. See \cite[\S1]{Clemens}, \cite[pp.5--6 and Chapter
III]{Werner}, and \cite[p.43]{Addington:Thesis} for more.
\end{proof}

The integer \(\delta\) is called the \emph{defect} of the nodal threefold.
In the case \(T\) is a double solid, Clemens provides a formula in
\cite[(3.17)]{Clemens} for \(\delta\) in terms of the number of independent
conditions on certain polynomials imposed by the position of the nodes; see
also \cite{Cynk:Defect} for a generalization and an algebraic proof.

\begin{lemma}\label{clemens-defect-formula}
If \(T\) is a double cover of \(\PP^3\) branched along a nodal surface \(B\)
of degree \(2d\), then, writing \(\mathcal{I}\) for the ideal sheaf of the
nodes of \(B\) viewed as a subscheme of \(\PP^3\),
\[
\delta = \dim\mathrm{H}^1(\PP^3,\mathcal{I} \otimes \sO_{\PP^3}(3d-4)).
\]
\end{lemma}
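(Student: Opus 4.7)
The plan is Hodge-theoretic, following the approach of \cite{Clemens}. First construct a large resolution $\widehat{T} \to T$ by blowing up $\PP^3$ at the $n$ nodes $p_1, \ldots, p_n$ of $B$ to get $\sigma \colon \widehat{\PP}^3 \to \PP^3$, taking the strict transform $\widehat{B}$, which is smooth since each node of $B$ is resolved by a single blowup, and then forming the double cover $\widehat{T} \to \widehat{\PP}^3$ branched along $\widehat{B}$. The resulting $\widehat{T}$ is a smooth projective threefold in which each node of $T$ is replaced by a smooth quadric surface $\widehat{E}_i \cong \PP^1 \times \PP^1$; any small resolution $T^+ \to T$ is obtained by contracting one ruling of each $\widehat{E}_i$.

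Next, translate the defect into a concrete cohomological quantity. The exact sequence from the proof of Lemma~\ref{non-projectivity-criterion}, combined with the blowup relation between $\widehat{T}$ and $T^+$, shows that $\delta$ equals the rank of a restriction map from $\mathrm{H}^2(\widehat{T})$ to the sum of the cohomologies of the exceptional divisors. Decomposing $\mathrm{H}^*(\widehat{T})$ into the $\pm 1$ eigenspaces of the covering involution, only the anti-invariant summand contributes, and a Leray--Koszul computation on $\widehat{\PP}^3$ identifies this contribution with the evaluation map
\begin{equation*}
\mathrm{H}^0(\PP^3, \sO_{\PP^3}(3d-4)) \to \bigoplus_{i=1}^n k_{p_i}
\end{equation*}
at the nodes. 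The twist $3d-4$ reflects the interplay between the canonical twist $d-4$ of the double cover and the corrections by $\sum \widehat{E}_i$ appearing in the pullback of $\sO_{\PP^3}(2d)$ to $\widehat{\PP}^3$.

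Finally, the short exact sequence
\begin{equation*}
0 \to \mathcal{I}(3d-4) \to \sO_{\PP^3}(3d-4) \to \bigoplus_{i=1}^n k_{p_i} \to 0,
\end{equation*}
together with the standard vanishing $\mathrm{H}^1(\PP^3, \sO_{\PP^3}(3d-4)) = 0$, identifies the cokernel of the evaluation map with $\mathrm{H}^1(\PP^3, \mathcal{I}(3d-4))$, giving the claim. The main obstacle is the middle step: carefully tracking the degree shifts through the double cover, the exceptional divisors, and their interaction with the covering involution. The algebraic approach of \cite{Cynk:Defect}, which reformulates the question via a local Koszul complex at the nodes, is probably the cleanest route and avoids the need to track Hodge-theoretic data through the small resolutions.
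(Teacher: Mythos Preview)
The paper does not actually prove this lemma: it is stated with a reference to \cite[(3.17)]{Clemens} for the formula and to \cite{Cynk:Defect} for an algebraic generalization, and no argument is given. Your sketch outlines precisely the Clemens approach that the paper cites, and you also point to the Cynk alternative, so your proposal is fully consistent with the paper's treatment.

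One small point of imprecision worth flagging: you write that $\delta$ equals the \emph{rank} of a restriction map out of $\mathrm{H}^2(\widehat{T})$, but then conclude by identifying $\delta$ with the \emph{cokernel} of the evaluation map $\mathrm{H}^0(\PP^3,\sO_{\PP^3}(3d-4)) \to \bigoplus_i k_{p_i}$. Both statements are correct, but they refer to different maps, and the passage between them is exactly the Poincar\'e-duality/eigenspace bookkeeping you yourself flag as ``the main obstacle.'' When you write this up in full, make the switch explicit: the rank of the restriction $\mathrm{H}^2(T^+) \to \bigoplus_i \mathrm{H}^2(C_i)$ (which is $\delta$ by the sequence in Lemma~\ref{non-projectivity-criterion} combined with $b_4(T^+)=b_2(T^+)$) becomes, after passing to $\widehat{T}$ and isolating the anti-invariant summand, the corank of the polynomial evaluation map. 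Otherwise the sketch is sound.
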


In the situation at hand, the nodes of the discriminant surface parameterize
points where a bilinear form has corank at least \(2\), so its ideal sheaf in
\(\PP\widebar{V}\) is locally generated by the minors of a symmetric matrix.
Such an ideal can be resolved as follows:

\begin{lemma}\label{non-projectivity-resolution}
Let \(\varphi \colon \mathcal{V}^\vee \to \mathcal{V} \otimes \mathcal{L}\) be
a symmetric morphism between locally free modules of rank \(r\) on a locally
Noetherian scheme, where $\mathcal{L}$ is a line bundle. Then there is a complex
\[
0 \to
\mathcal{L}^{\vee, \otimes 2} \otimes \wedge^2 \mathcal{V}^\vee \to
\mathcal{L}^\vee \otimes (\mathcal{V}^\vee \otimes \mathcal{V})_0 \to
\Sym^2(\mathcal{V}) \to
\mathcal{I} \otimes \mathcal{L}^{\otimes r-1} \otimes \det(\mathcal{V})^{\otimes 2} \to
0
\]
where
\((\mathcal{V}^\vee \otimes \mathcal{V})_0 \coloneqq \ker(\operatorname{ev} \colon \mathcal{V}^\vee \otimes \mathcal{V} \to \sO_S)\)
and \(\mathcal{I}\) is the sheaf of ideals locally generated by the size
\(r-1\) minors of \(\varphi\). If \(\mathcal{I}\) furthermore has its maximal
depth \(3\), then the complex is exact.
\end{lemma}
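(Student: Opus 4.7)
The plan is to construct the four terms together with their connecting maps from natural tensor operations on $\varphi$, verify that the consecutive composites vanish, and then deduce acyclicity under the depth hypothesis by reducing to the universal case of a generic symmetric matrix.

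First I would define the terminal surjection from the adjugate of $\varphi$. Since $\varphi$ is symmetric, its adjugate is a symmetric morphism $\operatorname{adj}(\varphi) \colon \mathcal{V} \to \mathcal{V}^\vee \otimes \mathcal{L}^{\otimes r-1} \otimes \det(\mathcal{V})^{\otimes 2}$ whose matrix entries are the signed $(r-1) \times (r-1)$ minors of $\varphi$. Symmetry recasts this as a map $\Sym^2(\mathcal{V}) \to \mathcal{L}^{\otimes r-1} \otimes \det(\mathcal{V})^{\otimes 2}$ whose image is by definition $\mathcal{I} \otimes \mathcal{L}^{\otimes r-1} \otimes \det(\mathcal{V})^{\otimes 2}$. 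The middle map is the composite
\[
\mathcal{L}^\vee \otimes (\mathcal{V}^\vee \otimes \mathcal{V})_0 \hookrightarrow \mathcal{L}^\vee \otimes \mathcal{V}^\vee \otimes \mathcal{V} \xrightarrow{\varphi \otimes \mathrm{id}} \mathcal{V} \otimes \mathcal{V} \twoheadrightarrow \Sym^2(\mathcal{V}).
\]
The leftmost map is built analogously: embed $\wedge^2 \mathcal{V}^\vee \hookrightarrow \mathcal{V}^\vee \otimes \mathcal{V}^\vee$ and apply $\varphi$ to the second factor to reach $\mathcal{L}^\vee \otimes \mathcal{V}^\vee \otimes \mathcal{V}$; the antisymmetry of the source paired with the symmetry of $\varphi$ forces the image to land in the traceless subspace $(\mathcal{V}^\vee \otimes \mathcal{V})_0$.

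Vanishing of the two inner composites is a direct check. The composite $\wedge^2\mathcal{V}^\vee \to \Sym^2(\mathcal{V})$ sends a decomposable section $a \wedge b$ to the symmetrization of $\varphi(a) \otimes \varphi(b) - \varphi(b) \otimes \varphi(a)$, which is zero. The composite $(\mathcal{V}^\vee \otimes \mathcal{V})_0 \to \mathcal{I} \otimes \mathcal{L}^{\otimes r-1} \otimes \det(\mathcal{V})^{\otimes 2}$ unfolds via the Cayley--Hamilton identity $\varphi \cdot \operatorname{adj}(\varphi) = \det(\varphi) \cdot \mathrm{id}$ into the trace of a scalar multiple of a traceless tensor, and so vanishes. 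These identities are formal, so the complex is defined over any base and is functorial under base change.

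For exactness, I would reduce to the universal case, where $S$ is the affine space of generic symmetric $r \times r$ matrices, $\mathcal{V}$ is trivial, $\mathcal{L} \cong \sO_S$, and $\varphi$ is the tautological form. There the ideal $\mathcal{I}$ has codimension exactly $3$, and the complex above coincides with Józefiak's classical minimal free resolution of the ideal of sub-maximal minors of a generic symmetric matrix. For an arbitrary base, the complex is the pullback of the universal one; when $\mathcal{I}$ attains its maximal depth $3$, the Buchsbaum--Eisenbud acyclicity criterion guarantees that this pullback remains exact. The main obstacle is organizing the leftmost map in a basis-free way that makes its image visibly traceless and makes base change transparent; once this is in hand, exactness follows essentially formally from Józefiak's theorem and Buchsbaum--Eisenbud.
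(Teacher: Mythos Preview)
Your proposal is correct and follows essentially the same route as the paper: both globalize J\'ozefiak's resolution of the ideal of submaximal minors of a symmetric matrix, with the rightmost differential given by the adjugate (equivalently, $\wedge^{r-1}\varphi$ reinterpreted via the isomorphism $\wedge^{r-1}\mathcal{V}^\vee \cong \mathcal{V} \otimes \det(\mathcal{V})^\vee$), and both invoke the depth hypothesis to pass from the universal case to the given one. The paper is terser, citing the Eagon--Northcott framework for the left-hand portion rather than constructing the maps and checking the composites by hand as you do, but the content is the same.
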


\begin{proof}
Up to twisting by a power of \(\mathcal{L}\), the first three terms
essentially comprise of a generalized Eagon--Northcott complex associated with
\(\varphi \colon \mathcal{V}^\vee \to \mathcal{V} \otimes \mathcal{L}\),
see \cite[\((\mathrm{EN}_2)\) on p.323]{Lazarsfeld:PositivityI}. The point
here is to identify the cokernel on the right, and this is done locally by
\cite[Theorem 3.1]{Joz} and \cite{GT}. Therefore it remains to
globalize J{\'o}zefiak's description of the rightmost differential: View the
map \(\wedge^{r-1} \varphi\) as a bilinear form
\[
\wedge^{r-1} \mathcal{V}^\vee \otimes
\wedge^{r-1} \mathcal{V}^\vee \to
\mathcal{L}^{\otimes r-1}.
\]
Locally, this is given by a matrix consisting of size \(r-1\) minors of
\(\varphi\), so this map is symmetric, and has image the twisted ideal sheaf
\(\mathcal{I} \otimes \mathcal{L}^{\otimes r-1}\). Upon identifying
\(\wedge^{r-1} \mathcal{V}^\vee\) with \(\mathcal{V} \otimes \det(\mathcal{V}^\vee)\)
via the isomorphism induced by wedge products, this gives a surjective map
\[
\Sym^2(\mathcal{V}) \otimes \det(\mathcal{V}^\vee)^{\otimes 2} \to
\mathcal{I} \otimes \mathcal{L}^{\otimes r-1}.
\]
Twisting by \(\det(\mathcal{V})^{\otimes 2}\) gives the desired map.
\end{proof}

These tools now give a straightforward way to show that the algebraic space
\(W^+\) associated with a good quadric bundle \(Q \to \PP\widebar{V}\) cannot
be projective:

\begin{prop}\label{non-projectivity-result}
No small resolution of the double cover \(\tau \colon W \to \PP\widebar{V}\)
is projective.
\end{prop}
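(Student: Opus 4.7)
The plan is to apply the nodal criterion of Lemma~\ref{non-projectivity-criterion}: since $W$ has exactly $72$ ordinary double points, it suffices to show that the defect $\delta = b_{4}(W) - b_{2}(W)$ vanishes. As $W \to \PP\widebar{V}$ is a double cover of $\PP\widebar{V} \cong \PP^{3}$ branched along the octic surface $D$ whose singular locus is the $72$-point scheme $D_{0}$ (Lemma~\ref{discr}\ref{discr.sings}), Clemens's formula in Lemma~\ref{clemens-defect-formula} (with $d = 4$) translates this into the cohomological vanishing
\begin{equation*}
H^{1}(\PP\widebar{V}, \mathcal{I} \otimes \sO_{\PP\widebar{V}}(8)) = 0,
\end{equation*}
where $\mathcal{I}$ is the ideal sheaf of $D_{0}$; geometrically, this says that the $72$ nodes impose independent conditions on octics in $\PP^{3}$.

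To establish this vanishing, I resolve $\mathcal{I}$ by applying Lemma~\ref{non-projectivity-resolution} to the symmetric bilinear form $A \colon \mathcal{E} \to \mathcal{E}^{\vee} \otimes \sO_{\PP\widebar{V}}(2h)$ of~\eqref{X.bilinear-form} defining the quadric surface bundle $\pi \colon \widetilde{X} \to \PP\widebar{V}$, whose $3 \times 3$ minors cut out precisely the corank-$\geq 2$ locus $D_{0}$. For general $Y \in \mathfrak{a}$, Lemma~\ref{discr}\ref{discr.sings} says that $D_{0}$ is zero-dimensional, which matches the maximal expected codimension $\binom{3}{2} = 3$ for the corank-$2$ locus of a $4 \times 4$ symmetric matrix; thus $\mathcal{I}$ is Cohen--Macaulay of maximal depth, and the lemma produces an exact four-term complex with terms built from $\wedge^{2} \mathcal{E}$, $(\mathcal{E} \otimes \mathcal{E}^{\vee})_{0}$, and $\Sym^{2}(\mathcal{E}^{\vee})$, and ending in $\mathcal{I}(8)$. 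Here I use $\det \mathcal{E} \cong \sO_{\PP\widebar{V}}$, which follows from Lemmas~\ref{bl-facts}\ref{bl-facts.F} and~\ref{quadric-bundle}\ref{quadric-bundle.E}.

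Since $\mathcal{E}$ is a direct sum of line bundles on $\PP^{3}$ by those same lemmas, the outer terms of the twisted complex are sums of line bundles, while the middle term $(\mathcal{E} \otimes \mathcal{E}^{\vee})_{0}$ is controlled by the tautological sequence $0 \to (\mathcal{E} \otimes \mathcal{E}^{\vee})_{0} \to \mathcal{E} \otimes \mathcal{E}^{\vee} \to \sO_{\PP\widebar{V}} \to 0$. The desired vanishing $H^{1}(\mathcal{I}(8)) = 0$ then follows by breaking the complex into two short exact sequences and chasing through the long exact sequences in cohomology, invoking only the standard facts $H^{i}(\PP^{3}, \sO(k)) = 0$ for $i = 1, 2$ and any integer $k$, and for $i = 3$ when $k \geq -3$.

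The main point to be verified is the depth hypothesis in Lemma~\ref{non-projectivity-resolution}, which is what guarantees exactness of the resolution; this is precisely where the generality of $Y \in \mathfrak{a}$ enters, since only then does $D_{0}$ attain its maximal codimension $3$ in $\PP^{3}$. With this in hand, the remainder of the argument reduces to a finite tally of line-bundle cohomologies on $\PP^{3}$, with no further subtlety.
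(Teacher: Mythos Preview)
Your proposal is correct and follows essentially the same approach as the paper: both reduce to the vanishing $H^1(\PP\widebar{V},\mathcal{I}(8h))=0$ via Lemmas~\ref{non-projectivity-criterion} and~\ref{clemens-defect-formula}, and both establish this vanishing by applying the J\'ozefiak-type resolution of Lemma~\ref{non-projectivity-resolution} to the symmetric form $\mathcal{E}\to\mathcal{E}^\vee\otimes\sO(2h)$ and reading off line-bundle cohomology on $\PP^3$. The only difference is cosmetic: the paper writes out every line-bundle summand of the three terms explicitly (using that $\mathcal{E}$ splits), whereas you leave the middle term as $(\mathcal{E}\otimes\mathcal{E}^\vee)_0$ and handle it via the trace sequence; since that sequence splits, the two presentations are equivalent. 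One small remark: the explicit matrix~\eqref{X.bilinear-form} you cite is written for $Y\in\mathfrak{b}$, but all you actually use is the shape $\mathcal{E}\to\mathcal{E}^\vee\otimes\sO(2h)$, which holds for any $Y\in\mathfrak{a}$.
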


\begin{proof}
The ideal sheaf \(\mathcal{I}\) of the nodes of \(D\), viewed as a subscheme of
\(\PP\widebar{V}\), is locally generated by the size \(3\) minors of the
symmetric morphism \(\mathcal{E} \to \mathcal{E}^\vee \otimes
\sO_{\PP\widebar{V}}(2h)\) associated with the bilinear form defining \(Q\).
Since \(\mathcal{I}\) defines a set of reduced points in \(\PP\widebar{V}\), it
has depth \(3\), so Lemma \ref{non-projectivity-resolution} provides a
resolution of \(\mathcal{I} \otimes \sO_{\PP\widebar{V}}(8h)\) of the form
\begin{multline*}
0
\to
\sO(-3)^2 \oplus
\sO(-2)^2 \oplus
\sO(-1)^2
\to
\sO(-2)   \oplus
\sO(-1)^4 \oplus
\sO^5     \oplus
\sO(1)^4  \oplus
\sO(2) \\
\to
\sO       \oplus
\sO(1)^2  \oplus
\sO(2)^4  \oplus
\sO(3)^2  \oplus
\sO(4)
\to
\mathcal{I} \otimes \sO(8)
\to
0,
\end{multline*}
where we abbreviate \(\sO_{\PP\widebar{V}}(kh)^{\oplus r}\) to \(\sO(k)^r\) for
\(k,r \in \mathbf{Z}\). Since none of the terms in the resolution have any higher
cohomology,
\(\mathrm{H}^1(\PP\widebar{V}, \mathcal{I} \otimes \sO_{\PP\widebar{V}}(8h)) = 0\),
so Lemma \ref{clemens-defect-formula} shows that the defect of \(W\) vanishes.
Lemma \ref{non-projectivity-criterion} then applies to give the result.
\end{proof}


\section{Special rational examples}
\label{section-special}
Specialize further and consider quartic double fivefolds arising from quartics
that are singular along the line \(L\), and which are tangent to the
complementary \(3\)-plane \(P\) along the smooth quadric surface \(S\). Such
quartic double fivefolds are rational since, as observed in \S\ref{quartics-quadric-bundle-sections}, \(P\) gives rise to a section of
the associated quadric surface bundle. In this section, we construct a further
crepant resolution of the Kuznetsov component of such fivefolds, and show that,
this time, it is equivalent to a geometric Calabi--Yau \(3\)-category. 
Note that Lemma~\ref{sod-X} holds for a double cover $X \to \PP^5$ branched along any quartic hypersurface, 
regardless of the singularities, so $\Ku(X)$ is indeed still defined in this more singular setting. 

Throughout, fix a general member \(Y\) of the linear system \(\mathfrak{b}\),
and continue with the notation of \S\ref{S:quartics}.

\subsection{Projection from a section}
\label{special-projection}
As in \S\ref{quartics-quadric-bundle-sections}, the quadric surface
bundle \(\pi \colon \widetilde{X} \to \PP\widebar{V}\) admits two distinguished
sections. Fix one of them, call it
\(\sigma \colon \PP\widebar{V} \to \widetilde{X}\), and let
\(\mathcal{N} \subset \mathcal{E}\) be the corresponding line subbundle.
Writing \(\widebar{\mathcal{E}} \coloneqq \mathcal{E}/\mathcal{N}\), relative
linear projection centered along \(\PP\mathcal{N}\) defines rational maps
\(\widetilde{X} \dashrightarrow \PP\widebar{\mathcal{E}}\) and
\(\PP\mathcal{E} \dashrightarrow \PP\widebar{\mathcal{E}}\),
the former birational, which are resolved on
the blow ups \(\widehat{X}\) and \(\widehat{\PP}\mathcal{E}\) along
\(\PP\mathcal{N}\). These maps fit into a commutative diagram
\[
\begin{tikzcd}
Z' \rar["i'"'] \dar
& \widehat{X} \dar["\hat{b}_{\widetilde{X}}"] \rar[hook]
& \widehat{\PP}\mathcal{E} \dar["\hat{b}\phantom{_{\widetilde{X}}}"] \rar["\hat{a}"']
& \PP\widebar{\mathcal{E}} \dar["\bar\pi"] \\
\PP\mathcal{N} \rar[hook]
& \widetilde{X} \rar[hook]
& \PP\mathcal{E} \rar
& \PP\widebar{V}
\end{tikzcd}
\]
where
\(i' \colon Z' \hookrightarrow \widehat{X}\) is the
exceptional divisor of \(\hat{b}_{\widetilde{X}}\). 
Let \(\hat{a}_{\widehat{X}} \colon \widehat{X} \to \PP\widebar{\mathcal{E}}\) denote the birational map given by the restriction of $\hat{a}$ to $\widehat{X}$. 

To describe the basic geometry of the situation, abuse notation and
write \(H\) and \(h\) for the hyperplane classes from \(\PP V\) and
\(\PP\widebar{V}\), respectively, pulled back to \(\widehat{\PP}\mathcal{E}\).
Let \(\xi\) be the relative hyperplane class
of \(\bar{\pi} \colon \PP\widebar{\mathcal{E}} \to \PP\widebar{V}\), and write
\(\hat{\pi} \coloneqq \bar{\pi} \circ \hat{a} \colon \widehat{X} \to \PP\widebar{V}\).

\begin{lemma}
\label{special-blowup-facts}
Let \(\mathcal{G} \coloneqq (\hat{a}_*\sO_{\widehat{\PP}\mathcal{E}}(H))^\vee\).
Then
\begin{enumerate}
\item\label{special-blowup-facts.locally-free}
\(\mathcal{G}\) fits into a short exact sequence
\(
0 \to
\bar{\pi}^*\mathcal{N} \to
\mathcal{G} \to
\sO_{\PP\widebar{\mathcal{E}}}(-\xi) \to
0
\);
\item\label{special-blowup-facts.proj-bundle}
\(\widehat{\PP}\mathcal{E} \to \PP\widebar{\mathcal{E}}\) is isomorphic
to the projective bundle \(\PP\mathcal{G} \to \PP\widebar{\mathcal{E}}\);
\item\label{special-blowup-facts.class}
the class of \(\widehat{X}\) in \(\PP\mathcal{G}\) is \(H + \xi + 2h\); and
\item\label{special-blowup-facts.canonical}
\(\xi = H - Z'\) and \(K_{\widehat{X}} = -H - \xi - 2h\) in \(\Pic(\widehat{X})\).
\end{enumerate}
\end{lemma}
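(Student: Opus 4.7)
The plan is to prove parts \ref{special-blowup-facts.locally-free} and \ref{special-blowup-facts.proj-bundle} together, and then to deduce \ref{special-blowup-facts.class} and \ref{special-blowup-facts.canonical} by standard intersection theory on the resulting projective bundle. The whole picture is the fiberwise classical identification of the blow-up of \(\PP^3\) at a point with \(\PP_{\PP^2}(\sO \oplus \sO(-1))\), applied relatively over \(\PP\widebar V\).

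For \ref{special-blowup-facts.locally-free} and \ref{special-blowup-facts.proj-bundle}, I would first define a candidate rank-2 bundle \(\mathcal{G}'\) on \(\PP\widebar{\mathcal{E}}\) as the fiber product of \(\bar\pi^*\mathcal{E} \twoheadrightarrow \bar\pi^*\widebar{\mathcal{E}}\) with the tautological inclusion \(\sO_{\PP\widebar{\mathcal{E}}}(-\xi) \hookrightarrow \bar\pi^*\widebar{\mathcal{E}}\). Tautologically \(\mathcal{G}'\) fits into the short exact sequence claimed in \ref{special-blowup-facts.locally-free}. The universal line subbundle of \(\bar\pi^*\mathcal{E}\) over \(\PP\mathcal{G}'\) then defines a morphism \(\PP\mathcal{G}' \to \PP\mathcal{E}\), which I would identify with \(\hat b\) either by the universal property of the blow-up---points of \(\PP\mathcal{G}'\) parametrize pairs \((\ell,\bar\ell)\) where \(\ell \subset \mathcal{E}\) is a line and \(\bar\ell \subset \widebar{\mathcal{E}}\) is a line containing the image of \(\ell\), which is exactly the functor resolved by blowing up \(\PP\mathcal{N}\)---or by fiberwise reduction to the classical \(\mathrm{Bl}_{\mathrm{pt}}\PP^3\) picture. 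Since the pullback of \(H\) from \(\PP\mathcal{E}\) agrees with the relative hyperplane class of \(\PP\mathcal{G}'\) (checked by restricting to a fiber of \(\hat a\)), the definition \(\mathcal{G} = (\hat a_*\sO(H))^\vee\) then gives \(\mathcal{G} \cong \mathcal{G}'\), yielding both parts.

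For \ref{special-blowup-facts.class}, the exceptional divisor \(E_{\widehat\PP\mathcal{E}}\) of \(\hat b\) has class \(H - \xi\), the relative analog of Lemma~\ref{bl-facts}\ref{bl-facts.h}, which is verifiable on a fiber of \(\bar\pi\). Since \(\widetilde X\) contains \(\PP\mathcal{N}\) with multiplicity one along its generic point, the strict transform satisfies
\begin{equation*}
[\widehat X] = \hat b^*[\widetilde X] - E_{\widehat\PP\mathcal{E}} = (2H + 2h) - (H - \xi) = H + \xi + 2h.
\end{equation*}
For \ref{special-blowup-facts.canonical}, restricting \(\xi = H - E_{\widehat\PP\mathcal{E}}\) to \(\widehat X\) and using \(E_{\widehat\PP\mathcal{E}}|_{\widehat X} = Z'\) yields \(\xi = H - Z'\). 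For the canonical class, adjunction for \(\widetilde X \subset \PP\mathcal{E}\) combined with Lemma~\ref{quadric-bundle}\ref{quadric-bundle.K} gives \(K_{\PP\mathcal{E}} = -4H - 4h\); the codimension-\(3\) blow-up formula then produces \(K_{\widehat\PP\mathcal{E}} = -2H - 2\xi - 4h\), and a further adjunction using \ref{special-blowup-facts.class} gives \(K_{\widehat X} = -H - \xi - 2h\).

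The main obstacle is the identification \(\PP\mathcal{G}' \cong \widehat\PP\mathcal{E}\) in \ref{special-blowup-facts.proj-bundle}, which I expect to be the only substantive step; the rest is divisor-class bookkeeping on a projective bundle. The isolated nodes of \(\widetilde X\) along \(\PP\mathcal{N}\) cause no trouble, as they do not affect the generic multiplicity used in the strict-transform calculation or the Cartier-divisor identification \(E_{\widehat\PP\mathcal{E}}|_{\widehat X} = Z'\).
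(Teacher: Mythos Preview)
Your proposal is correct and follows essentially the same approach as the paper: the paper's proof of \ref{special-blowup-facts.locally-free} and \ref{special-blowup-facts.proj-bundle} simply invokes the analogy with Lemma~\ref{bl-facts} as ``standard,'' while you spell out the fiber-product construction of \(\mathcal{G}'\) and the identification with the blow-up; for \ref{special-blowup-facts.class} and \ref{special-blowup-facts.canonical} both arguments compute the strict transform as \((2H+2h)-(H-\xi)\) and then run adjunction, with your version making the intermediate canonical classes of \(\PP\mathcal{E}\) and \(\widehat{\PP}\mathcal{E}\) explicit where the paper just says ``straightforward.'' One small point: to conclude that \(\hat b^*H\) is exactly the tautological class of \(\PP\mathcal{G}'\) (and not just up to a pullback from \(\PP\widebar{\mathcal{E}}\)), use that the map \(\PP\mathcal{G}' \to \PP\mathcal{E}\) is induced by the tautological line subbundle of \(\hat a^*\bar\pi^*\mathcal{E}\), rather than only checking on a fiber of \(\hat a\).
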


\begin{proof}
Items \ref{special-blowup-facts.locally-free} and
\ref{special-blowup-facts.proj-bundle} are relative versions of the
corresponding parts from Lemma \ref{bl-facts}, and are standard. Let \(E'\)
be the exceptional divisor of the blow up \(\PP\mathcal{G} \to \PP\mathcal{E}\).
Then \(\xi = H - E'\) as divisor classes on \(\PP\mathcal{G}\). Since the
\(3\)-plane \(\PP\mathcal{N}\) lying at the center of the blow up
generically has multiplicity \(1\) in \(\widetilde{X}\), the class
of \(\widehat{X}\) in \(\PP\mathcal{G}\) is
\(2H + 2h - E' = H + \xi + 2h\) by Lemma
\ref{quadric-bundle}\ref{quadric-bundle.class}, yielding
\ref{special-blowup-facts.class}. From this,
\ref{special-blowup-facts.canonical} is a straightforward computation.
\end{proof}

The projection formula along \(\hat{a}\) gives a canonical isomorphism
\[
\mathrm{H}^0(\PP\mathcal{G},\sO_{\PP\mathcal{G}}(H+\xi+2h))
\cong
\mathrm{H}^0(\PP\widebar{\mathcal{E}}, \mathcal{G}^\vee(\xi+2h)).
\]
Thus an equation of \(\widehat{X}\) in \(\PP\mathcal{G}\) induces a canonical
section
\(w \colon \sO_{\PP\widebar{\mathcal{E}}} \to \mathcal{G}^\vee(\xi+2h)\)
which vanishes on the subscheme \(W^\doubleplus \subset \PP\widebar{\mathcal{E}}\) over which
\(\hat{a}_{\widehat{X}} \colon \widehat{X} \to \PP\widebar{\mathcal{E}}\) is not an
isomorphism. The scheme \(W^\doubleplus\) in fact admits a useful modular
interpretation:

\begin{lemma}
\label{special-reduction-modular}
The scheme \(W^\doubleplus\) canonically embeds into the relative Fano
scheme of lines of \(\pi \colon \widetilde{X} \to \PP\widebar{V}\)
as the subscheme of those lines incident with the section \(\sigma\).
\end{lemma}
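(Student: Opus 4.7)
The plan is to first identify $\PP\widebar{\mathcal{E}}$ with a parameter space for candidate lines through $\sigma$, then recognize $W^\doubleplus$ inside it as the locus where such a line lies on the quadric bundle $\widetilde X$, and finally match this with the subscheme of the relative Fano scheme of lines of $\pi$ cut out by incidence with $\sigma$.

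First, I will use the quotient $\mathcal{E} \twoheadrightarrow \widebar{\mathcal{E}}$ to identify $\PP\widebar{\mathcal{E}}$ with the relative Grassmannian $\mathrm{Gr}_{\mathcal{N}}(2,\mathcal{E})$ of rank-$2$ subbundles of $\mathcal{E}$ containing $\mathcal{N}$, equivalently with the family of pairs $(b,\ell)$ consisting of $b \in \PP\widebar V$ together with a line $\ell \subset \PP\mathcal{E}_b$ through $\sigma(b)$. Analyzing the birational map $\hat a_{\widehat X} \colon \widehat X \to \PP\widebar{\mathcal{E}}$ fiberwise, its fiber over $(b,\ell)$ is generically a single point, namely the residual intersection point of $\ell$ with $\pi^{-1}(b)$ distinct from $\sigma(b)$; however, when $\ell$ is entirely contained in $\pi^{-1}(b)$, the fiber becomes the whole strict transform of $\ell$ in $\widehat X$. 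Hence, set-theoretically, $W^\doubleplus$ parameterizes precisely the lines through $\sigma$ contained in the fibers of $\pi$, which is exactly the incidence locus in $M$.

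To upgrade this to a scheme-theoretic identification, I will compare the defining section $w \colon \sO_{\PP\widebar{\mathcal{E}}} \to \mathcal{G}^\vee(\xi+2h)$ of $W^\doubleplus$ with an incidence section on $\mathrm{Gr}_{\mathcal{N}}(2,\mathcal{E})$. The latter arises by restricting the bilinear form $A \colon \mathcal{E} \to \mathcal{E}^\vee(2h)$ along the tautological inclusion $\mathcal{G} \hookrightarrow \bar\pi^*\mathcal{E}$ and quotienting out the isotropic direction $\bar\pi^*\mathcal{N}$; this gives a section of $\mathcal{G}^\vee(\xi+2h)$ whose vanishing detects precisely those rank-$2$ subbundles $\mathcal{G}_{(b,\ell)} \subset \mathcal{E}_b$ on which $A$ is totally isotropic, i.e., those $\ell$ lying on the quadric bundle. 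The projection-formula identification underlying Lemma~\ref{special-blowup-facts} then matches these two sections.

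The main obstacle is the scheme-theoretic matching in the last step. Although everything is canonical, it requires carefully tracking the extension $0 \to \bar\pi^*\mathcal{N} \to \mathcal{G} \to \sO(-\xi) \to 0$ together with several line-bundle twists, in order to match the mixed and pure-quadratic components of the quadratic form on $\mathcal{G}$ (which vanishes on the isotropic subbundle $\bar\pi^*\mathcal{N}$) with the two components of $w$. I expect this bookkeeping to be routine, after which the universal property of the incidence locus in the relative Grassmannian gives the desired closed embedding.
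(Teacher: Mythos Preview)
Your approach is correct and coincides with the paper's: identify $\PP\widebar{\mathcal{E}}$ with the space of lines in $\PP\mathcal{E}$ through $\sigma$ (with $\hat{a}\colon\PP\mathcal{G}\to\PP\widebar{\mathcal{E}}$ the universal family), and then recognize $W^\doubleplus$ as the sublocus of lines lying in $\widetilde{X}$. The paper handles the scheme-theoretic step you flag as the main obstacle more directly than your bilinear-form matching: since $w$ is by construction the pushforward along $\hat{a}$ of the equation of $\widehat{X}\subset\PP\mathcal{G}$ (a section of $\sO_{\PP\mathcal{G}}(H)\otimes\sO(\xi+2h)$), its vanishing locus is tautologically the locus over which the entire $\PP^1$-fiber of $\hat{a}$ is contained in $\widehat{X}$, so no separate bookkeeping with $A$ is required.
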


\begin{proof}
The modular description of projective bundles identifies
\(\PP\widebar{\mathcal{E}}\) as the subscheme in the relative Fano scheme of lines of
\(\PP\mathcal{E} \to \PP\widebar{V}\) parameterizing
lines incident with the section \(\sigma\), and such that
\(\hat{a} \colon \PP\mathcal{G} \to \PP\widebar{\mathcal{E}}\) is the
universal family. Since \(W^\doubleplus\) may be characterized as the
subscheme of \(\PP\widebar{\mathcal{E}}\) over which the entire fiber of
\(\hat{a}\) is contained in \(\widehat{X}\), the result follows.
\end{proof}

This description allows us to determine the dimension of \(W^\doubleplus\), and
thereby identify the birational morphism
\(\hat{a}_{\widehat{X}} \colon \widehat{X} \to \PP\widebar{\mathcal{E}}\) with
what it naturally should be:

\begin{lemma}
\label{special-identify-as-blowup}
The morphism
\(\hat{a}_{\widehat{X}} \colon \widehat{X} \to \PP\widebar{\mathcal{E}}\) is
isomorphic to the blow up along \(W^\doubleplus\).
\end{lemma}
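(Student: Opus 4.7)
The plan is to identify $\widehat{X}$, which by Lemma~\ref{special-blowup-facts}\ref{special-blowup-facts.class} sits inside the $\PP^1$-bundle $\PP\mathcal{G} \to \PP\widebar{\mathcal{E}}$ as a divisor of class $H + \xi + 2h$, with the blowup of $\PP\widebar{\mathcal{E}}$ along $W^\doubleplus$ by appealing to the classical construction of a blowup along the zero locus of a regular section of a rank-$2$ vector bundle.

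First, I would translate the defining equation of $\widehat{X}$ inside $\PP\mathcal{G}$ into a section on the base. Combining the projection formula with Lemma~\ref{special-blowup-facts}\ref{special-blowup-facts.proj-bundle} gives $\hat{a}_* \sO_{\PP\mathcal{G}}(H + \xi + 2h) = \mathcal{G}^\vee(\xi + 2h)$, so $\widehat{X}$ is cut out by a section of the rank-$2$ bundle $\mathcal{G}^\vee(\xi+2h)$ on $\PP\widebar{\mathcal{E}}$; by construction this section is precisely $w$, whose vanishing locus is $W^\doubleplus$ by definition.

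Next, I would check that $W^\doubleplus$ has codimension exactly $2$ in $\PP\widebar{\mathcal{E}}$, so that $w$ is a regular section. For this, I would use the modular description from Lemma~\ref{special-reduction-modular}: over a general $p \in \PP\widebar{V}$ the fiber $Q_p$ of $\pi$ is a smooth quadric surface and there are exactly two lines on $Q_p$ through $\sigma(p)$, the two rulings through that point; this shows $W^\doubleplus \to \PP\widebar{V}$ is generically finite of degree $2$, hence of dimension $3$. A separate analysis is needed at the corank-$1$ quadric fibers of $\pi$, and especially at the $18$ images of the nodes of $\widetilde{X}$, where by Corollary~\ref{quartics-nodes-are-cone-points} the fiber is a quadric cone with $\sigma$ through its cone point, and infinitely many lines pass through $\sigma$. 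One has to verify that this merely produces excess-dimensional fibers inside a still purely $3$-dimensional $W^\doubleplus$, with no embedded or spurious components of smaller codimension; this can be controlled using the explicit form of the bilinear form \eqref{X.bilinear-form} together with the description of $\mathcal{N}$ as the image of $(\pm q, 0, 0, 1)^t$.

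Finally, I would invoke the general principle that if $\mathcal{F}$ is a rank-$2$ locally free sheaf on a smooth Noetherian scheme $Y$ and $s \in \rH^0(Y,\mathcal{F})$ is a regular section with zero locus $Z(s)$, then the divisor in the $\PP^1$-bundle $\PP\mathcal{F} \to Y$ cut out by the induced tautological section of $\sO_{\PP\mathcal{F}}(1)$ is canonically isomorphic to $\mathrm{Bl}_{Z(s)} Y$ with the natural projection realizing the blowup morphism. Applying this with $Y = \PP\widebar{\mathcal{E}}$, $\mathcal{F} = \mathcal{G}^\vee(\xi+2h)$, and $s = w$, and noting that $\PP\mathcal{G}$ coincides with $\PP(\mathcal{G}^\vee(\xi+2h))$ up to the line-bundle twist which is invisible at the level of projective bundles, identifies $\widehat{X}$ with $\mathrm{Bl}_{W^\doubleplus}\PP\widebar{\mathcal{E}}$ and $\hat{a}_{\widehat{X}}$ with the blowup morphism. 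The main obstacle is the codimension analysis in the second step, particularly at the $18$ special points where the geometric picture jumps and one must rule out excess components of the wrong codimension.
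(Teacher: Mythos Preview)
Your proposal is correct and follows essentially the same approach as the paper: translate the class $H+\xi+2h$ into a section $w$ of the rank-$2$ bundle $\mathcal{G}^\vee(\xi+2h)$, check that its zero locus $W^\doubleplus$ has the expected codimension~$2$, and then invoke the identification of the blow up along a regular zero locus with the corresponding relative hyperplane in the projectivization. The paper's dimension check is shorter than yours: it simply notes via Lemmas~\ref{special-reduction-modular} and~\ref{quartics-sections-mostly-smooth} that $W^\doubleplus \to \PP\widebar{V}$ is finite away from the (finitely many) singular points of $D$ and has $1$-dimensional fibers there, so $\dim W^\doubleplus = 3$; since $\PP\widebar{\mathcal{E}}$ is smooth (hence Cohen--Macaulay), this codimension condition alone forces $w$ to be regular, and no separate analysis of embedded components or of the explicit bilinear form~\eqref{X.bilinear-form} is needed.
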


\begin{proof}
Observe that \(W^\doubleplus\) is of its expected dimension \(3\): Lemma
\ref{special-reduction-modular} together with Lemma
\ref{quartics-sections-mostly-smooth} implies that
\(W^\doubleplus \to \PP\widebar{V}\) is finite of degree \(2\) away from the
singularities of the discriminant surface \(D\), and otherwise has
\(1\)-dimensional fibers. The ideal sheaf \(\mathcal{I}\) of
\(W^\doubleplus\) in \(\PP\widebar{\mathcal{E}}\) therefore admits a Koszul
resolution
\[
0 \to
\sO_{\PP\widebar{\mathcal{E}}} \xrightarrow{w}
\mathcal{G}^\vee(\xi+2h) \to
\mathcal{I} \otimes \det(\mathcal{G}^\vee(\xi+2h)) \to
0.
\]
The blow up of \(\PP\widebar{V}\) along \(W^\doubleplus\) is canonically
isomorphic to the \(\mathrm{Proj}\) of the Rees algebra associated with the
twisted ideal sheaf on the right. This sequence then embeds the blow up into
\(\PP\mathcal{G}\) as the relative hyperplane corresponding to the section
\(w\). But this is precisely \(\widetilde{X}\) by the construction of the
section \(w\), yielding the result.
\end{proof}

\begin{remark}
The construction of \(W^\doubleplus \to \PP\widebar{V}\) from the quadric
surface bundle \(\pi \colon \widetilde{X} \to \PP\widebar{V}\) and the section
\(\sigma\) might be viewed as a singular variant of \emph{hyperbolic
reduction}: Typically, this is a construction that takes a flat quadric bundle
equipped with a smooth section, and produces a quadric bundle of dimension two
less whose homological properties are closely related to those of the original
quadric bundle. See, for example, \cite[\S1.4]{ABB}, \cite[\S2.3]{KS:Quadrics},
\cite{Kuznetsov:Reduction}, and \cite[\S4]{Xie:Quadrics}.
\end{remark}

\subsection{Conifold transition}
\label{special-conifold-transition}
Let \(\tau \colon W \to \PP\widebar{V}\) be, as in \S\ref{CY3}, the double
cover branched along the discriminant surface \(D\). When \(Y\) is a general
member of \(\mathfrak{b}\), \(W\) has \(90\) nodes by Lemma
\ref{discr}\ref{discr.sings-P}: \(72\) corresponding to the corank \(2\) fibers
of \(\pi\), and an additional \(18\) corresponding to those of
\(\widetilde{X}\). Construct a small resolution of \(W\) in two steps. Begin
with a small resolution \(W^+ \to W\) of the \(72\) nodes over the subscheme
\(D_0 \subset \PP\widebar{V}\) corresponding to corank \(2\) fibers of \(\pi\);
specifically, choose \(W^+\) so that the \(M^+\) in Lemma
\ref{general-rationality} is obtained by flipping the planes in \(M\)
parameterizing lines in the planes of
\(\pi^{-1}(D_0) \hookrightarrow \widetilde{X}\) \emph{not} incident with the
section \(\sigma\). Combined with Lemma \ref{quartics-sections-mostly-smooth},
this ensures that \(W^\doubleplus\), embedded in \(M\) via Lemma
\ref{special-reduction-modular}, is disjoint from indeterminancy locus of the
birational map \(M \dashrightarrow M^+\), providing an embedding
\(W^\doubleplus \hookrightarrow M^+\). Next, composing with \(M^+ \to W^+\)
provides a morphism \(W^\doubleplus \to W^+\). This resolves the remaining
nodes:

\begin{lemma}
\label{special-small-resolution}
The morphism \(W^\doubleplus \to W^+\) is a small resolution of singularities.
\end{lemma}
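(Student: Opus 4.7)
The plan is to establish the three defining properties of a small resolution: $W^\doubleplus$ is smooth, the morphism $W^\doubleplus \to W^+$ is proper and birational, and its exceptional locus has codimension at least $2$. The two key tools are the modular description of $W^\doubleplus$ from Lemma~\ref{special-reduction-modular}, which identifies $W^\doubleplus$ with the space of lines in the quadric bundle $\pi$ that are incident with the section $\sigma$, and the identification of $\hat{a}_{\widehat{X}}$ as a blow-up centered at $W^\doubleplus$ from Lemma~\ref{special-identify-as-blowup}.

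First, I would analyze the fibers of $W^\doubleplus \to \PP\widebar{V}$ pointwise and compare them to those of $W \to \PP\widebar{V}$. Over a generic $v$, the fiber $\widetilde{X}_v$ is a smooth quadric surface and through $\sigma(v)$ pass exactly two lines (one per ruling), matching the two preimages of $v$ in $W^+$ and yielding birationality. Over a generic point of $D$, the rank-$3$ fiber has a unique ruling and $\sigma(v)$ is not the cone point by Lemma~\ref{quartics-sections-mostly-smooth}, so a unique line through $\sigma(v)$ matches the unique preimage of $v$ in $W^+$. Over each of the $18$ special points, Corollary~\ref{quartics-nodes-are-cone-points} together with Lemma~\ref{quartics-sections-mostly-smooth} shows $\sigma(v)$ is the cone point of the rank-$3$ quadric fiber, so every line of the ruling passes through $\sigma(v)$; the fiber of $W^\doubleplus \to \PP\widebar{V}$ is then a $\PP^1$ that collapses to the corresponding unresolved node of $W^+$.

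The remaining case is $v \in D_0$, where the fiber $\widetilde{X}_v$ is a union $\Pi_1 \cup \Pi_2$ of two planes meeting along a line $\ell$; by Lemma~\ref{quartics-sections-mostly-smooth}, $\sigma(v) \notin \ell$, so we may assume $\sigma(v) \in \Pi_1 \setminus \Pi_2$, and the lines through $\sigma(v)$ form a $\PP^1$ inside $\Pi_1$. Since the flip $M \dashrightarrow M^+$ by construction flips precisely the components parameterizing lines not incident with $\sigma$, this $\PP^1 \subset W^\doubleplus$ sits in $M^+$ as a section of the $\PP^1$-bundle $M^+ \to W^+$ over the exceptional $\PP^1$ of $W^+ \to W$ above $v$. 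Hence $W^\doubleplus \to W^+$ is a finite birational morphism in a neighborhood of each such fiber, and is therefore an isomorphism there by Zariski's Main Theorem.

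For smoothness of $W^\doubleplus$, I would invoke Lemma~\ref{special-identify-as-blowup} together with the general principle that the blow-up of a smooth variety along a local complete intersection subscheme is smooth if and only if the center itself is smooth. It then suffices to verify that $\widehat{X}$ is smooth; since $\widehat{X}$ is the blow-up of $\widetilde{X}$ along the section $\sigma \cong \PP\widebar{V}$, this reduces to checking that this blow-up resolves each of the $18$ nodes of $\widetilde{X}$. This is a local question at each node: using the explicit bilinear form~\eqref{X.bilinear-form}, one sees that $\widetilde{X}$ has a $5$-fold ordinary double point with rank-$6$ quadric tangent cone, and one checks that $\sigma$ is tangent to an isotropic $3$-plane of this cone, so the blow-up provides a standard small resolution. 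The main obstacle I foresee is this last local tangency computation, which underpins the smoothness of $\widehat{X}$ and hence of $W^\doubleplus$.
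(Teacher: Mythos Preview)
Your fiber-by-fiber analysis over the strata of \(\PP\widebar{V}\) (generic locus, generic points of \(D\), the \(72\) points of \(D_0\), the \(18\) nodes) is essentially the paper's argument. Over \(D_0\) the paper is slightly more explicit, identifying the fiber of \(M^+ \to W^+\) with the Hirzebruch surface \(F_1 \to \PP^1\) via \cite{Kuz:lines} and recognizing \(W^\doubleplus\) inside it as a non-exceptional section; your Zariski's Main Theorem shortcut is a clean replacement, since \(W^+\) is smooth over \(D_0\) by construction.

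The smoothness step is where you genuinely diverge. The paper works directly on \(W^\doubleplus\): after an \'etale change of base coordinates and a projective change on the fiber bringing \(\sigma\) to \((0{:}0{:}0{:}1)\), it writes \(W^\doubleplus\) locally as an explicit complete intersection in \(\mathbf{A}^3 \times \PP^2\) and checks the Jacobian criterion. You instead propose to first show \(\widehat{X} = \Bl_{\PP\mathcal{N}}\widetilde{X}\) is smooth, by verifying that at each node the tangent space of \(\sigma\) is a maximal isotropic \(3\)-plane of the rank-\(6\) tangent cone (so that the blow-up resolves the \(5\)-fold ODP), and then to deduce smoothness of \(W^\doubleplus\) from Lemma~\ref{special-identify-as-blowup} via the principle that an LCI center of codimension \(\ge 2\) in a smooth ambient is smooth whenever its blow-up is. This is valid and is precisely the converse of how the paper obtains Corollary~\ref{special-resolution-of-tilde-X}; the two routes involve local computations of comparable difficulty. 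Two small caveats: the isotropy check still requires the same coordinate normalization the paper carries out (in the bilinear form~\eqref{X.bilinear-form} the section is \((\pm q{:}0{:}0{:}1)\), so one must straighten it before the tangent space becomes a coordinate \(3\)-plane), and the blow-up principle you invoke, while true, is not entirely standard and deserves a brief justification.
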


\begin{proof}
Lemma \ref{special-reduction-modular} implies that \(W^\doubleplus \to W^+\)
is an isomorphism away from the nodes of \(D\). Over a node \(t \in D_0\),
the arguments of \cite[\S4]{Kuz:lines} identify the map \(M^+ \to W^+\) over
\(t\) with the projection of the Hirzebruch surface \(F_1 \to \PP^1\).
Since the section \(\sigma\) passes through a smooth point of \(\pi^{-1}(t)\)
by Lemma \ref{quartics-sections-mostly-smooth}, the embedding
\(W^\doubleplus \hookrightarrow M^+\) is identified over \(t\) as the
embedding of a non-exceptional section, and so \(W^\doubleplus \to W^+\) is
an isomorphism over \(t\).

Consider now a node \(t \in D\) lying under a node \(x \in \widetilde{X}\).
Lemmas \ref{special-reduction-modular} and
\ref{quartics-sections-mostly-smooth} together with Corollary
\ref{quartics-nodes-are-cone-points} imply that the fiber of
\(W^\doubleplus \to W^+\) above \(t\) is the smooth conic at the base of the
quadric cone \(\pi^{-1}(t)\), so it remains to see that the total space of
\(W^\doubleplus\) is smooth above \(t\). The proof of Lemma
\ref{discr}\ref{discr.sings-P} implies that there exists \'etale coordinates
\((x_0,x_1,x_2)\) centered at \(t \in \PP\widebar{V}\)
such that \(\widetilde{X} \to \PP\widebar{V}\) along with its section \(\sigma\)
is, locally around \(t\), pulled back from the quadric surface bundle in
\(\mathbf{A}^3 \times \PP^3\) defined by
\[
-y_0^2 + \beta_{11} y_1^2 + \beta_{12} y_1y_2 + \beta_{22} y_2^2 + (x_1 y_1 + x_2 y_2 + x_0^2 y_3)y_3 = 0
\]
with section \((x_0: 0: 0: 1)\); here,
\(\beta_{11}, \beta_{12}, \beta_{22} \in \Gamma(\mathbf{A}^3, \sO_{\mathbf{A}^3})\)
are such that the binary quadratic form
\(\beta_{11} y_1^2 + \beta_{12} y_1y_2 + \beta_{22} y_2^2\) has rank \(2\) in a
neighbourhood of \(0 \in \mathbf{A}^3\).

Make the change in projective coordinates \(y_0 \mapsto y_0 + x_0y_3\) to
simplify the section to \((0:0:0:1)\). The equation of the quadric surface
bundle then becomes
\[
-y_0^2 + \beta_{11}y_1^2 + \beta_{12}y_1y_2 + \beta_{22}y_2^2 + (x_0y_0 + x_1y_1 + x_2y_2)y_3 = 0.
\]
Projection away from the section \((0:0:0:1)\) has the effect of
eliminating the coordinate \(y_3\), and a standard computation shows that
\(W^\doubleplus\) is, locally around \(t\), pulled back from the complete
intersection in \(\mathbf{A}^3 \times \PP^2\) given by
\[
-y_0^2 + \beta_{11} y_1^2 + \beta_{12} y_1y_2 + \beta_{22} y_2^2 =
x_0y_0 + x_1y_1 + x_2y_2 = 0.
\]
A direct computation with the Jacobian criterion now implies that the points
above the origin of \(\mathbf{A}^3\) are smooth. This implies that
\(W^\doubleplus\) is smooth above \(t\), completing the proof.
\end{proof}

Combined with Lemma \ref{special-identify-as-blowup}, this implies that
\(\widehat{X}\) is smooth, and so:

\begin{cor}
\label{special-resolution-of-tilde-X}
The morphism \(\hat{b}_{\widetilde{X}} \colon \widehat{X} \to \widetilde{X}\)
is a resolution of singularities.
\end{cor}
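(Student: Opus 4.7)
The plan is to exploit the two different descriptions of $\widehat{X}$ that have been developed: on the one hand, $\widehat{X}$ is the blow up $\hat{b}_{\widetilde{X}} \colon \widehat{X} \to \widetilde{X}$ of $\widetilde{X}$ along $\PP\mathcal{N}$; on the other hand, Lemma~\ref{special-identify-as-blowup} identifies $\widehat{X}$ with the blow up of $\PP\widebar{\mathcal{E}}$ along the subscheme $W^\doubleplus$. The strategy is to deduce smoothness of $\widehat{X}$ from the second description, and then observe that the first description gives the required proper birational morphism to $\widetilde{X}$.

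First, I would note that $\PP\widebar{\mathcal{E}}$ is the projectivization of the locally free quotient $\widebar{\mathcal{E}} = \mathcal{E}/\mathcal{N}$ over the smooth base $\PP\widebar{V}$, and is thus itself smooth. Next, Lemma~\ref{special-small-resolution} asserts that $W^\doubleplus \to W^+$ is a small resolution of singularities, so in particular $W^\doubleplus$ is smooth: away from the $18$ nodes of $W^+$ the map is an isomorphism onto the smooth locus of $W^+$, and over those nodes smoothness of $W^\doubleplus$ is verified directly in the proof of that lemma via the explicit local model as a complete intersection in $\mathbf{A}^3 \times \PP^2$.

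With both $\PP\widebar{\mathcal{E}}$ and $W^\doubleplus$ smooth, invoking Lemma~\ref{special-identify-as-blowup} shows that $\widehat{X} \cong \Bl_{W^\doubleplus} \PP\widebar{\mathcal{E}}$ is the blow up of a smooth variety along a smooth closed subscheme, and therefore is itself smooth. Finally, the map $\hat{b}_{\widetilde{X}} \colon \widehat{X} \to \widetilde{X}$ is a blow up of $\widetilde{X}$ along $\PP\mathcal{N}$ by construction, so it is proper and birational; since $\widehat{X}$ is now known to be smooth, it is a resolution of singularities of $\widetilde{X}$, which is exactly the assertion of the corollary.

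There is no real obstacle here beyond assembling the ingredients, as all the substantive work has been done in the preceding lemmas; the only point that warrants brief mention is checking that $\hat{b}_{\widetilde{X}}$ is genuinely birational rather than merely proper, and this is immediate from its description as a blow up away from which it restricts to an isomorphism on the complement of $\PP\mathcal{N}$.
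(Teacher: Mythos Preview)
Your proposal is correct and follows essentially the same approach as the paper: the paper's proof is the one-line observation that Lemma~\ref{special-small-resolution} gives smoothness of $W^\doubleplus$, and combined with Lemma~\ref{special-identify-as-blowup} this yields smoothness of $\widehat{X}$ as a blow up of a smooth variety along a smooth center. Your write-up simply unpacks this and adds the (immediate) remark that $\hat{b}_{\widetilde{X}}$ is proper birational by construction.
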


To finish the present discussion, consider the exceptional divisor
\(i' \colon Z' \hookrightarrow \widehat{X}\) of the blow up
\(\hat{b}_{\widetilde{X}}\). Write
\(\hat{\pi}_{Z'} \coloneqq \hat{\pi} \circ i' \colon Z' \to \PP\widebar{V}\).
Observe first that \(Z'\) itself is smooth: Away from nodes of \(\widetilde{X}\),
it is the exceptional divisor of a blow up with a smooth center. Over a node
\(x \in \widetilde{X}\), the arguments of Lemma \ref{special-small-resolution}
show that \(Z'\) is, \'etale locally around
\(t \coloneqq \hat{\pi}_{Z'}(x) \in \PP\widebar{V}\), the exceptional
divisor to the blow up of
\[
-y_0^2 + \beta_{11}y_1^2 + \beta_{12}y_1y_2 + \beta_{22}y_2^2 + x_0y_0 + x_1y_1 + x_2y_2 = 0.
\]
in \(\mathbf{A}^3 \times \mathbf{A}^3\) along \(y_0 = y_1 = y_2 = 0\). A direct
computation then shows that it is smooth. Next, the following shows that the
structure sheaf \(\sO_{Z'}\) is a relatively exceptional object over
\(\PP\widebar{V}\), and that the conormal sheaf
\(\sO_{Z'}(Z')\) has vanishing cohomology over \(\PP\widebar{V}\):

\begin{lemma}
\label{special-resolution-exceptional-divisor}
\(\hat{\pi}_{Z',*}\sO_{Z'} = \sO_{\PP\widebar{V}}[0]\) and
\(\hat{\pi}_{Z',*}\sO_{Z'}(Z') = 0\) in \(\Db(\PP\widebar{V})\).
\end{lemma}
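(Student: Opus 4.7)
The plan is to identify $Z'$ as a divisor in the $\PP^2$-bundle $\bar{\pi} \colon \PP\widebar{\mathcal{E}} \to \PP\widebar{V}$ and then invoke Koszul resolutions. The key geometric fact I would invoke is that the exceptional divisor $E' \subset \widehat{\PP}\mathcal{E}$ of the blow-up $\hat{b} \colon \widehat{\PP}\mathcal{E} \to \PP\mathcal{E}$ maps isomorphically onto $\PP\widebar{\mathcal{E}}$ via the resolved linear projection $\hat{a}$; this is the standard consequence of resolving a linear projection whose center is a section. Since $Z' = E' \cap \widehat{X}$ as Cartier divisors on $\widehat{X}$, which matches $\xi = H - Z'$ on $\widehat{X}$ from Lemma~\ref{special-blowup-facts}\ref{special-blowup-facts.canonical} with the ambient relation $\xi = H - E'$ on $\widehat{\PP}\mathcal{E}$, the composite $Z' \hookrightarrow E' \xrightarrow{\sim} \PP\widebar{\mathcal{E}}$ realizes $Z'$ as a divisor in $\PP\widebar{\mathcal{E}}$, and $\hat{\pi}_{Z'}$ factors through $\bar{\pi}$ after this embedding.

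To execute this, I would first compute the relevant classes on $\PP\widebar{\mathcal{E}}$. The isomorphism $E' \cong \PP\widebar{\mathcal{E}}$ exhibits $E'$ as the section of $\widehat{\PP}\mathcal{E} \cong \PP\mathcal{G} \to \PP\widebar{\mathcal{E}}$ corresponding to the sub-line-bundle $\bar{\pi}^*\mathcal{N} \subset \mathcal{G}$ from Lemma~\ref{special-blowup-facts}\ref{special-blowup-facts.locally-free}. Since $\mathcal{N} \cong \sO_{\PP\widebar{V}}(-h)$, as follows from the description of $\sigma$ in \S\ref{quartics-quadric-bundle-sections}, pulling back along this section gives $\sO_{\widehat{\PP}\mathcal{E}}(H)|_{E'} \cong \sO_{\PP\widebar{\mathcal{E}}}(h)$, while $\xi$ and $h$ are already pulled back from $\PP\widebar{\mathcal{E}}$ and so restrict to themselves. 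Since $\widehat{X}$ is cut out in $\widehat{\PP}\mathcal{E}$ by a section of $\sO(H + \xi + 2h)$ by Lemma~\ref{special-blowup-facts}\ref{special-blowup-facts.class}, restriction to $E' \cong \PP\widebar{\mathcal{E}}$ shows that $Z' \subset \PP\widebar{\mathcal{E}}$ is a divisor of class $\xi + 3h$. Similarly, using $Z' = H - \xi$ in $\Pic(\widehat{X})$, the normal bundle of $Z' \subset \widehat{X}$ restricts to $\sO_{Z'}(Z') \cong \sO_{\PP\widebar{\mathcal{E}}}(h - \xi)|_{Z'}$.

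With these identifications in hand, I would close the argument by applying the Koszul resolution
\begin{equation*}
0 \to \sO_{\PP\widebar{\mathcal{E}}}(-\xi - 3h) \to \sO_{\PP\widebar{\mathcal{E}}} \to \sO_{Z'} \to 0
\end{equation*}
and its twist by $\sO(h - \xi)$, then pushing forward along the $\PP^2$-bundle $\bar{\pi}$. Standard relative cohomology vanishing gives $R\bar{\pi}_* \sO_{\PP\widebar{\mathcal{E}}}(d\xi + kh) = 0$ for $d \in \{-1, -2\}$ and any $k$, while $\bar{\pi}_* \sO_{\PP\widebar{\mathcal{E}}} = \sO_{\PP\widebar{V}}$. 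All the nontrivial terms of both Koszul resolutions fall into this vanishing range, so the two required identities $\hat{\pi}_{Z', *}\sO_{Z'} = \sO_{\PP\widebar{V}}$ and $\hat{\pi}_{Z', *}\sO_{Z'}(Z') = 0$ follow immediately. The main obstacle is really in the setup: carefully identifying $E'$ with $\PP\widebar{\mathcal{E}}$ and tracking how line bundles on $\widehat{\PP}\mathcal{E}$ restrict so that the class of $Z'$ in $\PP\widebar{\mathcal{E}}$ can be read off correctly; once this is done, the cohomology calculation is routine.
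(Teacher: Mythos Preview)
Your argument is correct and takes a genuinely different, more streamlined route than the paper. The paper works on $\widehat{X}$: it uses the ideal sheaf sequence of $Z'$ in $\widehat{X}$,
\[
0 \to \sO_{\widehat{X}}(-H+\xi) \to \sO_{\widehat{X}} \to i'_*\sO_{Z'} \to 0,
\]
and its twist by $H-\xi$, then computes the pushforward of each $\sO_{\widehat{X}}$-term by further invoking the ideal sheaf sequence of $\widehat{X}$ in $\PP\mathcal{G}$ together with the facts of Lemma~\ref{special-blowup-facts}. You instead observe that $\hat{a}$ maps $E'$ isomorphically onto $\PP\widebar{\mathcal{E}}$, so that $Z' = \widehat{X}\cap E'$ sits directly as a divisor of class $\xi+3h$ in the $\PP^2$-bundle $\PP\widebar{\mathcal{E}}$, and then a single ideal sheaf sequence on $\PP\widebar{\mathcal{E}}$ together with the standard vanishing $R\bar{\pi}_*\sO_{\PP\widebar{\mathcal{E}}}(d\xi)=0$ for $d\in\{-1,-2\}$ finishes both computations. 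This bypasses the intermediate pushforward from $\widehat{X}$ and the second Koszul layer coming from $\widehat{X}\subset\PP\mathcal{G}$, at the cost of having to check carefully how $H$, $\xi$, and $Z'$ restrict under $E'\cong\PP\widebar{\mathcal{E}}$ (which you do correctly via the identification of $E'$ with the section of $\PP\mathcal{G}\to\PP\widebar{\mathcal{E}}$ given by $\bar{\pi}^*\mathcal{N}$). A minor stylistic point: these are really just ideal sheaf sequences of a single divisor rather than genuine Koszul complexes, so you might phrase it that way.
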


\begin{proof}
Write the ideal sheaf sequence of \(Z'\) in \(\widehat{X}\) using Lemma
\ref{special-blowup-facts}\ref{special-blowup-facts.canonical} as
\[
0 \to
\sO_{\widehat{X}}(-H+\xi) \to
\sO_{\widehat{X}} \to
i'_*\sO_{Z'} \to
0.
\]
A straightforward computation using the ideal sheaf sequence of \(\widehat{X}\)
in \(\PP\mathcal{G}\) together with the facts of Lemma
\ref{special-blowup-facts} shows that 
\((\bar\pi \circ \hat{a}_{\widehat{X}})_* \sO_{\widehat{X}}(-H+\xi) = 0\), 
and so
\[
\hat{\pi}_{Z',*}\sO_{Z'} \cong
(\bar{\pi} \circ \hat{a}_{\widehat{X}} \circ i')_*\sO_{Z'} \cong
(\bar{\pi} \circ \hat{a}_{\widehat{X}})_*\sO_{\widehat{X}} \cong
\hat{\pi}_*\sO_{\widehat{X}} \cong
\sO_{\PP\widebar{V}}[0].
\]

For the second claim, consider the sequence 
\begin{equation*}
0 \to
\sO_{\widehat{X}} \to
\sO_{\widehat{X}}(H-\xi) \to
i'_*\sO_{Z'}(Z') \to
0
\end{equation*} 
obtained by twisting by \(Z' = H - \xi\) the ideal sheaf sequence of $Z'$ in $\widehat{X}$. 
Again, using the ideal sheaf sequence of \(\widehat{X}\)
in \(\PP\mathcal{G}\) together with the facts of Lemma
\ref{special-blowup-facts}, a straightforward computation 
shows that \((\bar\pi \circ \hat{a}_{\widehat{X}})_* \sO_{\widehat{X}}(H-\xi) \cong \sO_{\PP\widebar{V}}[0]\). 
Since also \((\bar\pi \circ \hat{a}_{\widehat{X}})_*\sO_{\widehat{X}} \cong
\sO_{\PP\widebar{V}}[0]\), we find 
\begin{equation*}
\hat{\pi}_{Z',*}\sO_{Z'}(Z') = (\bar{\pi} \circ \hat{a}_{\widehat{X}} \circ i')_*\sO_{Z'}(Z')  = 0. \qedhere
\end{equation*} 
\end{proof}

\subsection{Geometric Kuznetsov component}
\label{special-geometric-equivalence}
The geometric situation considered so far in this section is summarized by the
commutative diagram
\[
\begin{tikzcd}
Z' \ar[rr,hook, "i'"'] \ar[d]
&
& \widehat{X}
  \ar[dl, "\hat{b}_{\widetilde{X}}"]
  \ar[dd, "\hat{\pi}"]
  \ar[dr, "\phantom{\mathclap{\hat{b}_{\widehat{X}}}}\hat{a}_{\widehat{X}}"']
&
& \Lambda \ar[ll,hook'] \ar[d] \\
\PP\mathcal{N} \ar[r, hook]
& \widetilde{X} \ar[dr, "\pi"]
&
& \PP\widebar{\mathcal{E}} \ar[dl, "\bar{\pi}"']
& W^\doubleplus \ar[l,hook', "j"'] \ar[d]\\
&& \PP\widebar{V} \ar[llu,"\sigma"',"\cong",bend left=20]
& W \ar[l,"\tau"']
& W^+ \ar[l]
\end{tikzcd}
\]
where \(\Lambda \coloneqq \PP\mathcal{G}\rvert_{W^\doubleplus}\) is the exceptional
divisor of the blow up \(\hat{a}_{\widehat{X}}\). Notably, \(\widehat{X}\) is a
blow up of schemes over \(\PP\widebar{V}\) in two different ways. This
distinguishes two
\(\Db(\PP\widebar{V})\)-linear semiorthogonal decompositions of
\(\Db(\widehat{X})\), where linearity means that each semiorthogonal component
is stable under tensor products with objects in the image of \(\hat{\pi}^*
\colon \Db(\PP\widebar{V}) \to \Db(\widehat{X})\).
They are:

\begin{lemma}
\label{special-decompositions}
There are \(\Db(\PP\widebar{V})\)-linear semiorthogonal decompositions of
\(\Db(\widehat{X})\) given by
\begin{gather*}
\big\langle
\hat{\pi}^*\Db(\PP\widebar{V}) \otimes i'_*\sO_{Z'}(Z'),
\widehat{\Ku}(X),
\hat{\pi}^*\Db(\PP\widebar{V}) \otimes \sO_{\widehat{X}},
\hat{\pi}^*\Db(\PP\widebar{V}) \otimes \sO_{\widehat{X}}(H)
\rangle \;\text{and} \\
\big\langle
\hat{a}_{\widehat{X}}^*j_*\Db(W^\doubleplus) \otimes \sO_{\widehat{X}}(-\Lambda),
\hat{\pi}^*\Db(\PP\widebar{V}) \otimes \sO_{\widehat{X}},
\hat{\pi}^*\Db(\PP\widebar{V}) \otimes \sO_{\widehat{X}}(\xi),
\hat{\pi}^*\Db(\PP\widebar{V}) \otimes \sO_{\widehat{X}}(2\xi)
\big\rangle,
\end{gather*}
where \(\widehat{\Ku}(X)\) is a crepant categorical resolution of singularities
of \(\Ku(X)\) and \(\tKu(X)\).
\end{lemma}

\begin{proof}
The second semiorthogonal decomposition arises from the blow up and projective
bundle formulas upon identifying
\(\hat{a}_{\widehat{X}} \colon \widehat{X} \to \PP\widebar{\mathcal{E}}\)
as the blow up along \(W^\doubleplus\) via Lemma
\ref{special-identify-as-blowup}.

For the first decomposition, we begin by noting that some of the arguments
from \S\ref{section-general} go through in the more singular setting we
are considering now.
First, examining the arguments of \cite[\S4]{Kuz:cat-res} shows that we may
still define a subcategory $\tKu(X) \subset \Db(\widetilde{X})$ by the
semiorthogonal decomposition~\eqref{tKuX} of Lemma~\ref{lemma-DbtX}.
It will no longer be a crepant categorical resolution, since $\widetilde{X}$ is singular,
but pullback and pushforward along
$b_X \colon \tX \to X$ still restrict to functors
\begin{equation*}
b_X^* \colon \Ku(X)^{\perf} \to \tKu(X)
\qquad \text{and} \qquad
b_{X*} \colon \tKu(X) \to \Ku(X)
\end{equation*}
which are mutually left and right adjoint.
Second, Proposition~\ref{prop-Ku-Cl} holds with verbatim proof, so that there is an equivalence $\tKu(X) \simeq \Db(\PP\widebar{V}, \mathcal{B}_0)$.

Now apply
\cite[Theorem 1]{Kuz:cat-res} with the resolution of singularities
\(\hat{b}_{\widehat{X}} \colon \widehat{X} \to \widetilde{X}\) from
Corollary \ref{special-resolution-of-tilde-X}; the assumptions are still
satisfied in this situation because all the additional singularities are nodes,
whence rational. A suitable semiorthogonal decomposition of \(\Db(Z')\) is
provided by Lemma \ref{special-resolution-exceptional-divisor}, which implies
that there is a \(\Db(\PP\widebar{V})\)-linear decomposition
\[
\Db(Z') =
\langle
  \hat{\pi}_{Z'}^*\Db(\PP\widebar{V}) \otimes \sO_{Z'}(Z'),
  \mathcal{D}'
\rangle
\]
where \(\mathcal{D}'\) is the left orthogonal to
\(\hat{\pi}_{Z'}^*\Db(\PP\widebar{V}) \otimes \sO_{Z'}(Z')\),
and contains \(\hat{\pi}_{Z'}^*\Db(\PP\widebar{V}) \otimes \sO_{Z'}\).
Arguing as in Lemma \ref{lemma-DbtX} and using the decomposition~\eqref{DbtX2} now gives the second promised semiorthogonal decomposition of $\Db(\widehat{X})$ and the fact that \(\widehat{\Ku}(X)\) is a crepant categorical resolution of $\Db(\PP\widebar{V}, \mathcal{B}_0)$. 
By the statements from the previous paragraph, this implies that 
\(\widehat{\Ku}(X)\) is also a crepant categorical resolution of $\Ku(X)$ and  \(\tKu(X)\). 
\end{proof}

To identify the
crepant categorical resolution \(\widehat{\Ku}(X)\) with the geometric
Calabi--Yau category \(\Db(W^\doubleplus)\), we will use a mutation argument to 
relate the semiorthogonal decompositions of Lemma \ref{special-decompositions}. 
For this purpose, we will make use of the following facts about mutation functors when working relatively to a base. 

\begin{lemma}
\label{lemma-mutations-relative}
Let $f \colon Y \to B$ be a morphism of smooth proper varieties, 
and let 
\begin{equation*}
\Db(Y) = \langle \cA_1, \dots, \cA_n \rangle
\end{equation*} 
be a $B$-linear semiorthogonal decomposition with admissible components. 
\begin{enumerate}
\item 
The left and right mutation functors throught any $\cA_k$ are 
$B$-linear, i.e. commute with tensoring by pullbacks of objects from $\Db(B)$. 
\item \label{exceptional-mutation-relative} If $\cA_k$ is generated by a relatively exceptional object $E$ over $B$, i.e. 
$f_* \cHom_Y(E, E) \simeq \sO_{B}$ (where $\cHom_Y(-,-)$ is the derived sheaf $\Hom$ on $Y$) and $\cA_k$ is the image of the fully faithful functor $f^*(-) \otimes E \colon \Db(B) \to \Db(Y)$, then the associated mutation functors are given by 
\begin{align*}
  \mathbf{L}_{f^*\Db(B) \otimes E} (F) & = \mathrm{Cone}((f^*f_*\cHom(E,F)) \otimes E \to F) , \\ 
  \mathbf{R}_{f^*\Db(B) \otimes E}(F) &= \mathrm{Cone}(F \to (f^*f_*\cHom(F,E))^\vee \otimes E)[-1].
\end{align*}

\item \label{serre-mutation-relative}
We have 
\begin{equation*}
\mathbf{L}_{\langle \cA_1, \dots, \cA_{n-1} \rangle}(\cA_n) = \cA_n \otimes \omega_{Y/B}
  \quad\text{and}\quad
\mathbf{R}_{\langle \cA_2, \dots, \cA_{n} \rangle}(\cA_1) = \cA_1 \otimes \omega_{Y/B}^{-1}. 
\end{equation*}
\end{enumerate}
\end{lemma}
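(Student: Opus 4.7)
The plan is to prove each part by running the standard absolute arguments in the relative setting, replacing $\Hom^\bullet(-,-)$ with $f_*\cHom(-,-)$ and the Serre functor with its relative analog. Throughout, I use that for a $B$-linear admissible subcategory, both adjoints to the inclusion commute with tensoring by pullbacks from $B$, so the projection functors onto it are $B$-linear. Part (i) then follows immediately: each mutation functor is by definition a cone of a canonical map built from the projection onto $\cA_k$, which is $B$-linear, and forming cones preserves $B$-linearity.

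For part \ref{exceptional-mutation-relative}, I would first identify the right adjoint to the inclusion $i \colon \Db(B) \to \Db(Y)$, $K \mapsto f^*K \otimes E$. Using the $(f^*, f_*)$ and $((-) \otimes E, \cHom(E, -))$ adjunctions gives $i^R(F) = f_*\cHom(E, F)$, with full faithfulness following from the relative exceptionality assumption $f_*\cHom(E, E) \simeq \sO_B$. Taking the cone of the counit $f^*f_*\cHom(E, F) \otimes E \to F$ yields the formula for $\mathbf{L}_{f^*\Db(B) \otimes E}$. For the right mutation, I compute the left adjoint $i^L$ via Grothendieck--Verdier duality for the smooth proper morphism $f$ with $f^! \simeq f^*(-) \otimes \omega_{Y/B}[\dim(Y/B)]$, which gives
\[
\Hom^\bullet_Y(F, f^*K \otimes E) \simeq \Hom^\bullet_B(f_*(\cHom(E, F) \otimes \omega_{Y/B}[\dim(Y/B)]), K).
\]
Relative Serre duality for $f$ in the form $f_*(G^\vee \otimes \omega_{Y/B}[\dim(Y/B)]) \simeq (f_*G)^\vee$, applied with $G = \cHom(F, E)$, rewrites this as $i^L(F) \simeq (f_*\cHom(F, E))^\vee$, so the cone of the unit $F \to (f^*f_*\cHom(F,E))^\vee \otimes E$ produces the stated formula for $\mathbf{R}_{f^*\Db(B) \otimes E}$.

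Part \ref{serre-mutation-relative} is the relative analog of part (iii) of Lemma \ref{lemma-mutations}: the relative Serre functor on $\Db(Y)$ over $\Db(B)$ is $(-) \otimes \omega_{Y/B}[\dim(Y/B)]$, so the standard proof that the leftmost (respectively rightmost) mutation equals the Serre (respectively inverse Serre) functor applied to the last (respectively first) component carries over directly, and the overall shift is absorbed into the identification of subcategories.

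The main obstacle is the bookkeeping of dualizing twists in part \ref{exceptional-mutation-relative}: one must verify that Grothendieck--Verdier duality combined with the relative Serre duality identity really does produce the clean expression $(f^*f_*\cHom(F, E))^\vee \otimes E$ for the projection of $F$ onto $f^*\Db(B) \otimes E$. Once this is handled, the rest is routine adjunction manipulation.
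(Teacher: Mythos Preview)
Your proposal is correct and follows essentially the same approach as the paper, which also derives the mutation formulas from the general identity $\mathbf{L}_{\cA_k}(F) = \mathrm{Cone}(\alpha\alpha^!(F) \to F)$ by computing the right adjoint of $f^*(-)\otimes E$ as $f_*\cHom_Y(E,-)$. The paper's proof is much terser---it only spells out the left mutation in part~\ref{exceptional-mutation-relative} and declares the rest analogous---so your additional detail on the right mutation via Grothendieck--Verdier duality and on the relative Serre functor for part~\ref{serre-mutation-relative} is a faithful expansion rather than a different argument.
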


\begin{proof}
These statements follow easily from the definitions, and in the cases of~\ref{exceptional-mutation-relative} and~\ref{serre-mutation-relative} are analogous to their absolute versions in Lemma~\ref{lemma-mutations}.  
For example, let us prove the first claim of~\ref{exceptional-mutation-relative}. 
If $\alpha \colon \cA_k \to \Db(Y)$ is the inclusion functor, then the left mutation functor is given by 
\begin{equation*}
\mathbf{L}_{\cA_k}(F) = \mathrm{Cone}(\alpha\alpha^!(F) \to F). 
\end{equation*} 
Since the right adjoint to the functor $f^*(-) \otimes E$ is $f_*(\cHom_Y(E,-)) \colon \Db(Y) \to \Db(B)$, the claimed formula for $\mathbf{L}_{f^*\Db(B) \otimes E}$ follows. 
\end{proof} 

The following result completes the proof of Theorem~\ref{main-theorem-special}. 

\begin{prop}\label{special-Ku-hat-is-CY}
There is a $\PP \widebar{V}$-linear equivalence of categories \(\widehat{\Ku}(X) \simeq \Db(W^\doubleplus)\).
\end{prop}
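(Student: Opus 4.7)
The strategy is to compare the two $\PP\widebar{V}$-linear semiorthogonal decompositions of $\Db(\widehat{X})$ provided by Lemma~\ref{special-decompositions} via a sequence of relative mutations, in the spirit of the absolute mutation sequence in \S\ref{S:clifford} but using the relative mutation formulas of Lemma~\ref{lemma-mutations-relative}. The goal is to transform the first decomposition into the second, and thereby identify $\widehat{\Ku}(X)$ with a line-bundle twist of the blow-up component $\hat{a}^{*}_{\widehat{X}} j_{*}\Db(W^\doubleplus)$. Since the functor $\hat{a}^{*}_{\widehat{X}} j_{*}$ is fully faithful, this will yield the desired $\PP\widebar{V}$-linear equivalence.

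The two key inputs for these mutations are the relative canonical class and two extension sequences supported on the exceptional divisor. By Lemma~\ref{special-blowup-facts}\ref{special-blowup-facts.canonical},
\begin{equation*}
\omega_{\widehat{X}/\PP\widebar{V}} = \sO_{\widehat{X}}(-H-\xi+h);
\end{equation*}
since $\sO_{\widehat{X}}(h) = \hat{\pi}^{*}\sO_{\PP\widebar{V}}(h)$ is a pullback from $\PP\widebar{V}$, tensoring by it preserves every subcategory of the form $\hat{\pi}^{*}\Db(\PP\widebar{V}) \otimes L$, so modulo pullbacks we may treat $\omega_{\widehat{X}/\PP\widebar{V}}$ as $\sO_{\widehat{X}}(-H-\xi)$. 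By Lemma~\ref{lemma-mutations-relative}\ref{serre-mutation-relative}, relative Serre duality then rotates an entire component across the rest of the decomposition at the cost of tensoring by this relative canonical. On the other hand, the divisorial relation $H = \xi + Z'$ from Lemma~\ref{special-blowup-facts}\ref{special-blowup-facts.canonical} yields the short exact sequences
\begin{equation*}
0 \to \sO_{\widehat{X}}(\xi) \to \sO_{\widehat{X}}(H) \to i'_{*}\sO_{Z'}(H) \to 0
\quad\text{and}\quad
0 \to \sO_{\widehat{X}} \to \sO_{\widehat{X}}(Z') \to i'_{*}\sO_{Z'}(Z') \to 0,
\end{equation*}
which produce components supported on $Z'$ when mutating $\xi$-twisted line bundle components against $H$-twisted ones.

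Concretely, one first uses relative Serre duality to shift the $\sO_{\widehat{X}}(2\xi)$- and then the $\sO_{\widehat{X}}(\xi)$-twisted components from the first decomposition all the way across to the left end, producing (modulo pullbacks from $\PP\widebar{V}$) components twisted by $\sO_{\widehat{X}}(-H+\xi)$ and $\sO_{\widehat{X}}(-H)$, respectively. A suitable overall twist by $\sO_{\widehat{X}}(H)$, together with additional mutations using the short exact sequences above, should then transform the decomposition into a form matching the second decomposition of Lemma~\ref{special-decompositions} component by component. By the uniqueness of admissible components in a semiorthogonal decomposition, this identifies $\widehat{\Ku}(X)$ with a twist of $\hat{a}^{*}_{\widehat{X}} j_{*}\Db(W^\doubleplus)$, completing the proof.

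The main obstacle is constructing the precise mutation sequence and verifying that each intermediate step behaves as expected. In particular, the mutation producing the component $\hat{\pi}^{*}\Db(\PP\widebar{V}) \otimes i'_{*}\sO_{Z'}(Z')$ requires a careful computation of relative cones via Lemma~\ref{lemma-mutations-relative}\ref{exceptional-mutation-relative}, using pushforwards of line bundles from $\widehat{X}$ through both the birational morphism $\hat{a}_{\widehat{X}}$ (via the projection formula and the fact that $(\hat{a}_{\widehat{X}})_{*}\sO_{\widehat{X}} = \sO_{\PP\widebar{\mathcal{E}}}$) and the projective bundle $\bar{\pi} \colon \PP\widebar{\mathcal{E}} \to \PP\widebar{V}$ (via the projective bundle formula), and matching the outcome with the natural $Z'$-supported object coming from the short exact sequences above.
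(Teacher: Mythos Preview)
Your overall strategy is exactly the paper's: compare the two $\PP\widebar{V}$-linear semiorthogonal decompositions of Lemma~\ref{special-decompositions} by a short sequence of relative mutations using Lemma~\ref{lemma-mutations-relative}, and read off the equivalence from the matching orthogonal components. The ingredients you list---the relative canonical $\omega_{\widehat{X}/\PP\widebar{V}} \equiv \sO_{\widehat{X}}(-H-\xi)$ modulo pullbacks, and the ideal sheaf sequences coming from $Z' = H - \xi$---are also the ones the paper uses.

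However, as you yourself acknowledge, you have not actually carried out the mutation sequence; the phrases ``should then transform'' and ``the main obstacle is constructing the precise mutation sequence'' mean that what you have written is a plan, not a proof. There are two concrete points where your outline would get stuck without further input. First, when a $Z'$-supported component is rotated across the whole decomposition by relative Serre duality, you need to know how $\omega_{\widehat{X}/\PP\widebar{V}}$ restricts to $Z'$; the missing observation is that $H = h$ on the section $\PP\mathcal{N}$ and hence on $Z'$, so that for instance $i'_*\sO_{Z'}(Z') \otimes \omega_{\widehat{X}/\PP\widebar{V}}^{-1} \cong i'_*\sO_{Z'}$ modulo pullbacks. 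Second, the single-object mutations between a line-bundle component and the $Z'$-supported component require the pushforward computations of Lemma~\ref{special-resolution-exceptional-divisor}, namely $\hat{\pi}_{Z',*}\sO_{Z'} \cong \sO_{\PP\widebar{V}}$ and $\hat{\pi}_{Z',*}\sO_{Z'}(Z') = 0$, in order to identify the cone in Lemma~\ref{lemma-mutations-relative}\ref{exceptional-mutation-relative}; your proposal to compute these via $\hat{a}_{\widehat{X}}$ and $\bar\pi$ is reasonable, but Lemma~\ref{special-resolution-exceptional-divisor} is exactly this computation already done.

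For comparison, the paper's argument runs in four steps starting from the \emph{second} decomposition: (1) rotate $\hat{\pi}^*\Db(\PP\widebar{V}) \otimes i'_*\sO_{Z'}(Z')$ to the far right, where it becomes $\hat{\pi}^*\Db(\PP\widebar{V}) \otimes i'_*\sO_{Z'}$ by the observation $H|_{Z'} = h|_{Z'}$; (2) right mutate the $\sO_{\widehat{X}}(H)$-component through it to obtain the $\sO_{\widehat{X}}(\xi)$-component; (3) left mutate the $i'_*\sO_{Z'}$-component through the $\sO_{\widehat{X}}$-component to obtain the $\sO_{\widehat{X}}(-Z')$-component; (4) rotate $\sO_{\widehat{X}}(-Z')$ to the far right, where it becomes $\sO_{\widehat{X}}(2\xi)$. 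This lands directly on the first decomposition and identifies $\mathbf{R}_{\hat{\pi}^*\Db(\PP\widebar{V}) \otimes \sO_{\widehat{X}}(-Z')}\widehat{\Ku}(X)$ with $\hat{a}_{\widehat{X}}^*j_*\Db(W^\doubleplus)$. Your proposed reverse direction is fine in principle, but to make it a proof you must actually run these (or the inverse) steps.
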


\begin{proof}
The following argument is
similar to \cite[Theorem 4.2]{Xie:Quadrics}. 
As in Proposition~\ref{prop-Ku-Cl}, the equivalence is obtained by comparing
the two semiorthogonal decompositions of Lemma~\ref{special-decompositions}, 
starting from 
\begin{equation*}
\Db(\widehat{X}) = \big\langle
\hat{\pi}^*\Db(\PP\widebar{V}) \otimes i'_*\sO_{Z'}(Z'),
\widehat{\Ku}(X),
\hat{\pi}^*\Db(\PP\widebar{V}) \otimes \sO_{\widehat{X}},
\hat{\pi}^*\Db(\PP\widebar{V}) \otimes \sO_{\widehat{X}}(H)
\rangle. 
\end{equation*} 

\smallskip
\noindent\textbf{Step 1.}
Mutate the first component to the far right. 
Note that by Lemma~\ref{special-blowup-facts}  we have 
$-K_{\widehat{X}} = 2H - Z' + 2h$, and that $H = h$ on the section \(\PP\mathcal{N}\) and hence also on $Z'$. 
Thus the result of the mutation is 
\begin{equation*}
\Db(\widehat{X}) = \big\langle
\widehat{\Ku}(X),
\hat{\pi}^*\Db(\PP\widebar{V}) \otimes \sO_{\widehat{X}},
\hat{\pi}^*\Db(\PP\widebar{V}) \otimes \sO_{\widehat{X}}(H), 
\hat{\pi}^*\Db(\PP\widebar{V}) \otimes i'_*\sO_{Z'}
\rangle. 
\end{equation*} 

\smallskip
\noindent\textbf{Step 2.}
Right mutate $\hat{\pi}^*\Db(\PP\widebar{V}) \otimes \sO_{\widehat{X}}(H)$ through $\hat{\pi}^*\Db(\PP\widebar{V}) \otimes i'_*\sO_{Z'}$. 
By the $\PP\widebar{V}$-linearity of the mutation functor, it suffices to understand the result for $\sO_{\widehat{X}}(H)$. 
As noted above, $H = h$ on $Z'$, and thus by Lemma~\ref{special-resolution-exceptional-divisor} we have 
\begin{equation*}
\hat{\pi}_* \cHom_{\widehat{X}}(\sO_{\widehat{X}}(H), i'_*\sO_{Z'}) 
\cong \hat{\pi}_*i'_*\sO_{Z'}(-h)
\cong \sO_{\PP \widebar{V}}(-h). 
\end{equation*} 
Using the description of the right mutation functor in Lemma~\ref{lemma-mutations-relative} and the equality $\xi = H - Z'$ from Lemma~\ref{special-blowup-facts}\ref{special-blowup-facts.canonical}, we find that the 
result of the mutation is 
\begin{equation*}
\Db(\widehat{X}) = \big\langle
\widehat{\Ku}(X),
\hat{\pi}^*\Db(\PP\widebar{V}) \otimes \sO_{\widehat{X}}, 
\hat{\pi}^*\Db(\PP\widebar{V}) \otimes i'_*\sO_{Z'}, 
\hat{\pi}^*\Db(\PP\widebar{V}) \otimes \sO_{\widehat{X}}(\xi)
\rangle. 
\end{equation*} 

\smallskip
\noindent\textbf{Step 3.}
Left mutate $\hat{\pi}^*\Db(\PP\widebar{V}) \otimes i'_*\sO_{Z'}$ through $\hat{\pi}^*\Db(\PP\widebar{V}) \otimes \sO_{\widehat{X}}$. 
Similarly to the previous step, the result is 
\begin{equation*}
\Db(\widehat{X}) = \big\langle
\widehat{\Ku}(X),
\hat{\pi}^*\Db(\PP\widebar{V}) \otimes \sO_{\widehat{X}}(-Z'), 
\hat{\pi}^*\Db(\PP\widebar{V}) \otimes \sO_{\widehat{X}}, 
\hat{\pi}^*\Db(\PP\widebar{V}) \otimes \sO_{\widehat{X}}(\xi)
\rangle. 
\end{equation*} 

\smallskip
\noindent\textbf{Step 4.}
Mutate $\widehat{\Ku}(X)$ through $\hat{\pi}^*\Db(\PP\widebar{V}) \otimes \sO_{\widehat{X}}(-Z')$ and then mutate $\hat{\pi}^*\Db(\PP\widebar{V}) \otimes \sO_{\widehat{X}}(-Z')$ to the far right. 
Using again the formulas $-K_{\widehat{X}} = 2H - Z' + 2h$ and $\xi = H - Z'$, the result is 
\begin{equation*}
\begin{multlined}
\Db(\widehat{X}) = \big\langle
\mathbf{R}_{\hat{\pi}^*\Db(\PP\widebar{V}) \otimes \sO_{\widehat{X}}(-Z')}\widehat{\Ku}(X), 
\hat{\pi}^*\Db(\PP\widebar{V}) \otimes \sO_{\widehat{X}}, \\ 
\hat{\pi}^*\Db(\PP\widebar{V}) \otimes \sO_{\widehat{X}}(\xi), 
\hat{\pi}^*\Db(\PP\widebar{V}) \otimes \sO_{\widehat{X}}(2\xi) 
\rangle. 
\end{multlined}
\end{equation*}
Comparing with the second semiorthogonal decomposition from
Lemma~\ref{special-decompositions}, this shows that
\begin{equation*}
\mathbf{R}_{\hat{\pi}^*\Db(\PP\widebar{V}) \otimes \sO_{\widehat{X}}(-Z')}\widehat{\Ku}(X)
= \hat{a}_{\widehat{X}}^*j_*\Db(W^\doubleplus) \otimes \sO_{\widehat{X}}(-\Lambda).
\end{equation*}
Since all of the functors $\mathbf{R}_{\hat{\pi}^*\Db(\PP\widebar{V}) \otimes \sO_{\widehat{X}}(-Z')}$, $\hat{a}_{\widehat{X}}^*$, and $j_*$ are $\PP \widebar{V}$-linear, it follows that there is a $\PP \widebar{V}$-linear equivalence $\widehat{\Ku}(X) \simeq \Db(W^\doubleplus)$; 
explicitly, the composition 
\begin{equation*}
\mathbf{L}_{\hat{\pi}^*\Db(\PP\widebar{V}) \otimes \sO_{\widehat{X}}(-Z')}
\circ \big(- \otimes \sO_{\widehat{X}}(-\Lambda)\big)
\circ \hat{a}_{\widehat{X}}^*
\circ j_*
\colon \Db(W^\doubleplus) \to \widehat{\Ku}(X)
\end{equation*} 
is an equivalence.
\end{proof}

\bibliographystyle{amsalpha}
\bibliography{main}
\end{document}